\newcommand{\bx}{\mathbf x}
\newcommand{\bn}{\mathbf n}
\newcommand{\be}{\mathbf e}
\newcommand{\by}{\mathbf y}
\newcommand{\bz}{\mathbf z}
\newcommand{\R}{\mathbb R}
\newcommand{\bff}{\mathbf f}
\newcommand{\ve}{\varepsilon}
\DeclareMathOperator{\tr}{tr}
\let\phi=\varphi
\def\eps{\varepsilon}
\def\id{I} 
\def\reals{\mathord{\mathbb{R}}}
\def\abs#1{{\mathopen{\vert} #1 \mathclose{\vert}}}
\newcommand\Abs[2][DEFAULT]%
\def\norm#1{{\mathopen{\Vert} #1 \mathclose{\Vert}}}
\newcommand\Norm[2][DEFAULT]%
\newtheorem{theorem}{Theorem}[section]
\newtheorem{corollary}{Corollary}[section]
\newtheorem{statement}{Statement}[section]
\newtheorem{lemma}{Lemma}[section]
\newtheorem{assumption}{Assumption}[section]
\newtheorem{proposition}{Proposition}[section]
\theoremstyle{definition}
\newtheorem{definition}{Definition}[section]
\theoremstyle{remark}
\DeclareMathOperator{\sgn}{sgn}
\DeclareMathOperator*{\argmin}{arg\,min}
\title{Topological methods in analysis of periodic and chaotic
canard-type trajectories 
}
\author{
A.~V.~Pokrovskii \vspace{1mm} \\
\sl \small Department of Applied Mathematics, \\
\sl \small University College, Cork, Ireland \vspace{1mm} \\
\sl \small E-mail: a.pokrovskii@ucc.ie \vspace{2mm} \\
\and  A.~A.~Pokrovskiy \vspace{1mm}\\
\sl \small Department of Applied Mathematics, \\
\sl \small Girton College, Cambridge, UK \vspace{1mm} \\
\sl \small E-mail: ap468@hermes.cam.ac.uk \vspace{2mm} \\
\and A.~Zhezherun \vspace{1mm} \\
\sl \small Department of Civil and Environmental Engineering, \\
\sl \small University College, Cork, Ireland \vspace{1mm} \\
\sl \small E-mail: a.zhezherun@mars.ucc.ie \vspace{2mm} \\
}
\date{}
\begin{document}

\begin{center}
\begin{minipage}{12cm}
\maketitle
\end{minipage}
\end{center}


\section{Introduction\label{intro}}
This paper investigates the role of topological methods in the
analysis of canard-type periodic and chaotic trajectories. In
Sections \ref{intro} -- \ref{p:g} we apply topological degree
\cite{dei, geom} to the analysis of multi-dimensional canards. This
part of the paper was written mainly by the first and the last
authors. Sections \ref{twod} -- \ref{last} are devoted to an
application of a special corollary of the Poincar\'e--Bendixson
theorem to the existence of periodic two-dimensional canards. This
fragment was written mainly by the first and the second authors.


If $W \colon \reals^d \to \reals^d$ is a continuous mapping, $\Omega
\subset \reals^d$ is a bounded open set, and $y \in \reals^d$ does
not belong to the image $W(\partial \Omega)$ of the boundary
$\partial \Omega$ of $\Omega$, then the symbol $\deg(W, \Omega, y)$
denotes the \emph{topological degree} \cite{dei} of $W$ at $y$ with
respect to $\Omega$. If $0 \not\in W(\partial \Omega)$, then the
integer number $\gamma(W, \Omega) = \deg(W, \Omega, 0)$, called the
\emph{rotation of the vector field $W$ at $\partial \Omega$}, is
well defined. A detailed description of properties of the number
$\gamma(W, \Omega)$ can be found, for example, in \cite{geom}. In
particular, if $\id$ denotes the identity mapping, $\id(x) \equiv
x$, then the number $\gamma(\id - W, \Omega)$ measures the algebraic
number of fixed points of the mapping $W$ in $\Omega$.

Consider the slow-fast system
\begin{equation} \label{e:main}
{\arraycolsep=0pt \begin{array}{rl}
\dot x &{} = X(x, y,\eps) + 
\hat X(x, y, z, \eps), \\[3pt]
\eps\dot y &{} = Y(x, y, \eps) + 
\hat Y(x, y, z, \eps), \\[3pt]
\dot z &{} = Z(x, y, z, \eps).
\end{array} }
\end{equation}
Here
\[
x \in \reals^2, \quad y \in \reals^1, \quad z \in \reals^d,
\]
and $\eps>0$ is a small parameter. The terms $\hat X(x, y, z, \eps)$
and $\hat Y(x, y, z, \eps)$ are supposed to be small with respect to
the uniform norm:
\[
\sup\abs{\hat X(x, y, z, \eps)}, \sup\abs{\hat Y(x, y, z, \eps)}
\ll 1.
\]
{\em However, no estimates on derivatives of those functions are
assumed.}


The subset
\begin{equation} \label{mmE}
S = \left\{ (x, y, z) \in \reals^2 \times \reals^1 \times \reals^d
\colon Y(x, y, 0) = 0 \right\}
\end{equation}
of the phase space is called a \emph{slow surface} of the system
\eqref{e:main}: on this surface the derivative $\dot y$ of the
fast variable is zero, the small parameter $\eps$ vanishes,
and there are no disturbances $\hat X$ and $\hat Y$. 
The part of $S$ where
\begin{equation} \label{arE}
Y'_y(x, y, 0) < 0 \quad \bigl( > 0 \bigr)
\end{equation}
is called \emph{attractive} (\emph{repulsive}, respectively). The
subsurface $L \subset S$ which separates attractive and repulsive
parts of $S$ is called a \emph{turning subsurface}.

Trajectories which at first pass along, and close to, an attractive
part of $S$ and then continue for a while along the repulsive  part
of $S$ are called \emph{canards} or \emph{duck-trajectories}.

We apply topological decree to prove existence, and to locate
with a given accuracy periodic and chaotic canards of system
\eqref{e:main}. The canards which may be found in this way are
topologically robust: they vary only slightly if the right hand
side of the system is disturbed. This does not mean that the canard
trajectories are stable in Lyapunov sense. However, unstable
periodic canards are useful on their own. Whenever an (unstable)
periodic canard describes processes which are interesting, for instance,
from the technological point of view, this process can stabilized using
standard feedback control algorithms (note the role of the Pyragas control
in this area). Topologically robust chaotic canards also
have a role: their existence implies the existence of an infinite ensemble
of (unstable) periodic canards. General features of this ensemble
and methods of accurate localization of each of its members follow
from our constructions below. Thus, in this case one has a wide
choice of possible periodic modes in system \eqref{e:main},
each of which may be stabilized in the usual way.

\section{Periodic canards} \label{s:main1}

In this section we formulate the main existence result for
topologically stable canard-type periodic trajectories.

\begin{assumption} \label{a:smooth}
We suppose that the function $X$ and $Z$ are continuously
differentiable, and the function $Y$ is twice continuously
differentiable. The functions $\hat X$ and $\hat Y$ are continuous.
\end{assumption}

Emphasize again, that we do not require any smoothness of the
functions $\hat X$ and $\hat Y$. In particular, if there is no
variable $z,$ then we investigate existence of (periodic) canards of
the three dimensional system
\begin{equation} \label{e:main_red}
{\arraycolsep=0pt \begin{array}{rl} \dot x &{} = X(x, y,\eps) +
\hat X(x, y, \eps),
\\[3pt]
\eps\dot y &{} = Y(x, y, \eps) +
\hat Y(x, y, \eps)
\end{array} }
\end{equation}
Here the disturbances $\hat X(x, y, \eps)$ and $\hat Y(x,y,\eps)$
are small in the uniform norm, but there are no bounds on their
derivatives. Even in this three-dimensional situation applicability
of the standard toolboxes, which are based on asymptotical
representations of slow integral manifolds, is questionable.

Loosely speaking, we prove that {\em a periodic canard in disturbed
system \eqref{e:main} exists, providing existence of a periodic
canard in the undisturbed system}
\begin{equation} \label{e:main_0}
{\arraycolsep=0pt \begin{array}{rl}
\dot x &{} = X(x, y, \eps), \\[3pt]
 \eps\dot y &{} = Y(x, y, \eps).
\end{array} }
\end{equation}

A point $(x_c, y_c)$ is called a \emph{critical point} of the
system \eqref{e:main_0}, if it satisfies the equations
\begin{alignat}{1}
\langle X(x_c, y_c, 0), Y'_x(x_c, y_c, 0) \rangle &= 0, \label{3E} \\
Y(x_c, y_c, 0) &= 0 \label{4E}, \\
Y_y(x_c, y_c, 0) &= 0.  \label{5E}
\end{alignat}
This is a system of three equations with three variables, so in
general case it is expected to have solutions. The existence of
critical points is important for the phenomenon of canard type
solutions, because every canard, which first goes along the stable
slow integral manifold for $t < 0$ and then along the unstable slow
integral manifold for $t > 0$, must pass through a small vicinity of
a critical point. Thus, we turn our attention to the critical
points, and to the behavior of \eqref{e:main_0} in a vicinity of
such a point.

A critical point is called \emph{non-degenerate}, if the following
inequalities hold:
\begin{alignat}{1}
X(x_c, y_c, 0) &\not= 0, \label{nev1} \\
Y'_x(x_c, y_c, 0) &\not= 0, \label{nev2} \\
Y''_{yy}(x_c, y_c, 0) &\not= 0. \label{nev3}
\end{alignat}
We consider only non-degenerate critical points. Note that
non-degeneracy is stable with respect to small perturbations of
the right-hand side of \eqref{e:main_0}.

To study periodic and chaotic canards of the full system
\eqref{e:main}, we first consider canards passing through a small
vicinity of a critical point $(x_c, y_c)$ of the truncated system
\eqref{e:main_0}. Without loss of generality we assume that
the critical point is situated at the origin:
\begin{equation} \label{orE}
x_c = y_c = 0.
\end{equation}

Consider the auxiliary system
\begin{equation} \label{e:attr}
{\arraycolsep=0pt \begin{array}{rl}
\dot x &{} = X (x, y, 0), \\[3pt]
\langle (\dot x, \dot y), Y'_x(x, y) \rangle &{} = 0.
\end{array} }
\end{equation}
If the initial point $(x_0, y_0)$ lies on the slow surface
\begin{equation} \label{mm0E}
S_0 = \left\{ (x, y) \in \reals^{3} \colon Y(x, y, 0) = 0 \right\}
\end{equation}
of the system \eqref{e:main_0}, then \eqref{e:attr} is equivalent to
\begin{equation} \label{e:attr3}
{\arraycolsep=0pt \begin{array}{rl}
\dot x &{} = X (x, y, 0), \\[3pt]
(x, y) &{} \in S_0.
\end{array} }
\end{equation}
The system \eqref{e:attr3} is important because it describes
the singular limits of the solutions of \eqref{e:main_0} which
lie on the slow surface. Equations \eqref{e:attr} can also be
rewritten in the following form:
\begin{equation} \label{e:attr2}
{\arraycolsep=0pt \begin{array}{rl}
\dot x &{} = X(x, y, 0), \\[3pt]
\dot y Y_y(x, y, 0) &{} = -\langle X (x, y, 0), Y_x(x, y, 0) \rangle.
\end{array} }
\end{equation}
Due to \eqref{5E} this system has a singularity at the origin. Therefore,
the existence and uniqueness of a solution of \eqref{e:attr2} which
starts at the origin requires an additional assumption and will
be discussed in detail later.

To describe the dynamics of the system \eqref{e:main_0} near
the origin, we introduce a special coordinate system $(x^{(1)},
x^{(2)}, y)$ in the three-dimensional space of pairs $(x, y)$.
We choose $x^{(1)}$ to be co-directed with the gradient
$Y'((0, 0), 0, 0)$, and $x^{(2)}$ to be orthogonal to $x^{(1)}$ and $y$.
In the coordinate system $(x^{(1)}, x^{(2)}, y)$ the gradient of
$Y(x^{(1)}, x^{(2)}, y, 0)$ at the origin takes the form
\begin{equation} \label{grE}
Y'(0, 0, 0, 0) = (\xi, 0, 0), \quad \xi > 0,
\end{equation}
and the equation \eqref{e:main_0} takes the form
\begin{equation} \label{e:main_1}
{\arraycolsep = 0pt \begin{array}{rl}
\dot x^{(1)} &{} = X^{(1)}(x^{(1)}, x^{(2)}, y, 0), \\[3pt]
\dot x^{(2)} &{} = X^{(2)}(x^{(1)}, x^{(2)}, y, 0), \\[3pt]
\dot y &{} = Y(x^{(1)}, x^{(2)}, y, 0).
\end{array} }
\end{equation}
Equations \eqref{grE} and \eqref{3E} imply
\begin{equation} \label{e:x1at0}
X^{(1)}(0, 0, 0, 0) = 0.
\end{equation}
Taking into account the non-degeneracy of the origin, we can guarantee
the inequalities
\begin{equation} \label{geE}
X^{(2)}(0, 0, 0, 0) > 0, \qquad
Y''_{yy}(0, 0, 0, 0) = \zeta > 0,
\end{equation}
by changing, if necessary, the directions of the $x^{(2)}$ and $y$ axes.

The existence of canards and uniqueness of solutions of \eqref{e:attr2}
is guaranteed by the following assumption.
\begin{assumption} \label{ineA}
\begin{gather}
2 X^{(1)}_{x^{(2)}} (0, 0, 0, 0) Y''_{yy}(0, 0, 0, 0) -
X^{(1)}_{y}(0, 0, 0, 0) Y''_{x^{(2)}y}(0, 0, 0, 0) < 0, \label{olE} \\
X^{(1)}_{y}(0, 0, 0, 0) > 0. \label{ol1E}
\end{gather}
\end{assumption}

\begin{lemma} \label{existL}
There exist $T_{a} < 0 < T_{r}$ such that system \eqref{e:attr} with
the initial conditio
\[
x(0) = y(0) = 0,
\]
has the unique solution
\[
w^*(t) = (x^*(t), y^*(t)), \quad T_a < t < T_r,
\]
and the inequalities
\begin{alignat}{2}
Y_{y} (x^*(t), y^*(t)) &{}> 0,
&\quad 0 < {}& t < T_{r}, \label{e:l1gtz} \\
Y_{y} (x^*(t), y^*(t)) &{}< 0,
&\quad T_{a} < {}& t < 0 \label{e:l1ltz}
\end{alignat}
hold. In other words, the half-trajectory
$\left(x^{*}(t),y^{*}(t)\right)$, $T_a < t < 0$
lives on the attractive part of the slow surface \eqref{mm0E},
and the half-trajectory
$\left(x^{*}(t),y^{*}(t)\right)$, $0 < t < T_r$
lives on the repulsive part.
\end{lemma}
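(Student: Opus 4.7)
The plan is to reduce the singular three-dimensional problem to a regular planar dynamical system by restricting to the slow surface $S_0$ and then rescaling time so as to remove the singularity at the origin; the canard trajectory will appear as a branch of an invariant manifold of a hyperbolic saddle equilibrium.

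Since $Y_{x^{(1)}}(0,0,0,0)=\xi>0$ by \eqref{grE}, the implicit function theorem expresses $S_0$ locally near the origin as a smooth graph $x^{(1)}=\phi(x^{(2)},y)$ with $\phi(0,0)=0$, $\phi_{x^{(2)}}(0,0)=\phi_y(0,0)=0$ and $\phi_{yy}(0,0)=-\zeta/\xi<0$. In the chart $(x^{(2)},y)$ on $S_0$ the turning curve $L$ is the zero-set of $G(x^{(2)},y):=Y_y(\phi(x^{(2)},y),x^{(2)},y,0)$; since $G_y(0,0)=\zeta\neq0$, it is a smooth curve through the origin. Using \eqref{e:attr2}, the restricted planar system reads
\[
\dot x^{(2)}=X^{(2)}(\phi,x^{(2)},y,0),\qquad
\dot y=-\frac{X^{(1)}Y_{x^{(1)}}+X^{(2)}Y_{x^{(2)}}}{Y_y}\bigg|_{S_0},
\]
which is singular precisely on $L$. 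I would desingularize it by the time change $dt=Y_y\,d\tau$, obtaining the regular planar system
\[
\frac{dx^{(2)}}{d\tau}=Y_y\,X^{(2)},\qquad
\frac{dy}{d\tau}=-\bigl(X^{(1)}Y_{x^{(1)}}+X^{(2)}Y_{x^{(2)}}\bigr),
\]
all terms evaluated on $S_0$. The origin is an isolated equilibrium of this system by virtue of \eqref{e:x1at0} and $Y_{x^{(2)}}(0)=0$.

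The key step is the linearization at this equilibrium. Using \eqref{grE}, \eqref{geE}, \eqref{e:x1at0}, $Y_{x^{(2)}}(0)=0$ and the vanishing of the first derivatives of $\phi$ at the origin, one computes the Jacobian $J$ and finds
\[
\tr J=-\xi X^{(1)}_y(0),
\]
the two off-diagonal contributions to the trace cancelling by the symmetry $Y_{yx^{(2)}}=Y_{x^{(2)}y}$; by \eqref{ol1E} this is negative. The determinant $\det J$, after the analogous simplifications, reduces to an expression whose sign is forced to be negative by \eqref{olE}: the coefficient $2$ arises from combining the off-diagonal entries both involving $Y_{yx^{(2)}}$ with the second-order graph correction coming from $\phi_{yy}=-\zeta/\xi$. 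Hence $J$ has real eigenvalues $\lambda_s<0<\lambda_u$, and the origin is a hyperbolic saddle of the desingularized flow.

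The stable and unstable manifold theorem supplies a unique $C^{1}$ one-dimensional invariant curve through the origin in each eigen-direction; since each crosses $L$ transversally, $Y_y$ changes sign monotonically along it. I would then select the invariant curve on which the orientation fixed by \eqref{ol1E} carries the flow from the attractive side ($Y_y<0$) to the repulsive side ($Y_y>0$), and pull it back to real time through $dt=Y_y\,d\tau$. This produces a unique solution $w^*(t)=(x^*(t),y^*(t))$ of \eqref{e:attr} with $w^*(0)=0$, defined on an open interval $(T_a,T_r)\ni0$ whose endpoints are the finite values of $\int Y_y\,d\tau$ on the two half-branches. Inequalities \eqref{e:l1gtz}--\eqref{e:l1ltz} are then immediate. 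I expect the main obstacle to be the careful bookkeeping in the computation of $\det J$---verifying that exactly the combination in \eqref{olE} (with its coefficient $2$) is what forces $\det J<0$---together with the identification of the correct branch among the two invariant manifolds of the saddle.
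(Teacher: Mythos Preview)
Your route is genuinely different from the paper's. The paper does not desingularize by a time change, and it never invokes the stable/unstable manifold theorem. Instead it introduces curvilinear coordinates $(p,q,h)$ with the $q$-axis running along the turning line $L$; in those coordinates the two sheets of $S_0$ are $h=\mp\sqrt{-p\,D_{a,r}(p,q)}$, and substituting these into \eqref{e:attr} produces two continuous planar systems \eqref{e:attr-pq}, \eqref{e:repul-pq}, one per sheet. Existence is then just Peano, and uniqueness comes from a one-sided Lipschitz estimate: since $P_h(0)>0$ (the translation of \eqref{ol1E}), the derivative $\partial P_a/\partial p$ contains the term $P_h\cdot\partial h_a/\partial p$, which blows up to $+\infty$ as $p\to 0^{-}$, forcing $\partial P_a/\partial p$ to be bounded below and giving backward-in-$q$ uniqueness on the attractive sheet (and symmetrically forward uniqueness on the repulsive sheet). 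Your desingularization picture is the standard folded-singularity analysis from geometric singular perturbation theory and is conceptually cleaner; the paper's argument is more elementary and avoids any spectral computation.

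There is, however, a real gap in your determinant step. Linearizing your desingularized field at the origin in the chart $(x^{(2)},y)$, and using $\phi_{x^{(2)}}(0)=\phi_y(0)=0$, one finds
\[
(J)_{21}=-\xi\,X^{(1)}_{x^{(2)}}(0)-X^{(2)}(0)\,Y_{x^{(2)}x^{(2)}}(0),
\]
so $\det J$ depends on $Y_{x^{(2)}x^{(2)}}(0)$, which Assumption~\ref{ineA} does not constrain at all. In the paper's own example with $Y$ replaced by $x^{(1)}+y^2+x^{(2)}y+M\,(x^{(2)})^2$, conditions \eqref{olE}--\eqref{ol1E} are unaffected, yet $\det J=-4/3-2a+4M$ becomes positive for large $M$, and the origin of the desingularized flow is then a stable focus rather than a saddle; the invariant-manifold argument you outline collapses. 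The ``second-order graph correction from $\phi_{yy}$'' cannot rescue this: the Jacobian at an equilibrium involves only first derivatives of the composite functions, and since the first derivatives of $\phi$ vanish at the origin, $\phi_{yy}$ simply does not enter. To close the argument along your lines you would have to either impose an additional hypothesis controlling $Y_{x^{(2)}x^{(2)}}(0)$, or else abandon the saddle picture and argue uniqueness directly---which is exactly what the paper's one-sided Lipschitz estimate accomplishes using only \eqref{ol1E}.
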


Lemma \ref{existL} implies strict limitation on the possible
location of canards of system \eqref{e:main_0} that passing near the
origin: such canards should follow closely the solution $w^{*}(t)$
for a certain interval $t_a < 0 < t_r$.
%
The above argument shows that a periodic canard should have a segment of fast
motion from a small neighborhood of some point of the repulsive part of
$w^*(t)$ to a small neighborhood of the attractive part of $w^*(t)$;
this fast motion is, consequently, almost vertical (i.e., almost
parallel to the $y$ axis). More precisely, if there is a limit of
periodic canards as $\eps \to 0$, then the limiting closed
curve has necessarily a vertical segment connecting the repulsive
and attractive parts of $w^*(t)$. The next assumption ensures
a possibility of such vertical jumps.

\begin{assumption} \label{a:intersect}
The two-dimensional curves $\Gamma_a$, $\Gamma_r$ defined by
\[
\Gamma_a = \left\{ x^*(t) \colon T_a < t < 0 \right\}, \quad
\Gamma_r = \left\{ x^*(t) \colon 0 < t < T_r \right\}
\]
intersect, that is, there exist $\tau$ and $\sigma$ such that
\[
x^*(\tau) = x^*(\sigma) = x^*,
\]
with
\[
T_a < \tau < 0 < \sigma < T_r.
\]
Let, for example,
\[
y^*(\tau) < y^*(\sigma).
\]
Then we also require that
\[
Y(x^*, y) < 0, \quad y^*(\tau) < y < y^*(\sigma).
\]
To avoid cumbersome derivations, we additionaly require that the
curves $\Gamma_a$ and $\Gamma_r$ do not self-intersect.
\end{assumption}

\begin{assumption} \label{a:transv}
The intersection is transversal, that is the vectors
$\dot x^*(\tau) $ and $\dot x^*(\sigma) $ are linearly independent.
\end{assumption}

%

Now consider the equation
\begin{equation} \label{e:z-star}
\dot z = Z^*(t, z) = Z(x^*(t), y^*(t), z, 0),
\end{equation}
and denote by $S_{T}$ the shift operator along the solutions of
\eqref{e:z-star} by the time $T$.

The assumptions listed above are (probably?) known and they
guarantee existence of (periodic) canards for the system
\eqref{e:main_0}.

\begin{theorem} \label{t:main}
Let $D \subset \reals^{d}$ be an open bounded set, and let
\[
\gamma(\id - S_{\sigma - \tau}, D) \not= 0.
\]
Then there exist $\eps_0 > 0$ and $\lambda > 0$ such that for any
$\eps < \eps_0$ and any $\hat X, \hat Y$ satisfying
\begin{equation}\label{lambda}
\sup\abs{\hat X(x, y, z, \eps)}, \sup\abs{\hat Y(x, y, z, \eps)} < \lambda
\end{equation}
there exists a periodic solution of \eqref{e:main} that passes
through the set $(B_{\alpha}(x^*), B_{\alpha}(y^*), D)$, with
$\alpha$ going to zero as $\eps_{0},\delta$ go to zero. The minimal
period $T_{\min}$ of this solution approaches $\sigma - \tau$ as
$\eps_{0},\delta \to 0$.
\end{theorem}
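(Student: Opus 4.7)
The plan is to set up a Poincaré return map on a cross-section transversal to the ideal periodic canard, and to locate a fixed point of this map via rotation of vector fields, converting the assumption $\gamma(\id - S_{\sigma - \tau}, D) \neq 0$ into a nonzero rotation for the return map of the perturbed system \eqref{e:main}. The ideal cycle is the concatenation of the slow segment $w^*(t)$, $\tau \le t \le \sigma$ (provided by Lemma~\ref{existL} and Assumption~\ref{a:intersect}), with the fast near-vertical descent from $(x^*, y^*(\sigma))$ to $(x^*, y^*(\tau))$ which is forced by the inequality $Y(x^*, y) < 0$ on the interval $y^*(\tau) < y < y^*(\sigma)$. Using Assumption~\ref{a:transv}, I would choose a codimension-one hyperplane $\Sigma \subset \reals^2 \times \reals^1$ through $(x^*, y^*(\tau))$, transversal both to $\dot x^*(\tau)$ and to the $y$-axis; then $\Sigma \times \reals^d$ is a transversal section of the ideal cycle in the full phase space.

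For $\eps$ and $\lambda$ small, the return map $P = P_{\eps, \hat X, \hat Y}$ is defined on a neighborhood of $(x^*, y^*(\tau), z_0)$ by following one full loop of \eqref{e:main}. Using coordinates $(w, z)$ on $\Sigma \times \reals^d$, with $w$ transverse to the canard in $(x, y)$-space, I would show that in the singular limit $\eps \to 0$, $\lambda \to 0$ the limiting map $P_0$ has an (essentially) product structure. The $w$-component of $P_0$ collapses a neighborhood of $w = 0$ onto $w = 0$: the attractive half $\Gamma_a$ of $w^*(t)$ pulls nearby trajectories exponentially toward $w^*$, and the vertical fast jump lands again on $\Sigma$ near the same point. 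The $z$-component of $P_0$ coincides with the shift $S_{\sigma - \tau}$ of \eqref{e:z-star}, since on the canard $z$ satisfies $\dot z = Z(x^*(t), y^*(t), z, 0)$ for time $\sigma - \tau$, while the drift of $z$ along the fast jump of duration $O(\eps)$ is only $O(\eps)$.

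The topological conclusion is then standard. On a product neighborhood $U \times D$, where $U$ is a small ball around $w = 0$ in $\Sigma$, the product formula for rotation gives
\[
\gamma(\id - P_0, U \times D) = \gamma(\id - P_0^w, U) \cdot \gamma(\id - S_{\sigma - \tau}, D) \neq 0,
\]
the first factor being $1$ because $P_0^w$ is a strict contraction fixing $w = 0$, and the second being nonzero by hypothesis. For $\eps < \eps_0$ and for $\hat X, \hat Y$ satisfying \eqref{lambda} with $\lambda$ small enough, $P_{\eps, \hat X, \hat Y}$ is uniformly close to $P_0$ on $\partial(U \times D)$, so by homotopy invariance of rotation the rotation $\gamma(\id - P_{\eps, \hat X, \hat Y}, U \times D)$ equals the same nonzero integer. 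This yields a fixed point of $P$, i.e.\ a periodic solution of \eqref{e:main}; by construction its tube width $\alpha$ and its return-time deviation from $\sigma - \tau$ both tend to $0$ as $\eps_0, \lambda \to 0$, which is exactly the claim.

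The hard part is the construction of $P$ itself: one must show that $P_{\eps, \hat X, \hat Y}$ is well-defined and continuous on $\partial(U \times D)$ and converges to $P_0$ in the limit. Following the canard through the critical point at the origin and along the repulsive half $\Gamma_r$ is the delicate regime where trajectories generically fly off vertically, and because only uniform smallness of $\hat X, \hat Y$ is assumed — with no derivative bounds — the classical slow-manifold/Fenichel toolbox is unavailable. I expect this step to be carried out via the topological (guiding-vector-field) constructions developed in Sections~\ref{intro}--\ref{p:g} of the paper: these produce a closed forward-invariant tube of width $\alpha(\eps, \lambda) \to 0$ around $w^*$ on $[\tau, \sigma]$, and the fast jump is then trapped by a direct barrier argument based on $Y(x^*, y) < 0$ from Assumption~\ref{a:intersect}.
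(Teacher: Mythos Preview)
Your overall architecture (section $\times$ degree $\times$ product formula $\times$ homotopy) matches the paper's, but the step where you assert that the $w$-component of the limiting return map is a ``strict contraction fixing $w=0$'' is wrong, and this is precisely the heart of the matter. The canard passes along the \emph{repulsive} branch $\Gamma_r$ for the entire interval $(0,\sigma)$; nearby trajectories are expelled, not attracted. In the paper's local coordinates $(u,v)$ on the parallelogram $\Pi(\alpha)$, initial conditions with $v$ bounded away from $0$ are either ``destabilizing'' (the orbit blows up vertically near the origin and never returns to a neighbourhood of the section) or ``stabilizing'' (it falls back to the attractive sheet far too early). Only the $\eps$-thin strip $\{|v|\ll 1\}$ produces genuine returns. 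Consequently the honest Poincar\'e map $P$ is \emph{not defined} on a full neighbourhood $U$ of $w=0$, and there is no forward-invariant tube of width $\alpha(\eps,\lambda)\to 0$ around $w^*$ on $[\tau,\sigma]$---that tube exists only on the attractive part $[\tau,0]$.

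The paper handles this by replacing the Poincar\'e map with an artificially extended continuous map $W_\eps\colon\overline{\Pi}(\alpha)\to\reals^2$. A carefully case-split ``return time'' $s_\eps(p_0,q_0)$ is defined so that on the sides $R_\pm$ of $\Pi$ (where $v=\pm\alpha/2$) the map $W_\eps$ takes the constant values $w^*(\sigma\pm 2\alpha)$, outside $\Pi$; a convex-combination patch (Case~2 of the definition) glues this to the genuine return where it exists. One then checks separately that (i) $W_\eps$ is continuous on all of $\overline{\Pi}$, (ii) any fixed point of $W_\eps$ in $\Pi$ is actually a fixed point of the true Poincar\'e map (Lemma~\ref{l:fixed-point}), and (iii) on $\partial\Pi$ the field $\id-W_\eps$ is homotopic to the linear field $(u,5v)$, so $\gamma(\id-W_\eps,\Pi)=\operatorname{sgn}A\neq 0$. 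Only after this is the product with the $z$-direction taken. So the rotation in the $(p,q)$-factor is not coming from a contraction at all; it comes from a saddle-type structure (contraction in $u$, expansion in $v$) encoded through the auxiliary map $W_\eps$. Your plan would go through once you replace ``$P_0^w$ is a contraction'' by this construction.
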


\section{Example}
Consider the system
\begin{equation*}
{\arraycolsep=0pt \begin{array}{rl}
\dot x^{(1)} &{} = - a x^{(2)} + y / 3, \\[3pt]
\dot x^{(2)} &{} = x^{(1)} + 1, \\[3pt]
\eps \dot y &{} = x^{(1)} + y^2 + x^{(2)} y.
\end{array} }
\end{equation*}
The curves $\Gamma_a$ and $\Gamma_r$ intersect transversally on the
plane $(x^{(1)}, x^{(2)})$, see Fig.\ \ref{fig:sample-2d}.

\begin{figure}[htb]
\begin{center}
\includegraphics*[width=6cm]{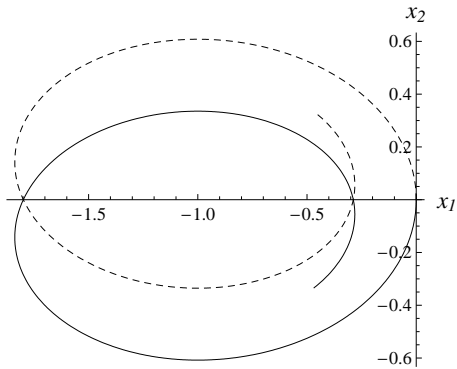}
\end{center}
\caption{Curves $\Gamma_a$ (solid) and $\Gamma_r$ (dashed) for $a = 3$.}
\label{fig:sample-2d}
\end{figure}

This system has a periodic canard. Figure \ref{fig:sample-2d} graphs
the numerical approximation of this canard, together with the
limiting curve, which consists of $\Gamma_a$, $\Gamma_r$, and a
vertical segment connecting them.

\begin{figure}[htb]
\begin{center}
\includegraphics*[width=6cm]{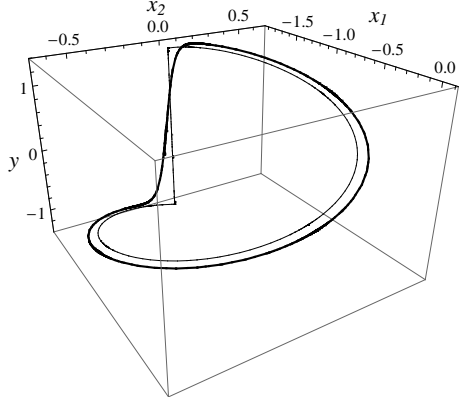}
\end{center}
\caption{Periodic canard for $a = 3$ with $\eps = 0.1$ (thick) and the limiting curve.}
\label{fig:sample-canard}
\end{figure}

According to Theorem \ref{t:main}, {\em the perturbed system
\begin{equation*}
{\arraycolsep=0pt \begin{array}{rl}
\dot x^{(1)} &{} = - a x^{(2)} + y / 3 + \hat X^{(1)}(x, y, \eps) , \\[3pt]
\dot x^{(2)} &{} = x^{(1)} + 1 +\hat X^{(2)}(x, y, \eps), \\[3pt]
\eps \dot y &{} = x^{(1)} + y^2 + x^{(2)} y + \hat Y(x, y, \eps).
\end{array} }
\end{equation*}
has for a small $\eps$ a periodic canard for any $\hat X^{(1)}(x, y,
\eps)$,
 $\hat X^{(2)}(x, y, \eps)$, $\hat Y(x, y, \eps),$
 which are sufficiently small in the uniform norm.}

\section{Chaotic canards} \label{s:main2}

In this section we study the chaotic behavior of canard-type trajectories
of \eqref{e:main}. The method that is used to prove chaoticity
combines the scheme suggested by P.~Zgliczy\'nski \cite{Zgli} with the
method of topological shadowing \cite{Shadowing} and uses the results
obtained in \cite{Homoclinic}. We specifically note that our results
require no computer assisted proofs, in contrast to typical application
of the aforementioned scheme, see \cite{Cox,Zgli2}. For another approach
in studying of chaos in singularly perturbed systems, see \cite{Deng} and
bibliography therein.

\subsection{Definition of chaos}\label{s:main21}

Important attributes of chaotic behavior of a mapping
$f \colon \reals^d \to \reals^d$ include sensitive dependence
on initial conditions, an abundance of periodic trajectories and an
irregular mixing effect describable informally by the existence of a
finite number of disjoint sets which can be visited by trajectories
of $f$ in any prescribed order.

Let ${\mathcal U} = \left\{ U_1, \ldots, U_m \right \}$, $m > 1$,
be a family of disjoint subsets of $\reals^d$ and let us denote
the set of one-sided sequences $\omega = \omega_0, \omega_1, \ldots$
by $\Omega^R_m$. Sequences in $\Omega^R_m$ will be used to prescribe
the order in which sets $U_i$ are to be visited.
For $x \in \bigcup_{i = 1}^m U_i$ we define $I(x)$ to be the number
$i$ satisfying $x \in U_i$.

\begin{definition} \label{def:chaos}

A mapping $f$ is called \emph{${\mathcal U}$-chaotic}, if there exists
a compact $f$-invariant set $S \subset\bigcup_{i} U_{i}$ with
the following properties:

\begin{itemize}
\item[(p1)]
for any $\omega \in \Omega^R_m$ there exists $x \in S$ such that
$f^i (x) \in U_{\omega_i}$ for $i \ge 1$;

\item[(p2)]
for any $p$-periodic sequence $\omega \in \Omega^R_m$
there exists a $p$-periodic point $x \in S$ with
$f^i (x) \in U_{\omega_{i}}$;

\item[(p3)]
for each $\eta > 0$ there exists an uncountable subset $S(\eta)$
of $S$, such that the simultaneous relationships
\[
\limsup_{i \to \infty} \Abs{I(f^i(x)) - I(f^i(y))} \ge 1, \quad
\liminf_{i \to \infty} \Abs{f^i(x) - f^i(y)} < \eta
\]
hold for all $x, y \in S(\eta)$, $x \not= y$.
\end{itemize}

\end{definition}

This definition is a special case of $(\mathcal U, k)$-chaoticity
from \cite{Homoclinic} with $k = 1$.

The above defining properties of chaotic behaviour are similar
to those in the Smale transverse homoclinic trajectory theorem
with an important difference being that we do not require the
existence of an invariant Cantor set. Instead, the definition
includes property (p2), which is usually a corollary of uniqueness,
and (p3), which is a form of sensitivity and irregular mixing as in
the Li--Yorke definition of chaos, with the subset $S(\eta)$
corresponding to the Li--Yorke scrambled subset $S_{0}$.

\subsection{Chaotic behavior} \label{s:main22}
Let us now change Assumption \ref{a:intersect} to the
following stronger assumption, which ensures the existence
of multiple intersections between the curves $\Gamma_a$ and $\Gamma_r$.

\begin{assumption} \label{a:intersect-k}
Let the trajectories $\Gamma_a$ and $\Gamma_r$ intersect
$K \ge 2$ times, that is, there exist
$\tau_i$ and $\sigma_i$, $i = 1, \ldots, K$, such that
\[
x^*(\tau_i) = x^*(\sigma_i) = x^*_i,
\]
with
\[
T_a < \tau_i < 0 < \sigma_i < T_r,
\]
and $\tau_i \not= \tau_j$ for $i \not= j$.
We also require that the curves $\Gamma_a$ and $\Gamma_r$
do not self-intersect, and that
\[
Y(x^*_i, y) < 0, \quad y \in [y^*(\tau_i), y^*(\sigma_i)],
\quad i = 1, \ldots, K.
\]
\end{assumption}

We also change Assumption \ref{a:transv} to the following:

\begin{assumption} \label{a:transv-k}
All the intersections are transversal, $i = 1, \ldots, K$.
\end{assumption}

\begin{theorem} \label{t:chaos-k}
Let $D_i \subset \reals^{d}$ be open, convex and bounded,
and let
\[
S_{\sigma_{j} - \tau_{i}} \bar{D_{i}} \subset D_{j}, \quad i, j = 1, \ldots, K.
\]
Then there exist disjoint sets $\Pi_i \ni x^*(\tau_i)$, $\eps_{0}>0$
and $\lambda > 0$ such that for any $\eps<\eps_{0}$ and any $\hat X,
\hat Y$ satisfying
$$
\sup\abs{\hat X(x, y, z, \eps)},\sup\abs{\hat Y(x, y, z, \eps)} <
\lambda
$$
the appropriately defined Poincar\'e map ${\mathcal P} \colon
\bigcup_i \Pi_i \times D_i \to \reals^2$ of system \eqref{e:main} is
$\{ \Pi_i \times D_i \}$-chaotic.
\end{theorem}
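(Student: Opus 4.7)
The plan is to recast the statement as a chaotic covering-relation problem in the Zgliczy\'nski framework, and then to prove robustness of those coverings under $C^0$-small perturbations by a topological-degree argument in the spirit of \cite{Homoclinic, Shadowing}.

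First, I would construct small cross-sections $\Sigma_i$ transverse to the attractive part of the slow surface at each intersection point $x^*(\tau_i)$, and, inside each, a two-dimensional box $\Pi_i \ni x^*(\tau_i)$. For the unperturbed system \eqref{e:main_0}, a trajectory starting in $\Pi_i \times D_i$ must track $w^*(t)$ past the critical point at $t = 0$ and then follow the repulsive branch $\Gamma_r$, breaking away near one of the crossings $x^*_j$ by a near-vertical fast jump; the hypothesis $Y(x^*_j, y) < 0$ along the relevant interval of $y$ (Assumption \ref{a:intersect-k}) guarantees that this fast jump is performed deterministically and lands near $x^*(\tau_j) \in \Pi_j$. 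The essential mechanism — already present in Theorem \ref{t:main} — is that the escape time depends monotonically on one transverse coordinate in $\Pi_i$, so $\Pi_i$ can be decomposed into $K$ disjoint ``escape strips'' $\Pi_i^{(j)}$ on which the trajectory leaves $\Gamma_r$ at time approximately $\sigma_j$ and returns to $\Sigma_j$ near $x^*(\tau_j)$.

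Second, I would verify the covering relations for every pair $(i,j) \in \{1,\dots,K\}^2$: the Poincar\'e map carries $\Pi_i^{(j)} \times D_i$ across $\Pi_j \times D_j$ in the Zgliczy\'nski sense. On the $(x,y)$-factor, transversality (Assumption \ref{a:transv-k}) together with the absence of self-intersections of $\Gamma_a, \Gamma_r$ gives a horseshoe-type picture: the image of $\Pi_i^{(j)}$ crosses $\Pi_j$ in one direction and is compressed in the other. On the $z$-factor, the strict inclusion $S_{\sigma_j - \tau_i}\bar{D_i} \subset D_j$, combined with convexity of each $D_k$ and continuous dependence of the shift operator on the perturbation, ensures that the time-$(\sigma_j - \tau_i)$ map along the perturbed $z$-equation still sends $\bar D_i$ into $D_j$. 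Taken together these yield a nonzero rotation $\gamma(\id - \mathcal P_{ij}, \Pi_i^{(j)} \times D_i)$ of the associated fixed-point map on each itinerary cell.

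Third, I would appeal to the $(\mathcal U, 1)$-chaoticity criterion of \cite{Homoclinic}. Nonzero degree of each of the $K^2$ covering maps is preserved under any sufficiently small $C^0$ perturbation of the right-hand side, which is precisely what makes the argument compatible with Assumption \ref{a:smooth}: no derivative bounds on $\hat X$ or $\hat Y$ are required. Property (p2) then follows from the existence of fixed points of suitable iterates of the perturbed Poincar\'e map inside each periodically prescribed itinerary (Brouwer/degree); (p1) follows by a standard diagonal/compactness argument across all admissible one-sided itineraries; and (p3) is produced by the topological shadowing mechanism of \cite{Shadowing}, which yields uncountably many orbits sharing a prescribed itinerary while differing in short-time behaviour.

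The main obstacle, in my view, is Step 2: controlling the escape time from the repulsive branch finely enough to isolate the $K$ strips $\Pi_i^{(j)}$ together with all the transversality data needed to evaluate the degree. Classical canard analyses accomplish this via Fenichel theory and asymptotic expansions of the slow manifolds, both of which are unavailable here since $\hat X, \hat Y$ are only continuous; the substitute must be the rotation-of-vector-field argument used for Theorem \ref{t:main}, generalised from a single return to $K^2$ simultaneously verified inter-section returns.
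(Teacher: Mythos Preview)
Your overall architecture is right and matches the paper: reduce to the $(V,W)$-hyperbolicity criterion of \cite{Homoclinic} (their Corollary~3.1, quoted in the paper as Theorem~\ref{t:chaos}), verify the $K^2$ covering conditions by generalising the rotation argument from Theorem~\ref{t:main}, and handle the $z$-factor through the strict inclusion $S_{\sigma_j-\tau_i}\bar D_i\subset D_j$ plus convexity. The robustness under merely $C^0$-small $\hat X,\hat Y$ then follows for free from degree stability, exactly as you say.

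Where you diverge from the paper is in Step~2. You propose to carve each $\Pi_i$ into $K$ ``escape strips'' $\Pi_i^{(j)}$ according to where the trajectory leaves $\Gamma_r$, invoking monotone dependence of the escape time on a transverse coordinate. The paper does \emph{not} do this, and for a good reason: monotonicity of the escape time is neither proved nor needed, and in the non-smooth perturbed setting it may simply fail. Instead, the paper builds a single continuous map $W^K_\eps$ on all of $\overline\Pi_i$ by artificially extending the Poincar\'e map: one defines a continuous ``return time'' $s^K_\eps(p_0,q_0)$ (with several cases, interpolating linearly where the true return is undefined) and then a convex-combination formula so that $W^K_\eps$ agrees with the Poincar\'e map whenever the latter lands in some $\overline\Pi_j$ (Lemma~\ref{l:pmap-defined}). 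The boundary behaviour from Propositions~\ref{p:stable}--\ref{p:s-u-v} then gives $(\Pi_*,\Pi_*)$-hyperbolicity of each $h_j^{-1}W^K_\eps h_i$ directly (Lemma~\ref{l:hyperbolic}): the image in the $v$-direction stretches from $w^*(\sigma_1-2\alpha)$ to $w^*(\sigma_K+2\alpha)$, hence across every $\Pi_j$, while the $u$-coordinate of the image is $\eps$-small. No strip decomposition is required because $(V,W)$-hyperbolicity is a statement about the map on the whole of $V$, checked only at $\partial V^{(u)}\times\overline V^{(s)}$.

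So your ``main obstacle'' is real for the route you chose, but it dissolves if you switch from identifying preimage strips to constructing the extended map $W^K_\eps$ and verifying hyperbolicity on the full $\Pi_i$. That is the key technical device you are missing.
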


Let $S \subset \bigcup_i \Pi_i \times D_i$, $i = 1, \ldots, K$, be
the compact ${\mathcal P}$-invariant set from Definition
\ref{def:chaos}; its existence is guaranteed by Theorem
\ref{t:chaos-k}. Denote by ${\mathcal E}_S$ the topological entropy
of the Poincar\'e map ${\mathcal P}$ with respect to the compact set
$S$, see \cite{Katok}, p. 109.

\begin{corollary}
Under conditions of Theorem \ref{t:chaos-k}, for sufficiently small
$\eps$ the following inequality holds:
\[
{\mathcal E}_S \ge \log K.
\]
\end{corollary}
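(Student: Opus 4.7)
The plan is to exhibit a continuous factor map from $(S,\mathcal{P}|_{S})$ onto the full one-sided shift $(\Omega^{R}_{K},\sigma)$ on $K$ symbols. The corollary then follows from two standard facts: the topological entropy of that shift is $\log K$, and topological entropy is non-decreasing under continuous factor maps between compact dynamical systems (see, e.g., Walters, \emph{An Introduction to Ergodic Theory}, Theorem 7.2, or Katok--Hasselblatt).

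Concretely, set $U_{i}=\Pi_{i}\times D_{i}$ and define a symbol function $I$ on $\bigcup_{i}U_{i}$ by $I(y)=j$ iff $y\in U_{j}$; then introduce the itinerary map $\pi\colon S\to\Omega^{R}_{K}$ via $\pi(x)_{i}=I(\mathcal{P}^{\,i+1}(x))$ for $i\ge 0$. The sets $U_{i}$ are pairwise disjoint by Theorem \ref{t:chaos-k}, and can be taken open (this is the one point that must be read out of the construction of the $\Pi_{i}$ in the proof of that theorem); consequently $I$ is locally constant, hence continuous, on $\bigcup_{i}U_{i}\supset S$, and $\pi$ is continuous. A direct verification gives $\pi\circ\mathcal{P}=\sigma\circ\pi$, and surjectivity of $\pi$ is essentially the content of property (p1) in Definition \ref{def:chaos}: given $\omega\in\Omega^{R}_{K}$, apply (p1) to the sequence obtained by prepending an arbitrary symbol to $\omega$, producing some $x'\in S$; then $x=\mathcal{P}(x')\in S$ (by $\mathcal{P}$-invariance of $S$) satisfies $\pi(x)=\omega$. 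Since $S$ is compact, $\pi$ exhibits $(\Omega^{R}_{K},\sigma)$ as a topological factor of $(S,\mathcal{P}|_{S})$.

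Applying the factor-map entropy inequality then yields $\mathcal{E}_{S}=h_{\mathrm{top}}(\mathcal{P}|_{S})\ge h_{\mathrm{top}}(\sigma|_{\Omega^{R}_{K}})=\log K$, as required. The only step worth flagging as an obstacle is the continuity of $I$ on $S$: this forces the sets $\Pi_{i}$ produced by the proof of Theorem \ref{t:chaos-k} to be open (or at least to have pairwise disjoint closures inside the Poincar\'e section). Given the horseshoe-like, topological-degree-based covering structure that drives that theorem, such $\Pi_{i}$ are available essentially for free, after which every remaining step is either an immediate consequence of $\mathcal{U}$-chaoticity or a textbook fact about topological entropy.
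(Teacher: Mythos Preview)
Your argument is correct and coincides with the paper's: the paper simply invokes Proposition~2.1 of \cite{Homoclinic}, which packages exactly the semiconjugacy-onto-the-full-shift argument you have written out (property~(p1) supplies surjectivity of the itinerary map, and the factor-map inequality for topological entropy then gives $\mathcal{E}_S \ge \log K$). Your caveat about openness of the $\Pi_i$ is well placed and is indeed satisfied by the construction in the proof of Theorem~\ref{t:chaos-k}, where the $\Pi_i$ are open ``parallelograms'' in local $(u_i,v_i)$ coordinates.
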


This corollary follows from the $\{ \Pi_i \times D_i \}$-chaoticity
of ${\mathcal P}$ and the definition of topological entropy, see
Proposition 2.1 from \cite{Homoclinic}.


\section{Proofs} \label{p:g}

\subsection{Proof of Lemma \ref{existL}}
The equation \eqref{e:attr} has a singularity at the origin,
as demonstrated by \eqref{e:attr2}. Thus, we only need to show
the existence and uniqueness of the solution at the origin.
To do this, we eliminate the third equation from \eqref{e:attr} by
rewriting it in a specially selected curvilinear coordinate system
in a vicinity of the origin, and by proving the one-sided Lipschitz
conditions for the transformed system.

The curvilinear coordinate system $(p, q, h)$ is introduced in
the following way: we keep the $x^{(1)}$ and $y$ axes from the
$(x^{(1)}, x^{(2)}, y)$ coordinate system, so that they
become the $p$ and $r$ axes correspondingly, and direct the
$q$ axis along the turning line $L$ on the slow surface $S_0$.
In the new coordinates system \eqref{e:main_0} takes the form
\begin{equation}
{\arraycolsep=0pt \begin{array}{rl}
\dot p &{}= P(p, q, h, \eps), \\[3pt]
\dot q &{}= Q(p, q, h, \eps), \\[3pt]
\eps \dot h &{}= H(p, q, h, \eps).
\end{array} }
\end{equation}
This representation is valid in a sufficiently small vicinity of
the origin, which we denote by
$\Omega_0 = \{ (p, q, h) : \abs{p}, \abs{q}, \abs{h} < \delta_0 \}$.
Recall that the turning line of the surface $S_0$ is
described by equations
\begin{equation*}
{\arraycolsep=0pt \begin{array}{rl}
Y(x_1, x_2, y, 0) = 0, \\[3pt]
Y'_y(x_1, x_2, y, 0) = 0.
\end{array} }
\end{equation*}
Due to the choice of $q$, these equations take the form
\[
p = 0, \quad h = 0
\]
in the $(p, q, h)$ coordinates, implying
\begin{equation} \label{koordE}
H(0, q, 0, 0) = H'_h(0, q, 0, 0) = 0
\end{equation}
in a sufficiently small vicinity of the origin.
Let us now calculate the tangent to the turning line at the origin.
Due to \eqref{grE} and \eqref{geE}, $Y(x_1, x_2, y, 0)$ admits
the following representation at zero:
\[
Y(x_1, x_2, y, 0) = \xi x_1 + \zeta y^2 / 2 + \phi y x_2 + a x_2^2
+ b x_1^2 + c x_1 x_2 + d x_1 y + o(\norm{(x_1, x_2, y)}^2),
\]
where $\xi = Y'_{x^{(1)}}(0) > 0$, $\zeta = Y''_{yy}(0) > 0$,
$\phi = Y''_{x^{(2)} y}(0)$. Thus, the equations of the turning line
can be approximated at zero as
\begin{equation*}
{\arraycolsep=0pt \begin{array}{rl}
\xi x_1 + \zeta y^2 / 2 + \phi y x_2 + a x_2^2 + b x_1^2 + c x_1 x_2 + d x_1 y = 0, \\[3pt]
\zeta y + \phi x_2 + d x_1 = 0.
\end{array} }
\end{equation*}
From the first equation we have that $x_1$ has an order of square, thus
it can be eliminated from the second equation. Therefore, the tangent
vector at the origin is
\begin{equation} \label{e:pqr-tangent}
(0, \zeta, -\phi).
\end{equation}
Inequalities \eqref{geE} imply that this vector, and therefore the
$q$ axis, forms a sharp angle with the the vector
$\dot w^*(0) = (0, X^{(2)}(0, 0, 0, 0), 0)$, thus
\eqref{e:x1at0} and \eqref{geE} become
\begin{equation} \label{e:pqr-cond}
P(0, 0, 0, 0) = 0, \qquad Q(0, 0, 0, 0) > 0, \qquad H''_{hh}(0, 0, 0, 0) > 0,
\end{equation}
and the Assumption \ref{ineA} translates into
\begin{equation} \label{e:pqr-a1}
P'_q(0, 0, 0, 0) < 0, \qquad P'_h(0, 0, 0, 0) > 0.
\end{equation}
In the new coordinate system $(p, q, h)$ the
The attractive part $S_a$ of $S_0$ satisfies the equation
\begin{equation} \label{mma1}
h = - \sqrt{-p D_{a}(p, q)}, \quad p < 0,
\end{equation}
and the repulsive part $S_r$ satisfies the equation
\begin{equation} \label{mmr1}
h = \sqrt{-p D_{r}(p, q)}, \quad p < 0,
\end{equation}
where $D_{a}(p, q),$ $D_{r}(p, q),$ are smooth functions satisfying
$D_{a}(0, 0), D_{r}(0, 0)> 0$.

 Substituting $r$ with \eqref{mma1} and \eqref{mmr1} into
\eqref{e:attr}, we obtain a system of two equations describing the
half-trajectory of \eqref{e:attr} that lives on the attractive part
of the slow surface:
\begin{equation} \label{e:attr-pq}
{\arraycolsep=0pt \begin{array}{rl}
\dot p &{} = P_a(p, q) = P(p, q, -\sqrt{-p D_{a}(p, q)}, 0), \\[3pt]
\dot q &{} = Q_a(p, q) = Q(p, q, -\sqrt{-p D_{a}(p, q)}, 0),
\end{array} }
\end{equation}
and a set of equations describing the half-trajectory of
\eqref{e:attr} that lives on the repulsive part of the slow
surface:
\begin{equation} \label{e:repul-pq}
{\arraycolsep=0pt \begin{array}{rl}
\dot p &{} = P_r(p, q) = P(p, q, \sqrt{-p D_{r}(p, q)}, 0), \\[3pt]
\dot q &{} = Q_r(p, q) = Q(p, q, \sqrt{-p D_{r}(p, q)}, 0),
\end{array} }
\end{equation}
with the initial condition $p(0) = q(0) = 0$ being the same for
both \eqref{e:attr-pq} and \eqref{e:repul-pq}. The existence of
solutions of \eqref{e:attr-pq} and \eqref{e:repul-pq} follows
from the continuity of the right-hand side. To establish
the uniqueness of solution of \eqref{e:attr-pq} in negative time,
we divide the first equation from \eqref{e:attr-pq} by the second,
obtaining
\[
\frac{d p}{d q} = P_a(p, q) / Q_a(p, q).
\]
Then, taking into account that $Q_a(0, 0) > 0$, we prove the one-sided
Lipschitz condition for $P_a(p, q)$ in the $p$ variable:
\[
(P_a (p_1, q) - P_a (p_2, q))(p_1 - p_2) \ge - L_a (p_1 - p_2)^2,
\]
where $-\eps_L \le p_1, p_2 \le 0$, $\abs{q} \le \eps_L$, and $L_a
\ge 0$. This follows from the elementary estimate
\[
\frac{\partial}{\partial p} P_a(p, q) > L,
\]
which holds in a small vicinity of zero for an appropriate $L<0$.
This proves the uniqueness of the solution $p(q)$, and uniqueness of
$p(t)$ and $q(t)$ follows.

Uniqueness of the solution of \eqref{e:repul-pq} in positive time
is proved in the same way.

\subsection{Proof of Theorem \ref{t:main}}

Consider the coordinate system $(p, q, h)$ introduced in a vicinity
of zero in Lemma \ref{existL}. We extend this system to the whole
space $\reals^3$ by aligning the $q$ axis along its tangent
vector at zero \eqref{e:pqr-tangent} outside of this vicinity,
and connecting to the curvilinear part in a differentiable way.
Thus, we get a global almost linear coordinate change, coinciding
with the curvilinear change at the origin.

From this moment, we will be working with this new coordinate
system, so \eqref{e:main_0} takes the form
\begin{equation} \label{e:main-pq}
{\arraycolsep=0pt \begin{array}{rl}
\dot p &{}= P(p, q, h, \eps), \\[3pt]
\dot q &{}= Q(p, q, h, \eps), \\[3pt]
\eps \dot h &{}= H(p, q, h, \eps),
\end{array} }
\end{equation}
and \eqref{e:attr} takes the form
\begin{equation} \label{e:attr3-pq}
{\arraycolsep=0pt \begin{array}{rl}
\dot p &{} = P (p, q, h, 0), \\[3pt]
\dot q &{} = Q (p, q, h, 0), \\[3pt]
(p, q, h) &{} \in S_0.
\end{array} }
\end{equation}
Because the $y$ axis becomes the $h$ axis in the new coordinates,
the relationships \eqref{e:l1gtz} and \eqref{e:l1ltz} from Lemma \ref{existL}
also hold for \eqref{e:main-pq}:
\begin{alignat*}{2}
H_{h} (w^*(t)) &{}> 0, &\quad 0 < {}& t < T_{r}, \\
H_{h} (w^*(t)) &{}< 0, &\quad T_{a} < {}& t < 0.
\end{alignat*}
Hence, the attractive and repulsive slow surfaces
allow the following representation in a certain
vicinities of the curves $\Gamma_a$ and $\Gamma_r$
correspondingly:
\begin{equation} \label{e:ha-hr}
h = h_a(p, q), \qquad h = h_r(p, q),
\end{equation}
where the functions $h_a$ and $h_r$ are smooth.
Denote this vicinities of $\Gamma_a$ and $\Gamma_r$
by $\Omega_a$ and $\Omega_r$ correspondingly.
In the vicinity $\Omega_0$ of the origin these parametrizations
take the form \eqref{mma1} and \eqref{mmr1}.


Substituting $h_a$ and $h_r$ into \eqref{e:attr3-pq}, we obtain
autonomous differentian equations which describe the dynamics
of the $(p, q)$-component of the attractive slow solutions
of the main system \eqref{e:main-pq} for negative $t$:
\begin{equation} \label{e:attr_n}
{\arraycolsep=0pt \begin{array}{rl}
\dot p &{} = P_a (p, q) = P(p, q, h_a(p, q), 0), \\[3pt]
\dot q &{} = Q_a (p, q) = Q(p, q, h_a(p, q), 0),
\end{array} }
\end{equation}
and of the repulsive slow solutions for positive $t$:
\begin{equation} \label{e:repul_n}
{\arraycolsep=0pt \begin{array}{rl}
\dot p &{} = P_r (p, q) = P(p, q, h_r(p, q), 0), \\[3pt]
\dot q &{} = Q_r (p, q) = Q(p, q, h_r(p, q), 0),
\end{array} }
\end{equation}
These representations are valid in a vicinity of the curves $\Gamma_a$
and $\Gamma_r$ correspondingly. In the vicinity of zero
\eqref{e:attr_n} becomes \eqref{e:attr-pq} and \eqref{e:repul_n}
becomes \eqref{e:repul-pq}.

Consider now a small vicinity of the intersection point $(p_*, q_*)$
of the curves $\Gamma_a$ and $\Gamma_r$, which existence is
guaranteed by Assumption \ref{a:intersect}. Let $(p_0, q_0)$
be a point in this vicinity. Denote by $w_a(t, t_0, p_0, q_0) =
(p_a(t, t_0, p_0, q_0), q_a(t, t_0, p_0, q_0))$ the solution
of \eqref{e:attr_n} with the initial condition $p(t_0) = p_0$, $q(t_0) = q_0$,
and by $w_r(t, t_0, p_0, q_0) = (p_r(t), q_r(t))$
the solution of \eqref{e:repul_n} with the same initial condition
$p(t_0) = p_0$, $q(t_0) = q_0$.

Denote
\[
A = P_a(p^*, q^*) Q_r(p^*, q^*) - P_r(p^*, q^*) Q_a(p^*, q^*).
\]
Since the intersection between $\Gamma_a$ and $\Gamma_r$ is transversal
according to Assumption \ref{a:transv}, $A \not= 0$, and the numbers
$u(p, q)$ and $v(p, q)$ may be defined in this vicinity by
\[
w_a(u(p, q), 0, p, q) \in \Gamma_r, \quad
w_r(v(p, q), 0, p, q) \in \Gamma_a.
\]

\begin{figure}[htb]
\begin{center}
\includegraphics*[width=7cm]{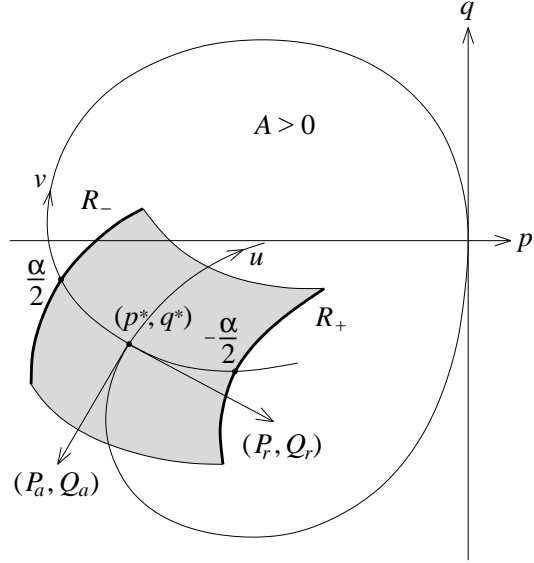}
\end{center}
\caption{The set $\Pi(\alpha)$ on the plane $(p, q)$.}
\label{fig:pi-rect}
\end{figure}

Using the coordinates $u(p, q)$ and $v(p, q)$ we introduce
for a sufficiently small $\alpha > 0$ a `parallelogram' set
$\Pi(\alpha)$, illustrated on Fig.\ \ref{fig:pi-rect}:
\begin{equation} \label{e:par}
\Pi(\alpha) = \{ (p, q) \colon \abs{u(p, q)}, \abs{v(p, q)} < \alpha / 2 \}.
\end{equation}
Also introduce the notation for the two `sides' of this parallelogram:
\begin{alignat*}{1}
R_- &{} = \{ (p, q) \colon v(p, q) = +\alpha /2 \sgn A, \abs{u(p, q)} \le \alpha / 2 \}, \\
R_+ &{} = \{ (p, q) \colon v(p, q) = -\alpha /2 \sgn A, \abs{u(p, q)} \le \alpha / 2 \}.
\end{alignat*}

The solutions $w_a(t, p_0, q_0) = w_a(t, \tau, p_0, q_0)$ of
the system \eqref{e:attr_n} with the initial condition
$(p(\tau), q(\tau)) = (p_0, q_0) \in \Pi$ have several
important properties described below. To formulate them,
we will need some auxiliary definitions.

Define the set
\[
E = \{ (p, q) \colon (p = 0 \wedge q \le 0) \vee (p \le 0 \wedge q = 0) \},
\]
which is the union of the left half of the horizontal $p$ coordinate
axis and the bottom half of the vertical $q$ coordinate axis.
Consider a set ${\mathcal T}(p_0, q_0)$ of all time moments when the
solution $w_a(t, p_0, q_0)$ intersects the set $E$:
\[
{\mathcal T}(p_0, q_0) = \{ t \colon w_a(t, p_0, q_0) \in E \}.
\]

\begin{figure}[htb]
\begin{center}
\includegraphics*[width=4cm]{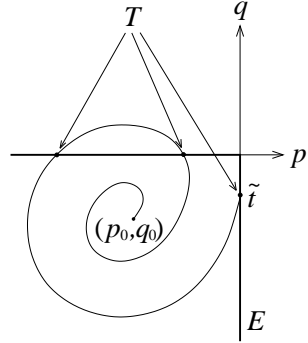}
\end{center}
\caption{The set ${\mathcal T}$ and the moment $\tilde t$.}
\label{fig:t-tilde}
\end{figure}

\begin{definition}
Let $\tilde t(p_0, q_0)$ be the moment from the set ${\mathcal T}(p_0, q_0)$
closest to zero, see Fig.\ \ref{fig:t-tilde}:
\[
\tilde t = \argmin_{t \in {\mathcal T}(p_0, q_0)} \abs{t}.
\]
Below we sometimes omit the point $(p_0, q_0)$ and write simply $\tilde{t}$,
when the arguments can be uniquely identified from the context.
\end{definition}

\begin{definition}
If $\tilde{t}(p_0, q_0)$ exists, and $p_a(\tilde t, p_0, q_0) = 0$
and $q_a(\tilde t, p_0, q_0) < 0$, then the solution $w_a(t, p_0, q_0)$
together with the point $(p_0, q_0)$ are called \emph{destabilizing}.
If $p_a(\tilde t, p_0, q_0) < 0$ and $q_a(\tilde t, p_0, q_0) = 0$, then
the solution $w_a(t, p_0, q_0)$ and the point $(p_0, q_0)$
are called \emph{stabilizing}.
\end{definition}

This definition emphasizes the fact that the solution
$w_\eps(t, x_0, y_0)$ of the main equation \eqref{e:main}
with a stabilizing initial condition will stay close
to the attractive half-plane $P_a$ for some time after $t > 0$,
if $\eps$ is sufficiently small; if initial condition
is destabilizing, then $h_\eps(t)$ will rapidly increase
after $t > 0$. The proof of this fact will be the subject
of several propositions, all leading to Proposition \ref{p:case3}.

\begin{statement} \label{s:1}
If $\alpha$ is sufficiently small, then the moment $\tilde t$
is defined for all $(p_0, q_0) \in \Pi$ and is continuous
on $\Pi$ with respect to $(p_0, q_0)$.
\end{statement}

\begin{statement} \label{s:2}
If $\alpha$ is sufficiently small, then a point $(p_0, q_0) \in \Pi$ is
destabilizing if $A \cdot v(p_0, q_0) > 0$, and stabilizing if $A \cdot v(p_0,
q_0) < 0$. In particular, $R_-$ is destabilizing and $R_+$ is stabilizing.
\end{statement}

\begin{statement} \label{s:3}
Let $\alpha$ be suficiently small. If a point $(p_0, q_0)$ is
stabilizing, then $H_{h} (w_a(t, p_0, q_0)) < 0$ for $\tau \le t \le \delta(\alpha)$
where $\delta(\alpha) > \max_\Pi \{ \tilde t (p_0, q_0) \} > 0$
depends only on $\alpha$.
\end{statement}

Statement \ref{s:1} follows from the continuous dependence of $w_a$ on $(p_0,
q_0)$ and from the fact that $w_a^*$ intersects the line $q = 0$
transversally.

To prove Statements \ref{s:2} and \ref{s:3}, we can consider the projection
$d(t)$ of the difference between the trajectories $w_a(t, p_0, q_0)$ and
$w^*(t)$ onto the normal vector for $w^*(t)$, $t \ge \tau$:
\[
d(t) = (p_a(t) - p^*(t)) Q_a(p^*(t), q^*(t)) -
       (q_a(t) - q^*(t)) P_a(p^*(t), q^*(t)).
\]
Then the variation of $d(t)$ satisfies the following initial value problem:
\[
\dot r(t) = (\tfrac{\partial}{\partial p} P_a(p^*(t), q^*(t)) +
\tfrac{\partial}{\partial q} Q_a(p^*(t), q^*(t))) r(t), \quad r(\tau) = 1,
\]
and the following equality holds for small $\Delta p_0 = p_0 - p_*$,
$\Delta q_0 = q_0 - q^*$, $d_0 = \Delta p_0 Q_a(p^*, q^*) -
\Delta q_0 P_a(p^*, q^*)$:
\[
d(t) = r(t) d_0 + o(\Delta p_0) + o(\Delta q_0).
\]
Therefore, if $d_0 > 0$, and $\Delta p_0$ and $\Delta q_0$ are small,
then $d(t) > 0$ for $\tau \le t$. Assumption \ref{a:transv}
provides that $d_0 > 0$ for $(p_0, q_0) \in R_-$, and
$d_0 < 0$ for $(p_0, q_0) \in R_+$.

Denote by
\begin{equation} \label{e:sol-eps}
w_\eps(t, p_0, q_0) = (p_\eps(t, p_0, q_0), q_\eps(t, p_0, q_0), h_\eps(t, p_0, q_0))
\end{equation}
the solution of the system \eqref{e:main-pq} with the initial condition
\[
p(\tau) = p_0, \quad q(\tau) = q_0, \quad h(\tau) = h_0 = (h^*(\tau) + h^*(\sigma)) / 2,
\]
with $(p_0, q_0) \in \Pi(\alpha)$.

\begin{definition}
We say that $a$ is $\eps$-close to $b$, if $a \to b$ as $\eps \to 0$.
\end{definition}

In our assumptions, the solution $w_\eps(t, p_0, q_0)$ first rapidly
approaches the attractive half-plane $S_a$,
and then follows $S_a$ until $t \approx 0$. The possible subsequent
behaviors of the solution are the following:

\begin{enumerate}
\item The solution stays close to the attractive part of the slow surface
for $t \le \delta$, where $\delta > 0$.

\item The solution begins to rapidly increase in the $h$ direction
around $t \approx 0$.

\item The solution follows the repulsive half-plane $S_r$
until a moment $t^*$. Moreover, for positive $t < t^*$ it follows
the curve $\Gamma_r$. After the moment $t^*$
the solution may begin to rapidly increase in the $h$ direction,
or it may fall down back to the attractive part of the slow surface.
\end{enumerate}

The propositions below provide a more rigorous description of the
qualitative description above.

\begin{proposition} \label{p:heps-ha}
Let $\eps$ be sufficiently small, and let $(w_a(t, p_0, q_0), h_a(w_a(t))) \in S_a$
for $\tau \le t \le T$ (this means that the corresponding solution
of \eqref{e:attr3-pq} does not cross the turning line $L$). Then
the difference $h_\eps(t) - h_a(p_\eps(t), q_\eps(t))$ becomes $\eps$-small
at the moment $t_1$ which is $\eps$-close to $\tau$, and stays
$\eps$-small for $t_1 \le t \le T$.
\end{proposition}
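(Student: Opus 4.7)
The plan is to carry out a Tikhonov-style boundary-layer argument for the singularly perturbed system \eqref{e:main-pq}. I introduce the deviation
\[
\delta(t) = h_\eps(t) - h_a(p_\eps(t), q_\eps(t))
\]
and derive its equation of motion. Because $H(p,q,h_a(p,q),0) \equiv 0$ on the attractive sheet and $H_h \le -\kappa < 0$ uniformly on a small tubular neighborhood $N$ of $S_a$ (a consequence of Lemma \ref{existL} together with the hypothesis that $w_a$ stays off the turning line $L$ on $[\tau,T]$), Taylor expanding $H$ in $h$ around $h_a$ and differentiating $h_a(p_\eps,q_\eps)$ along the flow gives, inside $N$,
\[
\eps\dot\delta = H_h(p_\eps,q_\eps,h_a,0)\,\delta + O(\eps) + O(\delta^2),
\]
where the $O(\eps)$ term absorbs both the explicit $\eps$-dependence in $H$ and the contribution $-\eps(\partial_p h_a\cdot P + \partial_q h_a\cdot Q)$ from the chain rule.

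First I would verify that $h_\eps$ is trapped in the gap between the two sheets and so enters $N$ in a short time. At $(p_0,q_0)\in\Pi(\alpha)$ the sheets satisfy $h_a < h_r$ and, by \eqref{e:pqr-cond}, $H(p,q,\cdot,0)$ is concave-up quadratic in $h$ vanishing at $h_a$ and $h_r$, so $H < 0$ strictly between them. Consequently, starting from $h_0 = (h^*(\tau)+h^*(\sigma))/2$, the coordinate $h_\eps$ decreases monotonically under the fast flow as long as it lies in this gap, which confines the trajectory into $N$ in time $O(\eps)$. The second step is the boundary-layer contraction proper: comparing the scalar ODE for $|\delta|$ with the linear model $\eps\dot u = -\kappa u + C\eps$ yields
\[
|\delta(t)| \le e^{-\kappa(t-\tau)/\eps}|\delta(\tau)| + \frac{C}{\kappa}\eps,
\]
so choosing $t_1 = \tau + C'\eps\abs{\log\eps}$ makes $|\delta(t_1)| = O(\eps)$, and $t_1$ is $\eps$-close to $\tau$ as required.

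For the remaining range $t_1 \le t \le T$ I would use a barrier argument: the slab $\{|\delta|\le M\eps\}$ is positively invariant for any sufficiently large $M$, because on its boundary $\eps\dot\delta = H_h\cdot M\eps + O(\eps)$ has strict sign opposite to $\delta$, so trajectories cannot exit. The main obstacle I anticipate is making all of these estimates uniform along the entire interval $[\tau,T]$: one must secure a single tubular neighborhood $N$ on which both the graph representation $h = h_a(p,q)$ and the bound $H_h \le -\kappa$ hold, together with uniform bounds on $\nabla h_a$ and on the remainders. This uniformity is available because the hypothesis places the set $\{(w_a(t,p_0,q_0),h_a(w_a(t))) : \tau\le t\le T\}$ in the smooth part of $S_a$, away from $L$, so that compactness delivers uniform constants. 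Once $\kappa$, $C$, and $M$ are fixed by this compactness, the three steps above combine to give the asserted $\eps$-smallness of $h_\eps - h_a(p_\eps,q_\eps)$ throughout $[t_1,T]$.
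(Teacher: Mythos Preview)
Your argument is correct and follows essentially the same route as the paper: define the deviation $\phi = h_\eps - h_a(p_\eps,q_\eps)$, use $H'_h \le -\kappa < 0$ on a tubular neighborhood of the attractive sheet (available by compactness since $w_a$ stays off $L$), derive a scalar differential inequality of the form $\dot\phi \le -\tfrac{\kappa}{\eps}\phi + C$, and conclude exponential contraction to an $O(\eps)$ slab in time $O(\eps\abs{\log\eps})$, after an initial $O(\eps)$ fast drop from $h_0$.

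One small correction: your justification for the initial descent (``by \eqref{e:pqr-cond}, $H(p,q,\cdot,0)$ is concave-up quadratic in $h$'') invokes the second derivative $H''_{hh}>0$ at the \emph{origin}, but the initial point lies over $(p^*,q^*)\in\Pi(\alpha)$, which is far from the turning line.  The needed fact $H<0$ on the vertical segment between the sheets at $(p^*,q^*)$ is exactly the content of the second clause of Assumption~\ref{a:intersect}, and that is what the paper invokes.
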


\begin{proof}
Lemma \ref{existL} and the choice of $\alpha$ imply that the inequality
\[
-C_2 \le H'_h(p, q, h, \eps) \le -C_1 < 0, \qquad C_1, C_2 > 0,
\]
holds in a vicinity $\Omega$ of the curve
$(w_a(t, p_0, q_0), h_a(w_a(t)))$, $\tau \le t \le T$. Suppose also
without loss of generality that all the functions in the right-hand side
of \eqref{e:main-pq} are bounded by a constant $M > 0$ along with all the
first derivatives.

By virtue of Assumption \ref{a:intersect}, $H(p, q, h, \eps) \le H_0 < 0$
in a vicinity of the vertical line
$p = p^*, q = q^*, h^*(\tau) + \Delta h \le h \le h_0$, where $\Delta h$
is sufficiently small. Thus, the vertical speed of the solution $w_\eps(t)$
goes to infinity as $\eps \to 0$, and therefore the solution reaches the
bottom end of this vicinity in an $\eps$-small time (linear
in $\eps$). The changes in $p_\eps$ and $q_\eps$ are $\eps$-small after this
time, thus the solution exits through the bottom part of this vicinity,
entering $\Omega$.

Denote $\phi(t) = h_\eps(t) - h_a(p_\eps(t), q_\eps(t))$.
After the solution entered $\Omega$, we have for $\phi \ge 0$:
\begin{alignat*}{1}
\dot\phi(t) &{}= \dot h_\eps(t) - \dot h_a(p_\eps(t), q_\eps(t)) =
\frac{1}{\eps} H(p, q, h, \eps) - \frac{\partial h_a}{\partial p} \dot p_\eps(t)
- \frac{\partial h_a}{\partial q} \dot q_\eps(t) \\
&{}= \frac{1}{\eps} H(p, q, h, \eps) + \frac{H'_p}{H'_h} P(p, q, h, \eps)
+ \frac{H'_q}{H'_h} Q(p, q, h, \eps) \\
&{}\le \frac{1}{\eps} H(p, q, \phi + h_a(p, q), \eps) + \frac{M^2}{C_1}
= \frac{1}{\eps} (H'_h \cdot \phi(t) + H'_\eps \cdot \eps) + \frac{M^2}{C_1} \\
&{}\le -\frac{C_1}{\eps} \phi(t) + M + \frac{M^2}{C_1}.
\end{alignat*}
By the theorem on differential inequalities,
\[
\phi(t) \le \bar \phi(t) = (\phi_0 - \eps C) e^{\frac{-C_1 t}{\eps}} + \eps C,
\]
where $C$ depends on $C_1$ and $M$. Calculate the time moment
when $\phi(t)$ becomes equal to $2 \eps C$:
\begin{gather*}
(\phi_0 - \eps C) e^{\frac{-C_1 t}{\eps}} = \eps C, \\
t = \frac{\eps}{C_1} \ln \frac{\phi_0 - \eps C}{\eps C} \le
\frac{\eps}{C_1} \ln \frac{\Delta h^*}{\eps C},
\end{gather*}
where $\Delta h^* = h^*(\sigma) - h^*(\tau)$.
Thus, $\phi(t)$ gets $\eps$-close to zero from above after
an $\eps$-small time interval $[\tau, t_1]$.

We obtain the lower bound for $\phi(t)$ in the same way:
\begin{alignat*}{1}
\dot\phi(t) &{}\ge \frac{1}{\eps} H(p, q, \phi + h_a(p, q), \eps) - \frac{M^2}{C_1}
= \frac{1}{\eps} (H'_h \cdot \phi(t) + H'_\eps \cdot \eps) - \frac{M^2}{C_1} \\
&{}\ge -\frac{C_2}{\eps} \phi(t) - M - \frac{M^2}{C_1},
\end{alignat*}
where $\phi(t) > 0$. Therefore
\[
\phi(t) \ge (\phi_0 + \eps C) e^{\frac{-C_1 t}{\eps}} - \eps C.
\]
Similar equations can be written for the case $\phi(t) < 0$
with $C_2$ instead of $C_1$. In any case, $\phi(t)$ remains
$\eps$-close to zero for $t_1 \le t \le T$.
\end{proof}

\begin{proposition} \label{p:inv-sets}
Consider the set $U_1(\beta) = \{ (p, q, h) : h > h_a(p, q) - \beta \}$
defined in the vicinity $\Omega_a$ of the curve $\Gamma_*$ for $t < 0$
by virtue of \eqref{e:ha-hr}, and in the vicinity $\Omega_0$ of the origin
for $p \le 0$ by virtue of \eqref{mma1}. Also introduce the set
$U_2(\beta) = \{ (p, q, h) : h > p - \beta \}$ in $\Omega_0$
for $p > 0$, and let $U_a = U_1 \cup U_2$. Denote
\[
\gamma(\eps) = \inf \{ \beta \colon w_\eps(t, p_0, q_0) \in U_a(\beta)
\text{ for any } (p_0, q_0) \in \Pi \}.
\]
Then $\gamma(\eps)$ is $\eps$-small.
\end{proposition}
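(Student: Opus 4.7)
The plan is to split the time interval of the trajectory $w_\eps(t, p_0, q_0)$ into phases according to which region of $\Omega_a \cup \Omega_0$ the $(p,q)$-projection occupies, and to verify the $U_a$-inclusion with $\beta$ tending to zero as $\eps \to 0$ on each phase. The initial value $h_0 = (h^*(\tau) + h^*(\sigma))/2$ lies strictly above $h^*(\tau) = h_a(p^*, q^*)$ because $y^*(\tau) < y^*(\sigma)$ by Assumption \ref{a:intersect}, and by continuity on the compact $\Pi$ the initial gap $h_0 - h_a(p_0, q_0)$ is bounded uniformly in $(p_0, q_0)$.

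First, on the phase when $(p_\eps(t), q_\eps(t)) \in \Omega_a$ or in the attractive part of $\Omega_0$ with $p_\eps \le 0$, Proposition \ref{p:heps-ha} applies. The deviation $\phi(t) = h_\eps(t) - h_a(p_\eps(t), q_\eps(t))$ decays exponentially on an $\eps$-short time scale and thereafter stays $\eps$-close to zero; the symmetric lower bound $\phi(t) \ge -C\eps$ is already present in the proof of that proposition, so $h_\eps \ge h_a - C\eps$ throughout, yielding inclusion in $U_1(C\eps)$. Next, on the phase with $p_\eps(t) > 0$ in $\Omega_0$, the attractive surface no longer exists and we must use $U_2$. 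The Taylor expansion $H(p, q, h, 0) = \xi p + \zeta h^2/2 + \dots$ with $\xi, \zeta > 0$ forces $H > 0$ throughout the half-space $p > 0$ in $\Omega_0$, so $\dot h_\eps = H/\eps$ is large and positive. Setting $\psi = h_\eps - p_\eps + \beta$, one checks that whenever $\psi = 0$ with $p_\eps \ge C_1 \eps$, the drift $\dot\psi = H/\eps - P \ge \xi p_\eps/\eps - M > 0$ for $\eps$ small; and for the brief transient with $p_\eps \le C_1 \eps$ the previous-phase bound $h_\eps \ge -C\eps$, together with $\beta$ of order $\eps$, already forces $\psi > 0$. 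This yields the $U_2(C'\eps)$ inclusion.

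The main obstacle is the transition at the turning line $p = 0$, where the representation $h_a = -\sqrt{-p D_a}$ has a square-root singularity and the one-sided bound $H'_h \le -C_1 < 0$ underlying Proposition \ref{p:heps-ha} degenerates to $H'_h = 0$ by \eqref{koordE}. To splice the $U_1$ and $U_2$ estimates uniformly across this interval I would introduce a small buffer $\{\abs{p} < C_0 \eps^{1/3}\}$ around the fold, verify the $U_a$-inclusion inside the buffer directly from the Taylor expansion of $H$ (the relevant canard scaling near the fold is $p \sim \eps^{2/3}$, $h \sim \eps^{1/3}$), and use Proposition \ref{p:heps-ha} on either side of the buffer. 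Taking the maximum of the finitely many $\eps$-small bounds produced on the phases then yields $\gamma(\eps) \to 0$, completing the argument.
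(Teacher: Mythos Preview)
Your approach is genuinely different from the paper's. You track the trajectory phase by phase---invoking Proposition~\ref{p:heps-ha} while the $(p,q)$-projection is on the attractive side, a barrier argument for $\psi = h_\eps - p_\eps + \beta$ when $p > 0$, and a separate buffer analysis near the fold. The paper instead argues by contradiction and never follows the trajectory: assuming $\gamma(\eps) \not\to 0$, it extracts a fixed $\gamma_0 > 0$, a sequence $\eps_n \to 0$, and points $w_{\eps_n}(t_n) \notin U_a(\gamma_0)$ with a limit $w_0$. It then checks that at any such $w_0$ the inward normal derivative of $\partial U_a(\gamma_0)$ along the flow is positive for small $\eps$: on the $U_1$ part this derivative is $-P H'_p - Q H'_q - (1/\eps) H H'_h$, and since $w_0$ lies strictly below the attractive sheet one has $H(w_0) > 0$ and $H'_h(w_0) < 0$, so the $1/\eps$ term dominates; on the $U_2$ part it is $-P + H/\eps$ with $H(w_0) > 0$. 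Either way the field points into $U_a(\gamma_0)$, contradicting the existence of an exit point.

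The key advantage of the paper's route is that it works at a \emph{fixed} positive $\gamma_0$, so the boundary $\partial U_a(\gamma_0)$ stays a definite distance below the fold line, the degeneracy $H'_h = 0$ at $p = h = 0$ is never encountered, and no buffer or canard-scaling analysis is needed. Your approach would ultimately yield sharper quantitative information (an explicit rate for $\gamma(\eps)$), but the fold step is only sketched: ``verify the $U_a$-inclusion inside the buffer directly from the Taylor expansion'' is precisely the nontrivial part, and as written it is a plan rather than a proof. For the statement as posed---$\gamma(\eps) \to 0$, with no rate required---the paper's invariance argument is both shorter and complete.
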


\begin{proof}
Suppose that $\gamma(\eps)$ is not $\eps$-small. Then there exist
$\gamma_0 > 0$, $\eps_n \to 0$, $(p_n, q_n)$, and $t_n$, such that
$w_\eps(t_n, p_n, q_n) \not\in U_a(\gamma_0)$. Denote the limit point
of the sequence $w_\eps(t_n, p_n, q_n)$ by $w_0 \not\in U_a(\gamma_0)$.
Suppose that $w_0 \not\in \Omega_0$, or $w_0 \in \Omega_0$ and $p_0 \le 0$.
Consider the scalar product $\langle \dot w_\eps, \mathop{\mathrm{grad}} S_a \rangle$
at the point $w_0$:
\[
C_1 = \langle \dot w_\eps, \mathop{\mathrm{grad}} S_a \rangle =
-P H'_p - Q H'_q - \frac{1}{\eps} H H'_h.
\]
Note that $H(w_0) > 0$, $H'_h(w_0) < 0$, and the first and second terms are
bounded. Therefore we can select a sufficiently
small $\eps > 0$ such that this scalar product is positive, which means
that the velocity vector $\dot w_\eps$ is directed inside the set
$U_a(\gamma_0)$. Due to the choice of $w_0$ we can select $n$ such that
$\eps_n < \eps$, and $w_\eps(t_n, p_n, q_n)$ is sufficiently close
to $w_0$, therefore this scalar product should be close to $C_1$ by
virtue of continuity of the right hand side of \eqref{e:main-pq}, and thus
positive. However, the velocity vector $\dot w_\eps$ is not directed
inside the set $U_a(\gamma_0)$, implying that the scalar product must
be non-positive, contradicting positiveness of $C_1$.

In the case $w_0 \in \Omega_0$, $p_0 > 0$, the scaler product
$\langle \dot w_\eps, \mathop{\mathrm{grad}} \partial U_2(\gamma_0) \rangle$
takes the form
\[
C_2 = \langle \dot w_\eps, \mathop{\mathrm{grad}} \partial U_2(\gamma_0) \rangle =
-P + \frac{1}{\eps} H.
\]
In this case $H(w_0) > 0$, and the same reasoning applies.
This contradiction proves that $\gamma(\eps)$ is $\eps$-small.
\end{proof}

This proposition shows that there exists an invariant set
$\eps$-close to $S_a$, that the trajectories $w_a(t, p_0, q_0)$
of \eqref{e:main-pq} with $(p_0, q_0) \in \Pi$ do not intersect.
By applying the same reasoning to solutions of \eqref{e:main-pq}
in backward time, we can obtain the existence of an invariant set $U_r$
$\eps$-close to $S_r$, that the trajectories $w_a(t, p_0, q_0)$
which return to the vicinity of the point $(p^*, q^*, h_0)$
also do not intersect:
\[
U_r = \{ (p, q, h) : h > h_a(p, q) - \gamma(\eps) \} \cup
      \{ (p, q, h) : p > 0 \wedge h > p - \gamma(\eps) \}.
\]

\begin{proposition} \label{p:stable}
If $(x_0, y_0)$ is stabilizing and $\eps$ is sufficiently small,
then $p_\eps(t, p_0, q_0)$ is $\eps$-close to $p_a(t, p_0, q_0)$, and
$q_\eps(t, p_0, q_0)$ is $\eps$-close to $q_a(t, p_0, q_0)$
for all $\tau \le t \le \delta(\alpha)$.
\end{proposition}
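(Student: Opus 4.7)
The plan is to reduce Proposition \ref{p:stable} to a combination of Statement \ref{s:3} and Proposition \ref{p:heps-ha}, followed by a Gronwall comparison on the slow variables $(p, q)$. The crucial point is that the \emph{stabilizing} hypothesis is precisely what guarantees the reduced attractive trajectory $w_a$ stays strictly inside $S_a$ over the full interval $[\tau, \delta(\alpha)]$, so the hypothesis of Proposition \ref{p:heps-ha} is satisfied.

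First, I would invoke Statement \ref{s:3}: because $(p_0, q_0)$ is stabilizing, one has $H_h(w_a(t, p_0, q_0), h_a(w_a(t, p_0, q_0)), 0) < 0$ for all $\tau \le t \le \delta(\alpha)$. In particular, the lifted reduced trajectory $(w_a(t), h_a(w_a(t)))$ lies in the interior of the attractive slow surface on this compact interval and never meets the turning line. Proposition \ref{p:heps-ha} then applies with $T = \delta(\alpha)$ and provides a transient time $t_1$ with $t_1 - \tau = O(\eps \ln(1/\eps))$ such that the deviation $\phi(t) := h_\eps(t) - h_a(p_\eps(t), q_\eps(t))$ is $\eps$-small throughout $[t_1, \delta(\alpha)]$.

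Next I would handle the boundary layer $[\tau, t_1]$ and then close via Gronwall. During $[\tau, t_1]$ the right-hand sides $P$ and $Q$ in \eqref{e:main-pq} are uniformly bounded by some $M$, so both $(p_\eps(t), q_\eps(t))$ and $(p_a(t), q_a(t))$ remain within $O(\eps \ln(1/\eps))$ of $(p_0, q_0)$; in particular $(p_\eps(t_1), q_\eps(t_1))$ is $\eps$-close to $(p_a(t_1), q_a(t_1))$. For $t \in [t_1, \delta(\alpha)]$, substituting $h_\eps = h_a(p_\eps, q_\eps) + O(\eps)$ into the slow equations and using $C^1$ smoothness of $P, Q$ in $h$ and $\eps$ yields
\[
\dot p_\eps = P_a(p_\eps, q_\eps) + O(\eps), \qquad \dot q_\eps = Q_a(p_\eps, q_\eps) + O(\eps).
\]
Thus $(p_\eps, q_\eps)$ satisfies an $O(\eps)$ perturbation of the autonomous reduced system \eqref{e:attr_n} on the compact interval $[t_1, \delta(\alpha)]$ with $\eps$-close initial data at $t_1$. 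Since $P_a, Q_a$ are Lipschitz on a compact neighbourhood of the arc $\{w_a(t, p_0, q_0) : \tau \le t \le \delta(\alpha)\}$, Gronwall's inequality delivers $(p_\eps, q_\eps)$ $\eps$-close to $(p_a, q_a)$ throughout, as required.

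The main obstacle will be the boundary-layer bookkeeping near $t = \tau$: one has to verify that during the fast collapse of $h_\eps$ onto $S_a$ the slow components drift only $\eps$-negligibly, and that the effective initial datum for the slow-manifold comparison at $t = t_1$ is genuinely $\eps$-close to $(p_a(t_1), q_a(t_1))$. Because the standing assumptions forbid derivative estimates on $\hat X, \hat Y$, this bound must be obtained integrally from the uniform smallness supplied by $\lambda$ rather than through any Lipschitz constant of the perturbations, so all smoothness-based calculations have to be kept on the unperturbed fields $P, Q, H$.
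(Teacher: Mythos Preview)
Your proposal is correct and follows essentially the same route as the paper's proof: invoke Statement~\ref{s:3} to verify the hypothesis of Proposition~\ref{p:heps-ha}, bound the boundary-layer drift of the slow variables by $O(M(t_1-\tau))$, and then close with a Gronwall-type differential inequality on $\psi(t)=|p_\eps-p_a|+|q_\eps-q_a|$. Your final worry about $\hat X,\hat Y$ is unnecessary here, since Proposition~\ref{p:stable} is stated and proved for the unperturbed system~\eqref{e:main-pq}; the perturbations enter only later in the proof of Theorem~\ref{t:main}.
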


\begin{proof}
According to Statement \ref{s:3}, $H_{h} (w_a(t, p_0, q_0)) < H_0 < 0$
for $\tau \le t \le \delta(\alpha)$. Proposition \ref{p:heps-ha}
implies in this case that $h_\eps(t) - h_a(p_\eps(t), q_\eps(t))$
is $\eps$-small for $t_1 \le t \le \delta(\alpha)$, where $t_1$ is
$\eps$-close to $\tau$. Therefore, first we have
\begin{alignat*}{1}
\abs{p_\eps(t_1) - p_a(t_1)} &{}\le \abs{p_\eps(t_1) - p_0} + \abs{p_a(t_1) - p_0}
    \le 2 M (t_1 - \tau), \\
\abs{q_\eps(t_1) - q_a(t_1)} &{}\le 2 P (t_1 - \tau),
\end{alignat*}
and the statement of the poposition holds for $\tau \le t \le t_1$.

Denote $\psi(t) = \abs{p_\eps(t) - p_a(t)} + \abs{q_\eps(t) - q_a(t)}$, $t \ge t_1$.
Using \eqref{e:main-pq} and \eqref{e:attr-pq}, we obtain
\begin{alignat*}{1}
D_R \abs{p_\eps - p_a} &{}= \abs{P(p_\eps, q_\eps, h_a(p_\eps, q_\eps), \eps)
    - P(p_a, q_a, h_a(p_a, q_a), 0)} + P'_h \gamma_1(\eps) \\
    &{}= (P'_p - P'_h H'_p / H'_h) \abs{p_\eps - p_a} + (P'_q - P'_h H'_q / H'_h) \abs{q_\eps - q_a} +
    P'_\eps \eps + P'_h \gamma_1(\eps) \\
    &{}\le C (\abs{p_\eps - p_a} + \abs{q_\eps - q_a} + \gamma_1(\eps)), \\
D_R \abs{q_\eps - q_a}
    &{}\le C (\abs{p_\eps - p_a} + \abs{q_\eps - q_a} + \gamma_1(\eps)),
\end{alignat*}
where $D_R$ denotes the right derivative, $\gamma_1(\eps)$ is $\eps$-small,
and $C$ is a sufficiently large constant. Therefore
\[
D_R \psi(t) \le C_1 \psi(t) + \gamma_2(\eps), \quad \psi(t_1) \le \gamma_2(\eps),
\]
and by the theorem on differential inequalities,
\[
\psi(t) \le \gamma_3(\eps) e^{C_1 (t - t_1)} - \gamma_3(\eps),
\]
which is $\eps$-small on a finite time interval $t_1 \le t \le \delta(\alpha)$.
\end{proof}

\begin{proposition} \label{p:unstable}
Let $(x_0, y_0)$ be destabilizing and $\eps$ be sufficiently small.
Consider a small vicinity $\tilde\Omega$ of the point
$(0, \tilde q, 0) = (p_a(\tilde t), q_a(\tilde t), 0)$, and let $\hat t$
be the time when the solution $w_a(t)$ is inside this vicinity.
Then $p_\eps(t, p_0, q_0)$ is $\eps$-close to $p_a(t, p_0, q_0)$, and
$q_\eps(t, p_0, q_0)$ is $\eps$-close to $q_a(t, p_0, q_0)$
for all $\tau \le t \le \hat t$, and $w_\eps(t)$ exits
the invariant set $U_r$ when it leaves $\tilde\Omega$.
\end{proposition}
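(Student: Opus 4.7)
The strategy is to decompose the interval $[\tau,\hat t]$ into an \emph{attractive phase} $[\tau,t_0]$ with $t_0<\tilde t$ chosen so that $w_a|_{[\tau,t_0]}$ stays bounded away from the turning line $\{p=0\}$, and a short \emph{crossing phase} $[t_0,\hat t]\subset\tilde\Omega$. The fast escape of $w_\eps$ after $\hat t$ is then treated as a third, separate step.

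On the attractive phase, Proposition \ref{p:heps-ha} is directly applicable, so $\phi(t)=h_\eps(t)-h_a(p_\eps(t),q_\eps(t))$ is $\eps$-small on $[t_1,t_0]$ with $t_1$ being $\eps$-close to $\tau$. The Gronwall argument of Proposition \ref{p:stable} then carries over verbatim: setting $\psi(t)=\abs{p_\eps-p_a}+\abs{q_\eps-q_a}$, the smoothness of $P$, $Q$ and $h_a$ together with $\phi$ being $\eps$-small yield $D_R\psi(t)\le C\psi(t)+\gamma(\eps)$ with $\gamma(\eps)$ $\eps$-small, so $(p_\eps,q_\eps)$ is $\eps$-close to $(p_a,q_a)$ on $[t_1,t_0]$. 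The short initial transient $[\tau,t_1]$ contributes only a drift of order $2M(t_1-\tau)=O(\eps)$ by the uniform bound $M$ on $P$ and $Q$.

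In the crossing phase the shadowing estimate of Proposition \ref{p:heps-ha} breaks down because $H'_h$ vanishes on the turning line and $\phi$ is no longer contracted. The workaround is that $\tilde\Omega$ may be shrunk at will; the transversality of $\dot w_a(\tilde t)$ (which is non-zero since $(p_0,q_0)$ is destabilizing and the projected flow does not stagnate at $(0,\tilde q)$) guarantees $\hat t-t_0$ is as small as desired with the diameter of $\tilde\Omega$. Since $P$ and $Q$ are globally bounded independently of $h$ and $\eps$, both $(p_a,q_a)$ and $(p_\eps,q_\eps)$ move at bounded speed, so their separation on $[t_0,\hat t]$ can grow by at most $2M(\hat t-t_0)$, which is controlled by the choice of $\tilde\Omega$. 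Combined with the $\eps$-closeness inherited at $t_0$, this proves the first assertion of the proposition.

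For the exit from $U_r$, at time $t=\hat t$ the projected point $(p_a(\hat t),q_a(\hat t))$ lies on the far side of $\tilde\Omega$ with $p_a(\hat t)>0$ bounded away from zero, because the destabilizing character of $(p_0,q_0)$ means $w_a$ crosses the $p$-axis transversally at $\tilde t$ with $\dot p_a(\tilde t)>0$. In the slice $p>0$ near $(p_a(\hat t),q_a(\hat t),0)$ the function $H$ is bounded below by a strictly positive constant $C_0$, inherited from $H'_p(0,0,0,0)=\xi>0$ together with \eqref{e:pqr-cond} and the smallness of $h$; hence $\eps\dot h_\eps\ge C_0$ just after $\hat t$, so $h_\eps$ grows at rate $C_0/\eps$ and within time $O(\eps)$ exceeds the upper envelope of the thin neighborhood $U_r$ of $S_r$, which is the exit claim. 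The principal difficulty is the crossing phase: the standard contraction argument for the fast variable is unavailable at the turning line, and one cannot improve the shadowing estimate there, but one can compensate by taking $\tilde\Omega$ small enough that the $(p,q)$-drift across it is negligible.
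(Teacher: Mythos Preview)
Your shadowing argument on $[\tau,\hat t]$ is sound but over-engineered. In the paper $\hat t$ is taken to be a moment at which $w_a$ has \emph{entered} the box $\tilde\Omega$, hence still with $p_a(\hat t)<0$; on the whole interval $[\tau,\hat t]$ one therefore has $H'_h(w_a(t))\le H_0<0$, and the Gronwall estimate of Proposition~\ref{p:stable} applies verbatim without any ``crossing phase''. Your two-phase decomposition is not wrong, but it is solving a difficulty that does not actually arise once $\hat t$ is read this way.

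The exit claim, however, has a genuine gap. You assert that $p_a(\hat t)>0$ ``on the far side of $\tilde\Omega$'', but the attractive graph $h_a(p,q)=-\sqrt{-pD_a(p,q)}$ --- and hence the reduced flow $w_a$ --- is only defined for $p\le 0$; the slow trajectory simply cannot be followed past $p=0$, so one cannot appeal to $\eps$-closeness to place $w_\eps$ at a point with $p_\eps>0$. Your subsequent argument (that $H\ge C_0>0$ forces $h_\eps$ to blow up) therefore lacks its starting hypothesis. The paper proceeds differently: it first uses the shadowing to guarantee $w_\eps(\hat t)\in\tilde\Omega$, then exploits that $P\ge P_0>0$ and $Q\ge Q_0>0$ throughout $\tilde\Omega$ (the former coming from $P'_q(0)<0$ and $\tilde q<0$), chooses the box proportions so that $\tilde\delta_p<\tfrac{P_0}{2M}\tilde\delta_q$, and then does a three-case analysis on which face of $\tilde\Omega$ the solution $w_\eps$ exits through, checking in each case that the exit point lies outside $U_r$. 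This box argument is what replaces your unjustified positivity of $p_\eps$; it is the missing geometric ingredient.
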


\begin{proof}
By noting that $H_{h} (w_a(t, p_0, q_0)) < H_0 < 0$ for $\tau \le t \le \hat t$
and repeating the same steps as in Proposition \ref{p:stable}, we
obtain that $p_\eps(t, p_0, q_0)$ is $\eps$-close to $p_a(t, p_0, q_0)$, and
$q_\eps(t, p_0, q_0)$ is $\eps$-close to $q_a(t, p_0, q_0)$
for all $\tau \le t \le \hat t$. This means that $w_\eps(t)$
is inside $\tilde\Omega$ at the moment $\hat t$.

The vicinity $\Omega_0$ should be selected small enough that $Q \ge Q_0 > 0$
in this vicinity, and $\alpha$ and $\tilde\Omega$ should be small enough
such that $\tilde\Omega \subset \Omega_0$. \eqref{e:pqr-a1} together
with $\tilde q < 0$ imply that $P \ge P_0 > 0$ in $\tilde\Omega$.
Let
\[
\tilde\Omega = \{ (p, q, h) : \abs{p} < \tilde\delta_p,
\abs{q - \tilde q} < \tilde\delta_q, \abs{h} < \tilde\delta_h \},
\]
with sufficiently small $\tilde\delta_p$, $\tilde\delta_q$, $\tilde\delta_h$,
such that $\tilde\delta_p < \frac{P_0}{2 M} \tilde\delta_q$, and the bottom
side of $\tilde\Omega$ with $h = -\tilde\delta_h$ is outside $U_a$, and the
top side $h = \tilde\delta_h$ is outside $U_r$.

Consider the time moment $\bar t$ when $w_\eps(t)$ leaves
this vicinity. It can do this in three possible ways:
\begin{enumerate}
\item $p_\eps(\bar t) = \tilde\delta_p$. Then $h_\eps(\bar t) > -\gamma(\eps)$,
thus $w_\eps(t) \not\in U_r$.
\item $q_\eps(\bar t) = \tilde\delta_q$. Due to the choice of $\delta_p$,
$p_\eps(\bar t) > \tilde\delta_p$, thus this case is not possible, and case 1
actually takes place.
\item $h_\eps(\bar t) = \tilde\delta_h$. In this case $w_\eps(\bar t) \not\in U_r$
by choice of $\tilde\delta_h$.
\end{enumerate}
In all cases, $w_\eps$ leaves $U_r$.
\end{proof}

Propositions \ref{p:stable} and \ref{p:unstable} are both valid
for sufficiently small $\eps$, that is, for $\eps < \eps_0(p_0, q_0)$.
Due to continuous dependence of the solutions on initial data,
$\eps_0(p_0, q_0)$ can be selected in such a way that these
propositions will be valid for $\eps < \eps_0$ in some vicinity
of the point $(p_0, q_0)$. The next proposition shows, that
Case 3 above is only possible when the initial condition
$(p_0, q_0) \in \Pi$ is $\eps$-close to the curve $\Gamma_a$.

\begin{proposition} \label{p:case3}
For every $\eps$ define a set of points from $\Pi$ such that
Propositions \ref{p:stable}-\ref{p:unstable} hold for these
points with the selected $\eps$:
\[
\Delta(\eps) = \{ (p_0, q_0) \in \Pi \colon \eps_0(p_0, q_0) \ge \eps \}.
\]
Denote
\[
\gamma(\eps) = \sup \{ \abs{u(p_0, q_0)} \colon (p_0, q_0) \not\in \Delta(\eps) \}.
\]
Then $\gamma(\eps)$ is $\eps$-small. In other words, if a trajectory
$w_\eps(t, p_0, q_0)$ does not fulfill Propositions \ref{p:stable}-\ref{p:unstable},
then $v(p_0, q_0)$ is $\eps$-small.
\end{proposition}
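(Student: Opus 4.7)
The plan is to argue by contradiction, combining compactness of $\bar\Pi$ with a local-uniformity consequence of Propositions \ref{p:stable}--\ref{p:unstable}. I read the stated conclusion through the ``in other words'' clause, so that the quantity shown to be $\eps$-small is $|v(p_0,q_0)|$, i.e.\ the signed distance from $(p_0,q_0)$ to the canard curve $\Gamma_a$.

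The first step is a local-uniformity assertion for $\eps_0(\cdot)$: for every $(\bar p, \bar q) \in \Pi$ with $v(\bar p, \bar q) \neq 0$, there exist a neighbourhood $U \ni (\bar p, \bar q)$ and $\bar\eps_0 > 0$ such that $\eps_0(p_0, q_0) \ge \bar\eps_0$ for every $(p_0, q_0) \in U$. By Statement \ref{s:2}, the point $(\bar p, \bar q)$ is either stabilizing or destabilizing. In either case the constants controlling the $\eps$-small error terms in Propositions \ref{p:heps-ha}--\ref{p:unstable}---the lower bound $C_1$ on $-H'_h$ along the orbit $w_a(\cdot, \bar p, \bar q)$ in Proposition \ref{p:heps-ha}, the Gr\"onwall exponent of Proposition \ref{p:stable}, and the finite exit time $\hat t$ from $\tilde\Omega$ in Proposition \ref{p:unstable}---are infima or suprema of continuous functions along a compact orbit segment. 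By continuous dependence of solutions on initial data, these quantities perturb only slightly, so a single $\bar\eps_0$ works uniformly on a sufficiently small neighbourhood $U$.

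The second step is the contradiction. Assume the conclusion fails: there exist $\gamma_0 > 0$, $\eps_n \searrow 0$, and $(p_n, q_n) \in \Pi \setminus \Delta(\eps_n)$ with $|v(p_n, q_n)| \ge \gamma_0$. Compactness of $\bar\Pi$ gives a convergent subsequence $(p_n, q_n) \to (\bar p, \bar q)$ with $|v(\bar p, \bar q)| \ge \gamma_0 > 0$. Applying the first step at $(\bar p, \bar q)$ yields a neighbourhood $U$ and $\bar\eps_0 > 0$ with $\eps_0 \ge \bar\eps_0$ throughout $U$. For $n$ large one has $(p_n, q_n) \in U$ and $\eps_n < \bar\eps_0$, so $(p_n, q_n) \in \Delta(\eps_n)$, a contradiction.

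The main obstacle is the local-uniformity step. Each of the constants in Propositions \ref{p:heps-ha}--\ref{p:unstable} degenerates precisely in the canard regime $v(p_0, q_0) \to 0$: the limit orbit no longer has a definite sign of $H_h$ on the whole interval considered, $\hat t$ blows up, and the Gr\"onwall bound in Proposition \ref{p:stable} loses force. The delicate point is therefore quantifying how these constants depend on the initial datum: one must show that each is a lower/upper semicontinuous function of $(p_0,q_0)$ that is strictly bounded away from its degenerate value on $\{|v| \ge \gamma_0\}$. This semicontinuity, which follows from continuous dependence of the flow and the compactness of the relevant orbit segments, is the crux of the argument, and it is exactly what breaks on $\{v = 0\}$.
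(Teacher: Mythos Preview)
Your argument is correct and essentially identical to the paper's: both proceed by contradiction, extract a subsequence with $|v|\ge\gamma_0$, pass to a limit point which (by Statement~\ref{s:2}) is stabilizing or destabilizing, and invoke local uniformity of $\eps_0(\cdot)$ to force the contradiction. The paper records the local-uniformity claim as a one-sentence remark just before the proposition and then gives a very terse proof; you have correctly identified that step as the crux and spelled out why the relevant constants are semicontinuous in the initial datum away from $\{v=0\}$.
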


\begin{proof}
Suppose that $\gamma(\eps)$ is not $\eps$-small. Then there exist
$\gamma_0$, $\eps_n \to 0$, $(p_n, q_n) \in \Pi$, such that
$(p_n, q_n) \not\in \Delta(\eps_n)$, and $\abs{v(p_n, q_n)} > \gamma_0$.
Consider a limit point $(\hat p, \hat q)$ of the sequence $(p_n, q_n)$,
$\abs{v(\hat p, \hat q)} \ge \gamma_0$. The latter inequality means that
$(\hat p, \hat q)$ is either stabilizing or destabilizing, thus
$\eps_0(\hat p, \hat q)$ is defined and positive. Select $\eps_n$
such that $\eps_n < \eps_0$, and $(p_n, q_n)$ is in a vicinity of
the point $(\hat p, \hat q)$ where $\eps_0$ is defined. Then, on the
one hand, we have $\eps_n < \eps_0$, and on the other hand,
$(p_n, q_n) \not\in \Delta(\eps_n)$, therefore $\eps_n > \eps_0$.
This contradiction proves the proposition.
\end{proof}

To prove the existence of periodic canards, we need to define
a mapping $W_\eps \colon \overline{\Pi} \to \reals^2$, and to do that
we need an auxiliary time moment $s_\eps$, which is associated
with the point when the solution either returns to the vicinity
of the point $(p^*, q^*, h_0)$, or deviates from the repulsive
curve $\Gamma_r$ before or after reaching this set.

Recall that $\Omega_0$ denotes a vicinity of zero, where the
coordinate system is curvilinear, and relations
\eqref{e:pqr-cond}--\eqref{e:pqr-a1} are valid, and $\Omega_a$
and $\Omega_r$ are vicinities of $\Gamma_a$ and $\Gamma_r$
where the representations \ref{e:ha-hr} are defined.
By repeating the proofs of Propositions \ref{p:heps-ha} and \ref{p:stable}
in backward time, we can select a constant $C$ such that
$p_\eps - p_r, q_\eps - q_r, h_\eps - h_r \le C \eps$ for any
initial condition $(p_\eps(\sigma), q_\eps(\sigma)) = (p_0, q_0) \in \Pi(2 \alpha)$,
$h_\eps(\sigma) = h_0$, while $w_\eps \not\in \Omega_0$.
Denote
\[
\Omega_\eps^- {}= \{ (p, q, r) \colon h \le h_r(p, q) - 2 C \eps \} \cup \Omega_r \cup (\reals^2 \setminus \Omega_0).
\]
The definition of the time moment $s_\eps(p_0, q_0)$ is divided into several
possible cases:

\begin{enumerate}
\item If $h_\eps(t_1, p_0, q_0) \not\in U_r(2 \gamma(\eps))$ for some
$t_1 < \sigma + 3 \alpha$, then
$s_\eps(p_0, q_0) = \sigma + 2 \alpha$.
\item If $w_\eps(t_1, p_0, q_0) \in \Omega_\eps^-$ for some
$t_0 \le t_1 < \sigma - 3 \alpha$, where $t_0$
the moment when $w_\eps$ exits $\Omega_0$, then
$s_\eps(p_0, q_0) = \sigma - 2 \alpha$.
\item If $w_\eps \not\in \Omega_\eps^-$ for $t_0 \le t \le \sigma + 3 \alpha$,
then $s_\eps(p_0, q_0) = \sigma + 2 \alpha$.
\item Otherwise, the solution begins to fall back to
the attractive part of $S_0$ at the moment $t_1 \in
[\sigma - 3 \alpha, \sigma + 3 \alpha]$.
In this case let $s_\eps(p_0, q_0)$ be the first moment $t_2$ after
$t_1$ such that $h_\eps(t_2, p_0, q_0) = h_0$, or $\sigma + 2 \alpha$,
whichever the smallest, or $\sigma - 2 \alpha$,
whichever the greatest.
\end{enumerate}

\begin{proposition} \label{p:s-cont}
If $\eps$ is sufficiently small, then the moment of time $s_\eps(p_0, q_0)$ is
continuous on $\Pi$ with respect to $p_0$ and $q_0$.
\end{proposition}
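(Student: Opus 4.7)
The plan is to combine continuous dependence of ODE solutions on initial data with a careful match across the four cases that define $s_\eps$. Fix $\eps$ sufficiently small. By standard ODE theory applied to \eqref{e:main-pq}, $w_\eps(t, p_0, q_0)$ depends continuously on $(p_0, q_0)$ uniformly on any bounded time window such as $[t_0 - 1, \sigma + 3\alpha]$. This is the only analytic tool; the remaining work is to verify that the four cases assemble into a continuous function on $\Pi$.

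Within each case, continuity is either trivial or reduces to an implicit-function argument. Cases 1, 2, and 3 assign the constant values $\sigma + 2\alpha$, $\sigma - 2\alpha$, $\sigma + 2\alpha$ respectively, so $s_\eps$ is trivially continuous on each of these sets. Case 4 assigns the clamped quantity $\max\{\sigma - 2\alpha,\, \min\{t_2,\, \sigma + 2\alpha\}\}$, where $t_2 > t_1$ is the first moment with $h_\eps(t_2) = h_0$ after the fall-back time $t_1$. Both $t_1$ and $t_2$ are continuous in $(p_0, q_0)$ on the case-4 stratum by continuous dependence of $w_\eps$ combined with transversal crossing of the relevant level surfaces: at $t_2$ the trajectory satisfies $|\dot h_\eps| = |H|/\eps \ne 0$ because the fall is active, and transversality at $t_1$ follows from Propositions \ref{p:stable} and \ref{p:unstable}. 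The clamp, a composition of continuous operations, then preserves continuity.

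The heart of the argument is matching values across the shared boundaries. The key mechanism is the fast-fall estimate: once the trajectory has entered $\Omega_\eps^-$, $|\dot h_\eps|$ is bounded below by $C/\eps$ in the region between $S_r$ and $S_a$ (the downward analogue of the estimate underlying Proposition \ref{p:heps-ha}), so $t_2 - t_1 = O(\eps)$. Thus at the case-2/case-4 boundary, where $t_1$ approaches $\sigma - 3\alpha$, we get $t_2 < \sigma - 2\alpha$ for sufficiently small $\eps$, and the clamp yields $\sigma - 2\alpha$, matching case 2. At the opposite boundary, where $t_1$ approaches $\sigma + 3\alpha$, we get $t_2 > \sigma + 2\alpha$ and the clamp yields $\sigma + 2\alpha$, matching cases 1 and 3. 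Cases 1 and 3 share the value $\sigma + 2\alpha$, so their mutual boundary is automatic.

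The main obstacle will be the semicontinuity of the first-hit times $t_1$ and $t_2$ along sequences $(p_{0,n}, q_{0,n}) \to (p_0, q_0)$ crossing case boundaries, since such times are in general only lower semicontinuous. Upgrading to genuine continuity requires transversality of $w_\eps$ to $\partial \Omega_\eps^-$ at $t_1$ and to the level $\{h = h_0\}$ at $t_2$, both supplied by the fast-direction structure of \eqref{e:main-pq}: the $h$-component has order $1/\eps$ with a definite sign in the relevant regions. With that transversality in hand, $t_1$ and $t_2$ become continuous on the case-4 stratum, the clamped value varies continuously across every case boundary, and continuity of $s_\eps$ on all of $\Pi$ follows.
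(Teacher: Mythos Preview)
Your proposal is correct and follows essentially the same approach as the paper: a case-by-case analysis using continuous dependence of $w_\eps$ on initial data together with transversality of the relevant crossings (of $\partial U_r(2\gamma(\eps))$, of $\partial\Omega_\eps^-$, and of the plane $h=h_0$). The paper argues that each of the conditions defining Cases~1--3 is open and that $t_2$ in Case~4 varies continuously, whereas you treat the interior of each case and then match values across the case boundaries explicitly via the fast-fall estimate $t_2-t_1=O(\eps)$ and the clamp; these are two phrasings of the same mechanism, and your boundary-matching discussion is in fact slightly more explicit than the paper's.
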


\begin{proof}
Let $(p_0, q_0) \in \Pi$ and prove that $s_\eps$ is continuous at the point
$(p_0, q_0)$. Consider a point $(\hat p_0, \hat q_0)$ close to $(p_0, q_0)$
in all four cases from the definition of $s_\eps$:
\begin{enumerate}
\item
$h_\eps(t_1, p_0, q_0) \not\in U_r(2 \gamma(\eps))$ for some
$t_1 < \sigma + 3 \alpha$. The intersection between
$w_\eps$ and $\partial U_r(2 \gamma(\eps))$ is transversal,
thus by virtue of continuous dependence of the solution on
initial values there exists a moment $\hat t_1$ close to $t_1$
such that $h_\eps(\hat t_1, \hat p_0, \hat q_0) \not\in U_r(2 \gamma(\eps))$.
Therefore, $s_\eps(\hat p_0, \hat q_0) = \sigma + 2 \alpha$.

\item
$w_\eps(t_1, p_0, q_0) \in \Omega_\eps^-$ for some
$t_0 \le t_1 < \sigma - 3 \alpha$, where $t_0$
the moment when $w_\eps(t, p_0, q_0)$ exits $\Omega_0$, then
$s_\eps(p_0, q_0) = \sigma - 2 \alpha$. If $t_1 = t_0$,
then $w_\eps(\hat t_0, \hat p_0, \hat q_0) \in \Omega_\eps^-$
at the moment $\hat t_0$ when $w_\eps(t, \hat p_0, \hat q_0)$
exits $\Omega_0$ due to continuous dependence on initial values,
and $s_\eps(\hat p_0, \hat q_0) = \sigma - 2 \alpha$.

If $t_1 > t_0$, then the intersection between $w_\eps$ and $\omega_\eps^-$
is trasversal at the moment $t_1$, thus there exists $\hat t_1$
close to $t_1$ such that $w_\eps(\hat t_1, \hat p_0, \hat q_0) \in \Omega_\eps^-$,
and again $s_\eps(\hat p_0, \hat q_0) = \sigma - 2 \alpha$.

\item
$w_\eps(t, p_0, q_0) \not\in \Omega_\eps^-$ for $t_0 \le t \le \sigma + 3 \alpha$.
Then $w_\eps(t, \hat p_0, \hat q_0) \not\in \Omega_\eps^-$ for
$\hat t_0 \le t_1 \le \sigma + 3 \alpha$, and
$s_\eps(\hat p_0, \hat q_0) = \sigma + 2 \alpha$.

\item
$h_\eps(t_2, p_0, q_0) = h_0$. The intersection between $w_\eps$
and the plane $h = h_0$ is also tranvsersal, as above, thus there
exists a moment $\hat t_2$ close to $t_2$ such that
$h_\eps(\hat t_2, \hat p_0, \hat q_0) = h_0$. Therefore,
$s_\eps$ is continuous at $(p_0, q_0)$.
\end{enumerate}

In all cases, $s_\eps$ is continuous.
\end{proof}

\begin{proposition} \label{p:s-st-unst}
If $(p_0, q_0)$ is stabilizing, then $s_\eps = \sigma - 2 \alpha$ for
sufficiently small $\eps$. If $(p_0, q_0)$ is destabilizing, then
$s_\eps = \sigma + 2 \alpha$.
\end{proposition}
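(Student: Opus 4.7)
The plan is to apply Propositions \ref{p:stable} and \ref{p:unstable} and then read off which branch in the definition of $s_\eps$ actually fires. Throughout, $\alpha$ is already fixed small enough for Statements \ref{s:1}--\ref{s:3} and all the auxiliary propositions, and we only need to choose $\eps$ further small (depending on the given $(p_0,q_0)$).

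\textbf{Stabilizing case.} By Proposition \ref{p:stable}, $(p_\eps(t),q_\eps(t))$ stays $\eps$-close to $(p_a(t),q_a(t))$ on $[\tau,\delta(\alpha)]$, and combining with Proposition \ref{p:heps-ha}, $h_\eps(t)$ is $\eps$-close to $h_a(p_\eps(t),q_\eps(t))$ after an $\eps$-short initial transient. Since the initial datum is stabilizing, at $\tilde t$ we have $p_a(\tilde t)<0$, $q_a(\tilde t)=0$, so past $\tilde t$ the flow $w_a$ on the attractive slow surface moves transversally into the region $p<0$ and away from the origin. Consequently there is a definite time $t_0$, close to $\tilde t$, at which $w_a$, and therefore $w_\eps$, crosses $\partial\Omega_0$. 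For the fixed small $\alpha$, $\max_\Pi \tilde t$ is bounded well away from $\sigma$, which forces $t_0<\sigma-3\alpha$. At $t_0$ the trajectory enters $\reals^{3}\setminus\Omega_0\subset\Omega_\eps^-$, which is exactly Case~2 in the definition of $s_\eps$, yielding $s_\eps=\sigma-2\alpha$. To show Case~1 does not fire at some earlier $t_1<t_0$ it suffices to check that $w_\eps(t)\in U_r(2\gamma(\eps))$ for $\tau\le t\le t_0$; this is a consequence of Proposition \ref{p:inv-sets} applied in backward time, together with the fact that $w_\eps$ is $\eps$-close to the attractive surface and therefore cannot escape the $2\gamma(\eps)$-buffer layer.

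\textbf{Destabilizing case.} Proposition \ref{p:unstable} gives $p_\eps,q_\eps$ $\eps$-close to $p_a,q_a$ up to the time $\hat t$ at which $w_a$ inhabits $\tilde\Omega$, and asserts that $w_\eps$ leaves the invariant set $U_r$ upon leaving $\tilde\Omega$. Looking at the three exit modes enumerated in the proof of that proposition, the trajectory exits either through the side $p_\eps=\tilde\delta_p$ at a height $h_\eps>-\gamma(\eps)$, or through the top face $h_\eps=\tilde\delta_h$; in either case, for sufficiently small $\eps$ the $2\gamma(\eps)$-buffer is dominated by the geometric constants $\tilde\delta_p,\tilde\delta_h$, so $w_\eps\notin U_r(2\gamma(\eps))$ at some $t_1$ close to $\hat t$. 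Because $\hat t$ is close to $0$, we automatically have $t_1<\sigma+3\alpha$, so Case~1 fires and $s_\eps=\sigma+2\alpha$. To exclude Cases~2--4, note that while $w_\eps$ is still inside $\tilde\Omega\subset\Omega_0$ the exit time $t_0$ has not yet occurred, so Case~2's requirement $t_0\le t_1$ cannot be met; Cases~3 and~4 apply by the definition only if Case~1 has not already triggered.

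\textbf{Main obstacle.} The bookkeeping of trajectories is straightforward once Propositions \ref{p:stable}--\ref{p:unstable} are in hand; the delicate step is matching the $\eps$-dependent buffers $\gamma(\eps)$ that enter $\Omega_\eps^-$ and $U_r(2\gamma(\eps))$ against the fixed geometric constants $H_0$, $P_0$, $Q_0$, $\tilde\delta_p$, $\tilde\delta_h$ supplied by Assumption \ref{a:intersect} and the proof of Proposition \ref{p:unstable}. Choosing $\eps_0(p_0,q_0)$ so that $\gamma(\eps)$ is small compared with these constants handles a single initial condition, and — as will be used when assembling the Poincar\'e map — Proposition \ref{p:case3} makes this choice uniform on the portion of $\Pi$ bounded away from $\Gamma_a$.
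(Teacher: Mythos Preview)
Your argument follows the paper's strategy: invoke Propositions~\ref{p:stable} and~\ref{p:unstable} and read off which branch of the definition of $s_\eps$ is triggered. The stabilizing half is handled the same way (the paper's proof is terser: $w_\eps$ exits $\Omega_0$ $\eps$-close to $w_a$, hence lands in $\Omega_\eps^-$, so Case~2 fires).

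The destabilizing half differs slightly, and your version carries a small gap. You insist that Case~1 must fire, arguing that the $2\gamma(\eps)$-buffer is dominated by the fixed constants $\tilde\delta_p,\tilde\delta_h$. But Proposition~\ref{p:unstable} only guarantees exit from $U_r=U_r(\gamma(\eps))$, not from the larger set $U_r(2\gamma(\eps))$; in the side-exit mode $p_\eps=\tilde\delta_p$ one only knows $h_\eps>-\gamma(\eps)$, and your one-line buffer-domination claim does not by itself force $w_\eps\notin U_r(2\gamma(\eps))$. The paper sidesteps this by a dichotomy: \emph{either} the trajectory leaves $U_r(2\gamma(\eps))$ and Case~1 fires, \emph{or} it remains in the shell $U_r(2\gamma(\eps))\setminus U_r(\gamma(\eps))$, in which case it never reaches $\Omega_\eps^-$ and Case~3 fires. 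Both alternatives give $s_\eps=\sigma+2\alpha$, and no comparison of $\gamma(\eps)$ against the geometric constants of $\tilde\Omega$ is needed. Adopting this dichotomy closes the gap with no extra work.
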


\begin{proof}
Let $(p_0, q_0)$ be stabilizing. According to Proposition \ref{p:stable},
$w_\eps(t_1)$ is $\eps$-close to $w_a(t_1)$ at the moment when $w_\eps$
exits $\Omega_0$. Therefore, $w_\eps(t_1) \in \Omega_\eps^-$, and Case 2
from the definition of $s_\eps$ takes place.

Let now $(p_0, q_0)$ be destabilizing. According to Proposition \ref{p:unstable},
there exists a moment $t_1$ such that $w_\eps(t_1) \not\in U_r(\gamma(\eps))$.
Thus, either the solution exits the set $U_r(2 \gamma(\eps))$ and Case 1
takes place, or it remains in the set $U_r(2 \gamma(\eps)) \setminus U_r (\gamma(\eps))$,
thus never entering $\Omega_\eps^-$, and Case 3 takes place.
\end{proof}

\begin{proposition} \label{p:s-u-v}
Let Case 4 from the definition of $s_\eps$ hold for $(p_0, q_0)$,
that is, there exists a time moment $t_2$ close to $\sigma$ such that
$h_\eps(t_2, p_0, q_0) = h_0$. Then $v(p_0, q_0)$ and
$u(p_\eps(t_2), q_\eps(t_2))$ are $\eps$-small, and $t_2$ is
$\eps$-close to $\sigma + u(p_0, q_0) - v(p_\eps(t_2), q_\eps(t_2))$.
\end{proposition}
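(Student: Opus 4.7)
The plan is to bootstrap from the results already established: use Propositions \ref{p:case3} and \ref{p:s-st-unst} to get the first smallness claim, then run a time-reversed copy of the whole argument from the endpoint to get the second smallness claim, and finally piece together a decomposition of $t_2-\tau$ using the quantitative closeness provided by Propositions \ref{p:heps-ha} and \ref{p:stable}.

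For $v(p_0,q_0)$ I exploit the contrapositive. Case 4 of the definition of $s_\eps$ is disjoint from Cases 1--3 by construction. Proposition \ref{p:s-st-unst} says that a stabilizing initial condition yields $s_\eps=\sigma-2\alpha$ (Case 2) and a destabilizing one yields $s_\eps=\sigma+2\alpha$ (Case 1 or 3); hence in Case 4 the point $(p_0,q_0)$ is neither stabilizing nor destabilizing, Propositions \ref{p:stable}--\ref{p:unstable} fail at it, and Proposition \ref{p:case3} forces $v(p_0,q_0)$ to be $\eps$-small. For $u(p_\eps(t_2),q_\eps(t_2))$ I apply the same machinery to the backward trajectory $\tilde w_\eps(s):=w_\eps(t_2-s)$, which solves the time-reversed system $\dot p=-P,\ \dot q=-Q,\ \eps\dot h=-H$. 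Under this reversal, $S_r$ becomes attractive and $S_a$ becomes repulsive, the curves $\Gamma_a$ and $\Gamma_r$ swap roles, and the local coordinates satisfy $\tilde u=-v$, $\tilde v=-u$. The Case 4 behaviour of the forward trajectory (it followed $S_r$ and then fell back to $h=h_0$) translates into the backward analog of Case 4 for $\tilde w_\eps$: the backward solution falls fast onto $S_r$, follows it toward $(p^*,q^*)$, crosses the turning line, and emerges onto $S_a$ to terminate at $(p_0,q_0,h_0)$. Applying the time-reversed versions of Propositions \ref{p:s-st-unst} and \ref{p:case3} then forces $\tilde v(p_\eps(t_2),q_\eps(t_2))=-u(p_\eps(t_2),q_\eps(t_2))$ to be $\eps$-small, giving the claim.

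The time estimate follows by decomposing $t_2-\tau$ into: (i) the initial relaxation onto $S_a$, of order $\eps\log(1/\eps)$ by Proposition \ref{p:heps-ha}; (ii) the slow traverse along $w_a$ from $(p_0,q_0)$ into an arbitrarily small neighborhood of $(p^*,q^*)$, which by Proposition \ref{p:stable} is $\eps$-close to the corresponding travel time of $w_a$ itself, and equals $-\tau+u(p_0,q_0)$ up to an $O(v(p_0,q_0))=O(\eps)$ correction from the fact that $(p_0,q_0)$ sits in an $\eps$-thin strip along $\Gamma_a$; (iii) the transition across the turning line near $(p^*,q^*)$, again $O(\eps\log(1/\eps))$ by the fast-variable analysis; (iv) the slow traverse along $w_r$ from $(p^*,q^*)$ to $(p_\eps(t_2),q_\eps(t_2))$, which by the backward analog of Proposition \ref{p:stable} is $\eps$-close to $\sigma-v(p_\eps(t_2),q_\eps(t_2))$; and (v) the final vertical fall from $\approx h_r$ to $h_0$, of order $\eps\log(1/\eps)$. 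Summing and rearranging gives $t_2\approx\sigma+u(p_0,q_0)-v(p_\eps(t_2),q_\eps(t_2))$ with $\eps$-small error.

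The main obstacle I expect is the bookkeeping in Step 3: getting the $+u$ and $-v$ signs requires tracking how the parametrizations $u,v$ are oriented relative to the flow directions of $w_a,w_r$ near $(p^*,q^*)$ and to which side of $(p^*,q^*)$ the endpoints $(p_0,q_0)$ and $(p_\eps(t_2),q_\eps(t_2))$ lie. Once the orientation is pinned down by the sign of $A$ and the transversality Assumption \ref{a:transv}, and once the contributions of the two fast phases are absorbed into the $\eps$-small error by the exponential estimates already derived in the proof of Proposition \ref{p:heps-ha}, the decomposition above yields the claimed asymptotic.
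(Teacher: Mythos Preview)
Your overall strategy matches the paper's: invoke Proposition~\ref{p:case3} for the $\eps$-smallness of $v(p_0,q_0)$, run the identical argument in reversed time for the $\eps$-smallness of $u(p_\eps(t_2),q_\eps(t_2))$, and obtain the time estimate by splitting the trajectory into its attractive-side and repulsive-side portions and applying Proposition~\ref{p:stable} and its backward counterpart to each half.

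There is, however, a concrete confusion in your decomposition. In phases (ii)--(iv) you repeatedly write ``$(p^*,q^*)$'' where you must mean the origin. The point $(p^*,q^*)$ is the intersection of $\Gamma_a$ and $\Gamma_r$, which is where the trajectory \emph{starts and ends}; the slow traverse carries it from there toward the origin (the turning point) along $\Gamma_a$ and back along $\Gamma_r$. More importantly, your phase (iii) is mischaracterised: there is no fast $O(\eps\log(1/\eps))$ event at the fold. The passage through $\Omega_0$ is entirely slow canard motion along the merged surface $S_a\cup S_r$, and its duration is already part of the slow-phase totals $-\tau+u$ and $\sigma-v$ that you assign to (ii) and (iv). So either phase (iii) has zero length, or you are double-counting; the ``fast-variable analysis'' you invoke does not apply here. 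This is exactly the bookkeeping hazard you anticipated.

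The paper avoids it cleanly by not introducing a transition phase at all: it fixes a single split time $\hat t$, defined as the unique moment with $q_\eps(\hat t)=0$ inside $\Omega_0$ (uniqueness comes from $Q>0$ there), and proves by a short compactness--contradiction argument, using Proposition~\ref{p:stable}, that $\hat t$ is $\eps$-close to $u(p_0,q_0)$. The backward-time half then gives $t_2-\hat t$ $\eps$-close to $\sigma-v(p_\eps(t_2),q_\eps(t_2))$, and addition finishes. Replacing your phases (ii)--(iv) with this two-piece split at $\hat t$ repairs the argument.
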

\begin{proof}
The fact that $v(p_0, q_0)$ is $\eps$-small is a direct corollary
of Proposition \ref{p:case3}, and $\eps$-smallness of
$u(p_\eps(t_2), q_\eps(t_2))$ is an equivalent statement in
backward time.

Consider now the vicinity $\Omega_0$ of the origin. The solution
$w_\eps$ enters it $\eps$-close to $\Gamma_a$, which has $q < 0$ at
the point of entry, and exits $\eps$-close to $\Gamma_r$, which has $q > 0$.
In this vicinity $Q > 0$, thus there exists a unique time moment
$\hat t$ when $q_\eps = 0$, $w_\eps \in \Omega_0$. It is sufficient
to show that $\hat t$ is $\eps$-close to $u(p_0, q_0)$; $\eps$-closeness
of $t_2 - \hat t$ to $\sigma - v(p_\eps(t_2), q_\eps(t_2))$ can be shown
by repeating the same steps for the system \eqref{e:main-pq} in
backward time.

Suppose the contrary, that is, there exists a sequence $\eps_n \to 0$,
$(p_n, q_n) \in \Pi(\alpha)$, such that $\abs{\hat t_n - u(p_0, q_0)} > t_3 > 0$.
Let $(\hat p, \hat q, \hat t)$ be a limit point of $(p_n, q_n, \hat t_n)$,
and let, for example, $\hat t \le u(p_0, q_0) - t_3$. Then,
according to Proposition \ref{p:stable} applied at the
moment $\hat t$, $q_\eps(\hat t, \hat p, \hat q)$ should be $\eps$-close
to $q^*(\hat t + u(p_0, q_0)) < 0$, however, for sufficiently
small $\eps_n$ and $(p_n, q_n, \hat t_n)$ sufficiently close to
$(\hat p, \hat q, \hat t)$, $q_\eps(\hat t_n, p_n, q_n) = 0$,
thus we arrive at a contradiction. In the case
$\hat t \ge u(p_0, q_0) + t_3$ we apply Proposition \ref{p:stable}
to an appropriate time moment $t_4 < u(p_0, q_0)$ where
$q^*(t_4 + u(p_0, q_0)) > -C$, and $q_\eps(t_4, p_n, q_n) < -C$;
this moment exists because $0 < Q_1 < Q(p, q, h) < Q_2$ in $\Omega$.
\end{proof}

Using the moment $s_\eps(p_0, q_0)$, we define a mapping
$W_\eps$ of the set $\overline{\Pi}(\alpha)$ into the plane $(p, q)$.
The definition is divided into the following two cases:

\smallskip
\noindent \textbf{Case 1.} If
\[
\sigma - 3 \alpha / 2 < s_\eps(p_0, p_0) < \sigma + 3 \alpha / 2,
\]
then
\[
W_\eps(p_0, p_0) = (p_\eps(s_\eps, p_0, q_0), q_\eps(s_\eps, p_0, q_0)).
\]
If we identify the plane $(p, q)$ with the two-dimensional subspace
\[
P_0 = \{ (p, q, h_0) \colon p, q \in \reals \}
\]
of the phase space of system \eqref{e:main-pq}, then in Case~1 the
value $W_\eps(p_0, q_0)$ coincides with the intersection of the
trajectory \eqref{e:sol-eps} with $P_0$, as long as the
corresponding intersection time is close to $\sigma$.

\smallskip
\noindent \textbf{Case 2.} If
\[
3 \alpha / 2 \le \abs{s_\eps(p_0, q_0) - \sigma} \le 2 \alpha,
\]
then
\[
W_\eps(p_0, q_0) = \frac{2 \alpha - \abs{s_\eps - \sigma}}{\alpha / 2}
        (p_\eps(s_\eps, p_0, q_0), q_\eps(s_\eps, p_0, q_0))
    + \frac{\abs{s_\eps - \sigma} - 3 \alpha / 2}{\alpha / 2} w^*(s_\eps).
\]
This means that $W_\eps(p_0, q_0)$ is a continuous convex combination of the
intersection of the trajectory \eqref{e:sol-eps} with $P_0$ and the point
$w^*(s_\eps)$ as long as the discrepancy $\abs{s_\eps - \sigma}$ between the
corresponding intersection moment $s_\eps$ and $\sigma$ in the the range from
$3 \alpha / 2$ to $2 \alpha$. Moreover, if the discrepancy equals $\alpha$, then the
definition is consistent with Case~1; if the intersection moment equals
$\sigma \pm 2 \alpha$, then $W_\eps$ coincides with $w^*(\sigma \pm 2
\alpha)$.

\begin{lemma} \label{l:w-continuous}
If $\eps$ is sufficiently small, then mapping $W_\eps(x_0, y_0)$ is
continuous with respect to $(x_0, y_0)$.
\end{lemma}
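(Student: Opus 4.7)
The plan is to reduce continuity of $W_\eps$ to two ingredients that are already available: continuity of $s_\eps(p_0, q_0)$ on $\overline{\Pi}$, established in Proposition \ref{p:s-cont}, and continuous dependence of the solutions of \eqref{e:main-pq} on their initial data. Both formulas defining $W_\eps$ in Case~1 and Case~2 are built out of the composition $(p_\eps(s_\eps(p_0,q_0), p_0, q_0), q_\eps(s_\eps(p_0,q_0), p_0, q_0))$ and the curve $w^*(s_\eps)$, so within either case separately the continuity of $W_\eps$ is inherited from these two facts. The only genuine issue is whether the piecewise definition has a jump at the interface $|s_\eps - \sigma| = 3\alpha/2$ between Case~1 and Case~2.

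Concretely, I would fix $(p_0, q_0) \in \overline{\Pi}$ and a sequence $(p_n, q_n) \to (p_0, q_0)$. By Proposition \ref{p:s-cont}, $s_n = s_\eps(p_n, q_n) \to s = s_\eps(p_0, q_0)$; since, from the four sub-cases defining $s_\eps$, the value always lies in $[\sigma - 2\alpha, \sigma + 2\alpha]$, the sequence $s_n$ stays in a fixed compact time interval. Continuous dependence of the flow of \eqref{e:main-pq} on initial data, applied on this interval, gives $(p_\eps(s_n, p_n, q_n), q_\eps(s_n, p_n, q_n)) \to (p_\eps(s, p_0, q_0), q_\eps(s, p_0, q_0))$, and the continuity of $w^*$ gives $w^*(s_n) \to w^*(s)$. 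The weights $\frac{2\alpha - |s_\eps - \sigma|}{\alpha/2}$ and $\frac{|s_\eps - \sigma| - 3\alpha/2}{\alpha/2}$ are continuous functions of $s_\eps$, hence of $(p_0, q_0)$, so the Case~2 formula is manifestly a continuous function of $(p_0, q_0)$, and the Case~1 formula is as well.

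The matching at the interface is a one-line check: at $|s_\eps - \sigma| = 3\alpha/2$ the Case~2 weights become $1$ in front of $(p_\eps(s_\eps), q_\eps(s_\eps))$ and $0$ in front of $w^*(s_\eps)$, reproducing the Case~1 value. Thus on the common boundary the two definitions agree, and since $s_\eps$ is continuous, any transition between the two cases must occur at such a boundary value. Concatenating these observations yields continuity of $W_\eps$ on all of $\overline{\Pi}$.

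The main obstacle is really bookkeeping rather than analysis: one must be certain that $s_\eps$ never leaves $[\sigma - 2\alpha, \sigma + 2\alpha]$ (so that no third case can arise) and that the two pieces do glue continuously. Both points follow directly from the construction: the four sub-cases defining $s_\eps$ explicitly clip the value to this interval, and the weights in Case~2 are specifically chosen so that the Case~1 formula is recovered at $|s_\eps - \sigma| = 3\alpha/2$. Once these checks are in place, the lemma follows by applying Proposition \ref{p:s-cont} and standard continuous dependence theorems.
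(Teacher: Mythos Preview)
Your proposal is correct and follows essentially the same approach as the paper, which simply states that the lemma ``follows from the definition of $W_\eps$ and Proposition \ref{p:s-cont}.'' You have spelled out the details the paper leaves implicit --- continuous dependence of the flow, continuity of $w^*$ and of the convex-combination weights, and the matching of the two cases at $|s_\eps - \sigma| = 3\alpha/2$ --- but the underlying idea is identical.
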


\begin{proof}
Follows from the definition of $W_\eps$ and Proposition \ref{p:s-cont}.
\end{proof}

The next lemma establish correspondence between the fixed points of
$W_\eps$ and the periodic solutions of \eqref{e:main-pq}.

\begin{lemma} \label{l:fixed-point}
Let $(\hat p, \hat q) \in \Pi(\alpha)$ be a fixed point of the mapping
$W_\eps$. Then the solution $w_\eps(t, \hat p, \hat q)$ is periodic.
\end{lemma}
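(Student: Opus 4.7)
The plan is to convert the fixed-point equation $W_\eps(\hat p, \hat q) = (\hat p, \hat q)$ into a statement that the full trajectory $w_\eps(\cdot, \hat p, \hat q)$ returns to its initial state $(\hat p, \hat q, h_0)$ after time $s_\eps - \tau$, and then invoke uniqueness for the autonomous system \eqref{e:main-pq} to conclude periodicity. The subtlety is that $W_\eps$ is defined piecewise: only Case~1 of its definition identifies $W_\eps(\hat p, \hat q)$ directly with the return point $(p_\eps(s_\eps), q_\eps(s_\eps))$, whereas Case~2 mixes it with $w^*(s_\eps)$ through a nontrivial convex combination. Accordingly, the bulk of the argument is to show that every fixed point $(\hat p, \hat q) \in \Pi(\alpha)$ in fact falls in Case~1 of the definition of $W_\eps$.

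To rule out Case~2, I would enumerate the cases in the definition of $s_\eps$. In Cases~1--3, and in the ``capped'' branches of Case~4, the value $s_\eps$ equals $\sigma \pm 2\alpha$, so the convex-combination coefficient vanishes and $W_\eps(\hat p, \hat q) = w^*(\sigma \pm 2\alpha)$. A short computation in $(u, v)$-coordinates gives $v(w^*(\sigma + 2\alpha)) = -2\alpha$ and $v(w^*(\sigma - 2\alpha)) = +2\alpha$, since flowing $w_r$ backwards from $w^*(\sigma + 2\alpha)$ by time $2\alpha$ reaches $(p^*, q^*) \in \Gamma_a$, and symmetrically. Since $|v| = 2\alpha > \alpha/2$, these points lie outside $\Pi(\alpha)$ and cannot equal $(\hat p, \hat q)$. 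It remains to exclude Case~4 of $s_\eps$ with $|t_2 - \sigma| \in [3\alpha/2, 2\alpha)$, which would still place us in Case~2 of $W_\eps$. Here Proposition~\ref{p:s-u-v} applies: $v(\hat p, \hat q)$ and $u(p_\eps(t_2), q_\eps(t_2))$ are $\eps$-small, so $(\hat p, \hat q)$ and the return point respectively lie $\eps$-close to $\Gamma_a$ and $\Gamma_r$. For the fixed-point equation to hold, the convex combination of the two near-$\Gamma_r$ points $(p_\eps(t_2), q_\eps(t_2))$ and $w^*(s_\eps)$ must equal the near-$\Gamma_a$ point $(\hat p, \hat q)$, which by the transversality of $\Gamma_a$ and $\Gamma_r$ at $(p^*, q^*)$ forces $(\hat p, \hat q)$ to lie in an $\eps$-neighborhood of $(p^*, q^*)$. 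Then $u(\hat p, \hat q)$ is also $\eps$-small, and Proposition~\ref{p:s-u-v} yields $|t_2 - \sigma|$ is $\eps$-small, in particular strictly below $3\alpha/2$, a contradiction.

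In Case~1 of $W_\eps$, the fixed-point equation becomes $(\hat p, \hat q) = (p_\eps(s_\eps, \hat p, \hat q), q_\eps(s_\eps, \hat p, \hat q))$, while Case~4 in the definition of $s_\eps$ simultaneously gives $h_\eps(s_\eps, \hat p, \hat q) = h_0$. Since the initial data is $w_\eps(\tau, \hat p, \hat q) = (\hat p, \hat q, h_0)$, we obtain $w_\eps(s_\eps) = w_\eps(\tau)$, and autonomy of \eqref{e:main-pq} together with uniqueness of solutions force $w_\eps(t + s_\eps - \tau, \hat p, \hat q) \equiv w_\eps(t, \hat p, \hat q)$, proving periodicity with period $s_\eps - \tau$. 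The hardest step will be the middle paragraph: verifying that the convex-combination range in Case~2 of $W_\eps$ cannot host fixed points requires combining the tight return estimate of Proposition~\ref{p:s-u-v} with the transversality from Assumption~\ref{a:transv}, and taking $\eps$ small enough depending only on $\alpha$.
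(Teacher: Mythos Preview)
Your overall strategy matches the paper's: reduce to showing that a fixed point must fall under Case~1 of the definition of $W_\eps$, then conclude periodicity from autonomy and uniqueness. Your handling of Cases~1--3 of $s_\eps$ and of the ``capped'' branches of Case~4 (where $s_\eps=\sigma\pm 2\alpha$ and $W_\eps=w^*(\sigma\pm 2\alpha)$ lands outside $\Pi(\alpha)$) is correct and essentially identical to the paper's.

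The gap is in your last step for the ``uncapped'' Case~4 with $|s_\eps-\sigma|\in[3\alpha/2,2\alpha)$. You argue that the fixed point lies $\eps$-close to $(p^*,q^*)$, hence $u(\hat p,\hat q)$ is $\eps$-small, and then invoke Proposition~\ref{p:s-u-v} to conclude $|t_2-\sigma|$ is $\eps$-small. But Proposition~\ref{p:s-u-v} gives $t_2-\sigma = u(\hat p,\hat q) - v(\bar p,\bar q) + o(1)$ with $(\bar p,\bar q)=(p_\eps(t_2),q_\eps(t_2))$; it controls $u(\bar p,\bar q)$, not $v(\bar p,\bar q)$. Nothing you have established bounds $v(\bar p,\bar q)$, so the inference ``$u(\hat p,\hat q)$ small $\Rightarrow$ $|t_2-\sigma|$ small'' does not follow.

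The paper closes this case differently and more directly. It does not first locate $(\hat p,\hat q)$ near $(p^*,q^*)$; instead it feeds the crude bound $|u(\hat p,\hat q)|\le \alpha/2$ (just from $(\hat p,\hat q)\in\Pi(\alpha)$) and the hypothesis $s_\eps-\sigma\ge 3\alpha/2$ into the time estimate of Proposition~\ref{p:s-u-v} to obtain $v(\bar p,\bar q)\le u(\hat p,\hat q)-(s_\eps-\sigma)+o(1)<-\alpha+o(1)$. Since also $v(w^*(s_\eps))=-(s_\eps-\sigma)\le -3\alpha/2$, the convex combination defining $W_\eps$ in Case~2 has $v$-coordinate below $-\alpha/2$ (up to $O(\alpha^2)$ curvature and $o(1)$ terms), whereas $v(\hat p,\hat q)$ is $\eps$-small by Proposition~\ref{p:s-u-v}. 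This is the contradiction. In short, the right use of Proposition~\ref{p:s-u-v} here is to extract information about $v(\bar p,\bar q)$ from the assumed size of $s_\eps-\sigma$, not the other way around.
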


\begin{proof}
If suffices to show that if $(\hat p, \hat q)$ is a fixed point, then Case 1
from the definition of $W_\eps$ holds, so that the value of $W_\eps$
corresponds to a point on the trajectory $w_\eps(t, \hat p, \hat q)$ and is
not adjusted, as in Case 2. Note that if Cases 1--3 from the definition of
$s_\eps$ take place, then $W_\eps(\hat p, \hat q) \not\in \Pi$. Thus,
there exists a moment $t_2$ close to $\sigma$ such that
$h_\eps(t_2, \hat p, \hat q) = h_0$. The same argument shows that
$s_\eps = t_2$, otherwise $W_\eps(\hat p, \hat q) \not\in \Pi$.

Suppose that Case 2 holds, and let, for example, $s_\eps(\hat p, \hat q) \ge
\sigma + 3 \alpha / 2$. Denote $(\bar p, \bar q) = w_\eps(s_\eps, \hat p, \hat q)$.
Proposition \ref{p:s-u-v} shows that $v(\hat p, \hat q)$ and $u(\bar p, \bar q)$
are $\eps$-small, and
\[
3 \alpha / 2 \le s_\eps(\hat p, \hat q) - \sigma = u(\hat x, \hat y) -
v(\bar x, \bar y) + \gamma(\eps),
\]
where $\gamma(\eps)$ denotes an $\eps$-small value. Recall that
$\abs{u(\hat x, \hat y)} \le \alpha / 2$. Thus,
\[
v(\bar x, \bar y) < - \alpha + \gamma(\eps) < - \alpha / 2.
\]
Note that the linear combination in the definition of $W_\eps$ moves the point
$(\bar p, \bar q)$ in the direction of the point $(0, - 2 \alpha)$ in
$(u, v)$-coordinates, thus further decreasing $v(\bar p, \bar q)$, therefore
we obtain $v(\hat p, \hat q) < -\alpha / 2$, which
is impossible because $v(\hat p, \hat q)$ must be $\eps$-small.
This contradiction proves that only Case 1 can hold for $(\hat p, \hat q)$.
\end{proof}

Finally, we calculate the rotation of the vector field
$\id - W_\eps$ on $\Pi(\alpha)$.

\begin{lemma} \label{l:rotation}
For sufficiently small $\eps$ the rotation $\gamma(\id - W_\eps, \Pi(\alpha))$
of the vector field $id - W_\eps$ at the boundary of the set $\Pi(\alpha)$
is defined by
\[
\gamma(\id - W_\eps, \Pi(\alpha)) = \sgn(A).
\]
\end{lemma}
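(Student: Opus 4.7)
My strategy is to pass to the $(u,v)$ chart introduced above, in which $\Pi(\alpha)$ is the square $\{\abs{u},\abs{v}<\alpha/2\}$, $\Gamma_a$ is locally $\{v=0\}$, $\Gamma_r$ is $\{u=0\}$, and $(p^*,q^*)$ sits at the origin. Writing $\tilde W_\eps$ for the conjugate of $W_\eps$ through this chart, the diffeomorphism invariance of isolated fixed point indices gives $\gamma(\id-W_\eps,\Pi(\alpha))=\gamma(\id-\tilde W_\eps,\Pi_{uv})$, so the computation reduces to one in the $(u,v)$ frame.

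The first step is to check that $\id-W_\eps$ has no zeros on $\partial\Pi(\alpha)$. Statement \ref{s:2} together with Proposition \ref{p:s-st-unst} forces $s_\eps\equiv\sigma-2\alpha$ on $R_+$ and $s_\eps\equiv\sigma+2\alpha$ on $R_-$; Case 2 of the definition of $W_\eps$ then collapses to the constants $w^*(\sigma\mp 2\alpha)$, whose $(u,v)$-coordinates are $(0,\pm 2\alpha)$, both lying outside $\overline{\Pi_{uv}}$. On the two sides $\{\abs{u}=\alpha/2\}$, Propositions \ref{p:stable}--\ref{p:case3} together with Proposition \ref{p:s-u-v} force the $u$-component of $\tilde W_\eps$ to be $o(1)$ as $\eps\to 0$ (either because Case 2 places the image on $\Gamma_r$, or because Case 1 returns a point $\eps$-close to $\Gamma_r$), so the first component of $\id-\tilde W_\eps$ on these sides equals $\pm\alpha/2+o(1)\ne 0$.

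The second step is to trace $\id-\tilde W_\eps$ around $\partial\Pi_{uv}$ counterclockwise. On $R_-$ the vector lies in the open upper half plane while its first component sweeps through $[-\alpha/2,\alpha/2]$; symmetrically, on $R_+$ it lies in the open lower half plane. On each side $\{u=\pm\alpha/2\}$ the first component stays pinned near $\pm\alpha/2$, and as $v_0$ crosses the $\eps$-thin Case 4 strip the trajectory's fate switches between stabilizing and destabilizing, so by continuity (Lemma \ref{l:w-continuous}) and Proposition \ref{p:s-u-v} the second component sweeps monotonically between the extremal values attained at the adjacent corners. The four arcs glue into a closed image curve encircling $0$ exactly once; the sense of winding is counterclockwise if $A>0$ (so that $R_+$ is the bottom side and $R_-$ the top), and clockwise if $A<0$ (when the roles swap, because $R_\pm=\{v=\mp\alpha/2\,\sgn A\}$). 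Hence $\gamma(\id-\tilde W_\eps,\Pi_{uv})=\sgn A$, which is the desired conclusion.

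The main delicacy is that in the singular limit $\eps\to 0$ the map $\tilde W_\eps$ becomes discontinuous across $\Gamma_a$, jumping between the two constants $w^*(\sigma\mp 2\alpha)$; for each fixed positive $\eps$ this jump is smoothed, by the clipping built into Cases 1--3 of $s_\eps$ and the convex interpolation of Case 2 of $W_\eps$, into a continuous transition along $\Gamma_r$ that traces out precisely the encircling arcs on the sides $\{\abs{u}=\alpha/2\}$. Once this picture is in place the sign bookkeeping across the two cases $\sgn A=\pm 1$ is a mechanical winding-number count.
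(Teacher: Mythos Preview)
Your argument follows the same route as the paper's: pass to the $(u,v)$ chart, use Statement~\ref{s:2} and Proposition~\ref{p:s-st-unst} to pin $W_\eps$ to the constants $w^*(\sigma\pm 2\alpha)$ on the sides $R_\mp$, and use Proposition~\ref{p:s-u-v} to get $u(W_\eps)=o(1)$ on the remaining sides $\{|u|=\alpha/2\}$. The paper then finishes more economically than your winding trace: from these same facts, $\id-W_\eps$ on $\partial\Pi$ is never anti-directed with the linear field $L_2(u,v)=(u,5v)$ (it agrees exactly on $R_\pm$ and shares the sign of the first component on $\{|u|=\alpha/2\}$), so the rotation equals $\gamma(L_2,\Pi)=\sgn A$ directly.

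One small point: your claim that on $\{|u|=\alpha/2\}$ the second component of $\id-\tilde W_\eps$ ``sweeps monotonically'' is not established by Proposition~\ref{p:s-u-v} and is in fact unnecessary. What you actually have is that the vector stays in the open right (resp.\ left) half-plane on those sides; that alone confines the continuous argument to an interval of length $\pi$, so the net angle change across each such side is forced to be the endpoint difference, with no extra turns possible. With that correction your direct count is valid and equivalent to the paper's homotopy argument.
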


\begin{proof}
Let for example,
\[
A > 0.
\]
Consider $R_-$ and $R_+$, the upper and lower sides of
the parallelogram $\Pi(\alpha)$. Propositions \ref{p:unstable} and
\ref{p:stable} imply that
\begin{alignat*}{2}
W_\eps(p, q) &= w^*(\sigma + 2\alpha), &\quad (p, q) &\in R_{-}, \quad \text{and} \\
W_\eps(p, q) &= w^*(\sigma - 2\alpha), &\quad (p, q) &\in R_{+}.
\end{alignat*}
In other words,
\begin{alignat}{2}
u(W_\eps(p, q)) &= 2\alpha,  &\quad (p, q) &\in R_{-} \label{e:f1} \\
u(W_\eps(p, q)) &= -2\alpha, &\quad (p, q) &\in R_{+} \label{e:f2}.
\end{alignat}
Also, Proposition \ref{p:s-u-v} implies that
\begin{equation} \label{e:f3}
\lim_{\eps \to 0} v(W_\eps(p, q)) = 0.
\end{equation}
The relationships \eqref{e:f1}--\eqref{e:f3} imply that in the coordinates
$(u, v)$ the mapping $W_\eps$ on the boundary of $\Pi$ is close to the linear
mapping $L_1(u, v) = (0, -4v)$, thus the mapping $\id - W_\eps$ is close to
(and therefore co-directed with) the linear mapping $L_2(u, v) = (u, 5v)$, and
the result follows from the properties of the rotation number.
\end{proof}

Lemma \ref{l:rotation} implies that the mapping $W_\eps$ has a
fixed point $(\hat p, \hat q)$ on the set $\Pi(\alpha)$, which defines a periodic
solution of \eqref{e:main-pq} according to Lemma \ref{l:fixed-point}.
Moreover, according to Proposition \ref{p:s-u-v}, both $v(\hat p, \hat q)$
and $u(\hat p, \hat q)$ are $\eps$-small, and consequently
$(\hat p, \hat q)$ is $\eps$-close to $(p^*, q^*)$, and also
the minimal period of the solution starting from $(\hat p, \hat q, h_0)$
is $\eps$-close to $\sigma - \tau$. Thus, we have proved the following Statement:

\begin{statement} \label{s:3d}
For any sufficiently small $\eps > 0$ there exists
a periodic solution of system \eqref{e:main-pq}, and thus \eqref{e:main_0},
that passes $\eps$-close to the point $(x^*, y^*)$. The minimal period
$T_{\min}$ of this solution approaches $\sigma - \tau$ as $\eps \to 0$.
\end{statement}

Consider now the system \eqref{e:main} where $\hat{X}, \hat{Y}$
satisfy \eqref{lambda} with a sufficiently small $\lambda > 0$ and
an initial condition $(p_0, q_0, h_0, z_0)$ with $(p_0, q_0) \in
\Pi(\alpha)$, $z_0 \in D$. By repeating the four cases in the
definition of the moment $s_\eps (p_0, q_0)$, we now define the
moment $s_{\eps,\lambda} (p_0, q_0, z_0)$ which has the same
properties as $s_\eps (p_0, q_0).$
Similarly, we define the mapping $W_{\eps,\lambda} (p_0, q_0, z_0)
\colon \Pi\times \bar{D} \to \reals^2$. For sufficiently small
$\eps, \lambda$ this mapping is continuous. Introduce also the
mapping $V_{\eps,\lambda} (p_0, q_0, z_0) \colon \Pi\times \bar{D}
\to \reals^d$ which is defined as follows. Let
$z(t;p_{0},q_{0},h_{0},z_{0})$ denote the $z$-component of the
solution of the system \eqref{e:main} at the corresponding initial
conditions. We define $V_{\eps,\lambda} (p_0, q_0, z_0)$ as closest
to $z(s_{\eps,\lambda} (p_0, q_0, z_0); p_0, q_0, h_0, z_0)$ point
which belongs to the ball of the radius $C \alpha$ centred at
$S_{\sigma - \tau}(z_{0})$ with an appropriate constant $C$.

By construction for small $\alpha, \eps, \lambda$ the vector field
\begin{equation} \label{s}
\id - (W_{\eps, \lambda}, V_{\eps, \lambda})
\end{equation}
is not anti-directed to the vector field
\[
\id - (W_{\eps}, S_{\sigma - \tau})
\]
at the boundary of the domain  $\Pi \times D$. Moreover for small
$\alpha, \eps, \lambda$ each singular point of the field \eqref{s}
generates a required periodic solution. It remains to apply the
product theorem, by which
\[
\gamma(\id - (W_{\eps,\lambda}, V_{\eps,\lambda}), \Pi \times
\bar{D}) = \gamma(\id - S_{\sigma - \tau}, D) \sgn(A) \not= 0.
\]

Thus, Theorem \ref{t:main} is proved.

\subsection{Proof of Theorem \ref{t:chaos-k}}

Here we outline the proof of the Theorem \ref{t:chaos-k} for the case $K = 2$.
It shares many parts of the results from the previous subsection,
so we will only describe the main changes. The proof for arbitrary
values of $K$ is obtained by appropriately adjusting the definitions
of $s^K_\eps$ and $W^K_\eps$ below.

First, we introduce the local coordinates $u_i, v_i$ in the vicinities
of the points $x^*(\tau_i)$, and define the `rectangular' sets
$\Pi_i(\alpha) = \{ (u_i, v_i) \colon \abs{u_i}, \abs{v_i} < \alpha_2 \}$.
The coordinates $u_i, v_i$ can be extended onto a vicinity
$\Omega^K \supset \Pi_i$ of the curve $\Gamma_r$ in such a way
that the transformations $h_i \colon (u_i, v_i) \mapsto (p, q)$
are homeomorphisms. In the coordinates $u_i, v_i$ all the sets
$\Pi_i$ have the same representation $\Pi^* = \{ (u, v) \colon
\abs{u}, \abs{v} < \alpha / 2 \}$.

Suppose that $\sigma_1 < \sigma_2$. Consider a trajectory $w_\eps(t, p_0, q_0)$
of the system \eqref{e:main-pq} with the initial data $(p_0, q_0, h_{0, i}) \in \Pi_i$,
$i = 1, 2$, where $h_{0, i} = (h^*(\tau_i) + h^*(\sigma_i)) / 2$.
Define the time moment $s^K_\eps$ in the following way:

\begin{enumerate}
\item If $h_\eps(t_1, p_0, q_0) \not\in U_r(2 \gamma(\eps))$ for some
$t_1 < \sigma_2 + 3 \alpha$, then
$s^K_\eps(p_0, q_0) = \sigma_2 + 2 \alpha$.
\item If $w_\eps(t_1, p_0, q_0) \in \Omega_\eps^-$ for some
$t_0 \le t_1 < \sigma_1 - 3 \alpha$, where $t_0$
the moment when $w_\eps$ exits $\Omega_0$, then
$s^K_\eps(p_0, q_0) = \sigma_1 - 2 \alpha$.
\item If $w_\eps \not\in \Omega_\eps^-$ for $t_0 \le t \le \sigma_2 + 3 \alpha$,
then $s^K_\eps(p_0, q_0) = \sigma_2 + 2 \alpha$.
\item Otherwise, the solution begins to fall back to
the attractive part of $S_0$ at the moment $t_1 \in
[\sigma_1 - 3 \alpha, \sigma_2 + 3 \alpha]$.
Consider the following two subcases:
\begin{enumerate}
\item If $t_1 \in [\sigma_j - 3 \alpha, \sigma_j + 3 \alpha]$,
let $s^K_\eps(p_0, q_0)$ be the first moment $t_2$ after
$t_1$ such that $h_\eps(t_2, p_0, q_0) = h_{0, j}$, or $\sigma_j + 2 \alpha$,
whichever the smallest, or $\sigma_j - 2 \alpha$,
whichever the greatest.
\item Otherwise, let $s^K_\eps(p_0, q_0) = (\sigma_2 - \sigma_1 - 4 \alpha)
(t_1 - \sigma_1 - 3 \alpha) / (\sigma_2 - \sigma_1 - 6 \alpha) + \sigma_1 + 2 \alpha$.
\end{enumerate}
\end{enumerate}

This moment in time is continuous with respect to $(p_0, q_0)$.
Let us now introduce the Poincar\'e map of the system \eqref{e:main-pq}.

\begin{definition}
If Case 4(a) above holds for $s^K_\eps$, and $h_\eps(s^K_\eps, p_0, q_0) = h_{0, j}$,
then the Poincar\'e map ${\mathcal P}_{pq}$ is defined in the point $(p_0, q_0)$ by
\[
{\mathcal P}_{pq}(p_0, q_0) = (p_\eps(s^K_\eps, p_0, q_0), q_\eps(s^K_\eps, p_0, q_0)).
\]
\end{definition}

Now define the mapping $W^K_\eps \colon \bigcup_i \overline{\Pi}_i \to \reals^2$:

\smallskip
\noindent \textbf{Case 1.} If
\[
\sigma_1 - 3 \alpha / 2 < s^K_\eps(p_0, p_0) < \sigma_2 + 3 \alpha / 2,
\]
then
\[
W^K_\eps(p_0, p_0) = (p_\eps(s^K_\eps, p_0, q_0), q_\eps(s^K_\eps, p_0, q_0)).
\]

\smallskip
\noindent \textbf{Case 2.} If
\[
\sigma_1 - 2 \alpha \le s^K_\eps(p_0, q_0) \le \sigma_1 - 3 \alpha / 2,
\]
then
\[
W^K_\eps(p_0, q_0) = \frac{2 \alpha - \abs{s^K_\eps - \sigma_1}}{\alpha / 2}
        (p_\eps(s^K_\eps, p_0, q_0), q_\eps(s^K_\eps, p_0, q_0))
    + \frac{\abs{s^K_\eps - \sigma_1} - 3 \alpha / 2}{\alpha / 2} w^*(s^K_\eps).
\]

\smallskip
\noindent \textbf{Case 3.} If
\[
\sigma_2 + 3 \alpha / 2 \le s^K_\eps(p_0, q_0) \le \sigma_2 + 2 \alpha,
\]
then
\[
W^K_\eps(p_0, q_0) = \frac{2 \alpha - \abs{s^K_\eps - \sigma_2}}{\alpha / 2}
        (p_\eps(s^K_\eps, p_0, q_0), q_\eps(s^K_\eps, p_0, q_0))
    + \frac{\abs{s^K_\eps - \sigma_2} - 3 \alpha / 2}{\alpha / 2} w^*(s^K_\eps).
\]

\smallskip
The mapping $W^K_\eps$ is continuous with respect to $(p_0, q_0)$, and
the following counterpart of the Lemma \ref{l:fixed-point} holds:

\begin{lemma} \label{l:pmap-defined}
Let $(p_0, q_0) \in {\overline \Pi}_i$ and
$W_\eps(p_0, q_0) = (\hat p, \hat q) \in {\overline \Pi}_j$. Then
the Poincar\'e map ${\mathcal P}_{pq}$ is defined in the point $(p_0, q_0)$,
and ${\mathcal P}_{pq} (p_0, q_0) = (\hat p, \hat q)$.
\end{lemma}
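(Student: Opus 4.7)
The plan is to mirror the argument of Lemma \ref{l:fixed-point}, ruling out every branch in the definitions of $s^K_\eps$ and $W^K_\eps$ except the pair that coincides with ${\mathcal P}_{pq}$. The hypothesis $W^K_\eps(p_0, q_0) \in \overline{\Pi}_j$ is restrictive enough that only Case 4(a) of $s^K_\eps$ together with Case 1 of $W^K_\eps$ survive; there is a slight notational mismatch in the lemma statement, which writes $W_\eps$ for what the subsection constructs as $W^K_\eps$.

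First I would eliminate Cases 1, 2 and 3 in the definition of $s^K_\eps$. Each of them sets $s^K_\eps$ equal to $\sigma_1 - 2\alpha$ or $\sigma_2 + 2\alpha$, so by the corresponding branches of $W^K_\eps$ (Cases 2 and 3) the image collapses to $w^*(\sigma_1 - 2\alpha)$ or $w^*(\sigma_2 + 2\alpha)$. These points lie on $w^*$ at parameter distance $2\alpha$ from both $x^*(\tau_1)$ and $x^*(\tau_2)$, hence outside every $\overline{\Pi}_\ell$ once $\alpha$ is small enough, contradicting $W^K_\eps(p_0, q_0) \in \overline{\Pi}_j$.

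Next I would rule out Case 4(b). There $s^K_\eps$ is linearly interpolated into $(\sigma_1 + 2\alpha, \sigma_2 - 2\alpha)$, a window on which Case 1 of $W^K_\eps$ applies, so the image equals $(p_\eps(s^K_\eps), q_\eps(s^K_\eps))$. But in this subcase the trajectory has fallen back to an $\eps$-neighbourhood of $S_a$ at some $t_1$ in the middle gap; by Proposition \ref{p:stable} applied from $t_1$ in forward time the $(p,q)$-projection then follows the attractive slow flow at a phase that avoids every intersection point $x^*(\tau_\ell)$, so it remains at positive distance from each $\Pi_\ell$ for $\eps$ small. This again contradicts the hypothesis, so Case 4(a) of $s^K_\eps$ holds and there is a unique $k \in \{1,2\}$ with $h_\eps(s^K_\eps, p_0, q_0) = h_{0,k}$.

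It remains to show that $W^K_\eps$ falls into Case 1 of its own definition. If Case 3 held instead, with $s^K_\eps - \sigma_2 \in [3\alpha/2, 2\alpha]$, then Proposition \ref{p:s-u-v} gives $v_2(p_\eps(s^K_\eps), q_\eps(s^K_\eps))$ $\eps$-small and
\[
s^K_\eps - \sigma_2 = u_2(p_0, q_0) - v_2(p_\eps(s^K_\eps), q_\eps(s^K_\eps)) + \gamma(\eps);
\]
combined with $|u_2(p_0, q_0)| \le \alpha/2$ this places the anchor $w^*(s^K_\eps)$ at $u_2 \ge 3\alpha/2$, and the convex combination then yields $u_2(W^K_\eps(p_0, q_0)) > \alpha/2$, impossible in $\overline{\Pi}_j$. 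A symmetric argument handles Case 2. Therefore Case 1 of $W^K_\eps$ is the only survivor, which by definition gives
\[
W^K_\eps(p_0, q_0) = (p_\eps(s^K_\eps, p_0, q_0), q_\eps(s^K_\eps, p_0, q_0)) = {\mathcal P}_{pq}(p_0, q_0),
\]
and the index $k$ must equal $j$ because the image $(\hat p, \hat q)$ is required to lie in $\overline{\Pi}_j$ and on the slice $h = h_{0,k}$. I expect the main obstacle to be the exclusion of Case 4(b): one must combine Proposition \ref{p:stable} with the transversality at each $\tau_\ell$ and with the gap condition $\sigma_2 - \sigma_1 \gg \alpha$ to ensure that a trajectory falling back in the middle gap cannot drift into any $\Pi_\ell$ over the prescribed time window.
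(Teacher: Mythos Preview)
Your plan is exactly what the paper intends: Lemma \ref{l:pmap-defined} is presented as ``the counterpart of Lemma \ref{l:fixed-point}'' with no separate proof, so the case-by-case elimination you describe---including the extra subcase 4(b), which is indeed the new ingredient in the $K\ge 2$ setting---is the expected argument.

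There is, however, a coordinate slip in your exclusion of Case 3 of $W^K_\eps$. Proposition \ref{p:s-u-v} makes $u_k$ (not $v_k$) of the endpoint $(\bar p,\bar q)=(p_\eps(s^K_\eps),q_\eps(s^K_\eps))$ $\eps$-small, and the anchor $w^*(s^K_\eps)$ lies on $\Gamma_r$, so $u_2(w^*(s^K_\eps))=0$, not $\ge 3\alpha/2$. Following the paper's own computation in Lemma \ref{l:fixed-point}, the correct estimate is on the $v_2$-coordinate: from $s^K_\eps-\sigma_2\ge 3\alpha/2$ and $|u_i(p_0,q_0)|\le\alpha/2$ (note the index $i$, since $(p_0,q_0)\in\overline\Pi_i$) one gets $v_2(\bar p,\bar q)<-\alpha/2$, while the anchor has $v_2(w^*(s^K_\eps))=-(s^K_\eps-\sigma_2)\le-3\alpha/2$; the convex combination therefore has $v_2<-\alpha/2$ and lies outside every $\overline\Pi_j$. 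With this correction your argument goes through.
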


To prove chaoticity of ${\mathcal P}_{pq}$, we use the notion of
$(V, W)$-hyperbolicity from \cite{Homoclinic}.

Fix two positive integers $d_u$, $d_s$ with $d_u + d_s = d$.
Let $V$ and $W$ be bounded, open and convex product-sets
\[
V = V^{(u)} \times V^{(s)} \subset \reals^{d_{u}}\times\reals^{d_{s}},
\quad
W = W^{(u)} \times W^{(s)} \subset \reals^{d_{u}}\times\reals^{d_{s}},
\]
satisfying the inclusions $0 \in V, W$, and let
$g \colon \overline{V} \to \reals^{d_{u}}\times\reals^{d_{s}}$
be a continuous mapping. It is convenient to treat $g$
as the pair $(g^{(u)}, g^{(s)})$ where $g^{(u)} \colon V \to \reals^{d_{u}}$
and $g^{(s)} \colon V \mapsto \reals^{d_{s}}$.

\begin{definition}
The mapping $g$ is \emph{$(V, W)$-hyperbolic}, if the equations
\begin{equation} \label{e:hyp1}
g^{(u)}\left(\partial V^{(u)} \times \overline{V}^{(s)}\right)
\bigcap \overline{W}^{(u)} = \emptyset,
\qquad
g(\overline{V}) \bigcap \left(\overline{W}^{(u)} \times
  (\reals^{d_{s}} \setminus W^{(s)}) \right) = \emptyset
\end{equation}
hold, and
\begin{equation} \label{e:hyp2}
\gamma(g^{(u)}, V^{(u)}) \not= 0.
\end{equation}
Here $\overline{S}$ denotes the closure of a set $S$.
\end{definition}

The first relationship \eqref{e:hyp1} means geometrically
that the image of the `$u$-boundary' $\partial V^{(u)} \times \overline{V}^{(s)}$
of $V$ does not intersect the infinite cylinder
$C = \overline{W}^{(u)} \times \reals^{d_{s}}$;
analogously, the second equality \eqref{e:hyp2} means that the image
of the whole set $g(V)$ can intersect the cylinder $C$
only by its central fragment $\overline{W}^{(u)} \times W^{(s)}$.
Thus the first equation \eqref{e:hyp1} means that the mapping
expands in a rather weak sense along the first coordinate in the
Cartesian product $\reals^{d_{u}} \times \reals^{d_{s}}$,
whereas the second one confers a type of contraction along
the second coordinate (the indices `$(u)$' and `$(s)$' refer
to the adjectives `stable' and `unstable').

The following theorem follows from Corollary 3.1 in \cite{Homoclinic}:

\begin{theorem} \label{t:chaos}
Let $f \colon \reals^{d} \to \reals^{d}$ be a continuous mapping.
Let there exist homeomorphisms $h_{i}$ and product sets
$V_{i}$ such that $h^{-1}_{j} f h_{i}$ is $(V_{i}, V_{j})$-hyperbolic
for all $i, j$, and let the family ${\mathcal U}$ of connected components
of the union set $\bigcup h_{i}(V_{i})$ have more than one element.
Then the mapping $f$ is $\mathcal U$-chaotic.
\end{theorem}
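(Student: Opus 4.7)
The plan is to reduce the theorem to the framework of Corollary~3.1 in \cite{Homoclinic}, where $\mathcal U$-chaoticity is derived from a system of covering relations between product sets. The homeomorphisms $h_i$ play only the role of a coordinate change: passing to the conjugated maps $g_{ij} = h_j^{-1} f h_i \colon V_i \to \reals^d$ one may assume that the dynamics sends the product set $V_i$ into a neighborhood of $V_j$, with $(V_i, V_j)$-hyperbolicity supplied for every ordered pair $(i,j)$.

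The first step is to realize the symbolic itinerary property (p1). For a one-sided sequence $\omega \in \Omega^R_m$ and $n \ge 1$ I would define
\[
K_n^{\omega} = \bigl\{ x \in V_{\omega_0} \colon g_{\omega_{k-1}\omega_k} \circ \cdots \circ g_{\omega_0 \omega_1}(x) \in V_{\omega_k},\ k = 1,\dots,n \bigr\}.
\]
The first relation in \eqref{e:hyp1} prevents the unstable boundary from ever entering the next ``cylinder'', while the non-vanishing degree condition \eqref{e:hyp2} forces the unstable projection of $K_n^\omega$ to cover $V_{\omega_0}^{(u)}$ in the degree-theoretic sense; an inductive application of the product theorem for degrees shows that $K_n^\omega$ is non-empty. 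The sets $K_n^\omega$ form a nested sequence of compacts, so their intersection produces a point whose $f$-orbit realizes $\omega$. The required compact invariant set $S$ is then the closure of the union of these points over all admissible $\omega$.

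For property (p2), given a $p$-periodic sequence $\omega$, I would apply the same degree argument to the finite composition $G = g_{\omega_{p-1}\omega_0} \circ \cdots \circ g_{\omega_0\omega_1}$ on $V_{\omega_0}$: iterated $(V_i, V_j)$-hyperbolicity guarantees $\gamma(\id - G, V_{\omega_0}) \ne 0$, so $G$ has a fixed point whose $f$-orbit is the desired $p$-periodic trajectory. For property (p3), the Li--Yorke scrambled set is obtained by a standard symbolic construction: one selects an uncountable family of itineraries that agree on arbitrarily long initial blocks in infinitely many windows (forcing the $\liminf$ closeness through the contraction encoded in the second relation of \eqref{e:hyp1}) while differing on infinitely many other blocks (forcing the index $\limsup$ condition).

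The main obstacle is exactly the translation from the purely topological $(V_i, V_j)$-hyperbolicity to the quantitative statement (p3), namely that itinerary-close points become phase-space-close. This is precisely what is packaged in Corollary~3.1 of \cite{Homoclinic}, so the practical proof reduces to verifying that, after the conjugation by the $h_i$'s, our hypotheses coincide with those stated there; the family $\mathcal U$ of connected components of $\bigcup_i h_i(V_i)$ serves as the alphabet, and the chaoticity follows without further work.
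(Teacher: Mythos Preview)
Your proposal is correct and matches the paper's approach exactly: the paper does not prove this theorem directly but simply states that it ``follows from Corollary~3.1 in \cite{Homoclinic}'', which is precisely the reduction you carry out. Your additional sketch of how the covering relations yield properties (p1)--(p3) is extra detail the paper omits, but the core argument is the same citation.
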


The following lemma establishes the $(\Pi_i, \Pi_j)$-hyperbolicity
of $W^K_\eps$.

\begin{lemma} \label{l:hyperbolic}
For every sufficiently small $\eps$ the mappings ${\hat W^K_\eps}_{i j} = h_j^{-1} W^K_\eps h_i$
are $(\Pi_*, \Pi_*)$-hyperbolic.
\end{lemma}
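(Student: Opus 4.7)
The plan is to interpret $\Pi_* = V^{(u)} \times V^{(s)}$ with $V^{(u)} = V^{(s)} = (-\alpha/2, \alpha/2)$, where $v$ plays the role of the unstable coordinate and $u$ plays the role of the stable coordinate. This identification is dictated by the horseshoe-type geometry already established: by Proposition \ref{p:s-u-v}, a trajectory starting at $h_i(u_0, v_0) \in \Pi_i$ returns to $\bigcup_j \Pi_j$ only when $v_0$ lies in an $\eps$-small neighborhood of $0$ (thin in $v$, full in $u$), and the returning coordinates $(u_1, v_1)$ have $u_1$ $\eps$-small (thin in $u$, full in $v$). Thus the map $\hat W^K_{\eps,ij}$ expands in $v$ and contracts in $u$. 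Write $\hat W^K_{\eps,ij} = (g^{(u)}, g^{(s)})$ with $g^{(u)}$ the $v$-component and $g^{(s)}$ the $u$-component of the image.

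First I would verify condition \eqref{e:hyp1}. On the unstable boundary $v_0 = \pm\alpha/2$ of $\Pi_*$, the trajectory through $h_i(u_0, \pm\alpha/2)$ is either stabilizing (for one sign of $v_0$) or destabilizing (for the other), by the argument of Statement \ref{s:2} transplanted to $\Pi_i$. In either case the moment $s^K_\eps$ falls into Case 2 or Case 3 of its definition, pinning $W^K_\eps$ to $w^*(\sigma_1 - 2\alpha)$ or $w^*(\sigma_2 + 2\alpha)$. These two points sit on $\Gamma_r$ at $u$-distance of order $2\alpha$ from the centers of all $\Pi_j$, so their pre-images under $h_j^{-1}$ have $|v| > \alpha/2$; the first inclusion in \eqref{e:hyp1} follows. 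The second inclusion in \eqref{e:hyp1} asserts that whenever the image has $v_1 \in [-\alpha/2, \alpha/2]$ (the trajectory actually returns near $\Pi_j$), then Case 1 or Case 4(a) of the $s^K_\eps$ definition must hold, and Proposition \ref{p:s-u-v} forces $u_1$ to be $\eps$-small, in particular inside $(-\alpha/2, \alpha/2)$.

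To verify the degree condition \eqref{e:hyp2}, fix any $u_0 \in V^{(s)}$ and consider the one-dimensional slice $g^{(u)}(\cdot, u_0) \colon (-\alpha/2, \alpha/2) \to \reals$. By the preceding paragraph, $g^{(u)}(-\alpha/2, u_0)$ equals the $v$-component of $h_j^{-1}(w^*(\sigma_1 - 2\alpha))$, which is strictly less than $-\alpha/2$, and $g^{(u)}(+\alpha/2, u_0)$ is strictly greater than $+\alpha/2$ (with sign correctly chosen by orienting the $h_j$ consistently). Hence the vector field $g^{(u)}(\cdot, u_0)$ has rotation $\pm 1$ on $V^{(u)}$, and in particular $\gamma(g^{(u)}, V^{(u)}) \neq 0$. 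This is precisely the multi-intersection analogue of Lemma \ref{l:rotation}.

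The main obstacle I expect lies in the careful bookkeeping needed for the boundary argument: one must choose the extensions of the local charts $h_j$ so that $w^*(\sigma_1 - 2\alpha)$ and $w^*(\sigma_2 + 2\alpha)$ genuinely lie in the domain of $h_j^{-1}$ and have a signed $v$-coordinate exceeding $\alpha/2$ in absolute value. This amounts to arranging that the $v$-axis of each $h_j$ is co-directed with $\dot w^*$ along $\Gamma_r$ and that the extension $\Omega^K$ covers the arc $w^*([\sigma_1 - 2\alpha, \sigma_2 + 2\alpha])$; this forces $\alpha$ to be small relative to the geometry fixed by Assumption \ref{a:intersect-k} and Assumption \ref{a:transv-k}. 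A secondary technical point is the transitional sub-case 4(b) in the definition of $s^K_\eps$: the linear interpolation in $t_1$ ensures continuity, but one must check that it does not push $W^K_\eps$ back into $\Pi_j$ in a way that contaminates the sign of $g^{(u)}$ at the boundary, which again is secured once $\alpha$ is taken sufficiently small compared with $\sigma_2 - \sigma_1$.
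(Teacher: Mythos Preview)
The paper states this lemma without proof, relying on the reader to transport the single-intersection estimates (Statements~\ref{s:1}--\ref{s:3}, Propositions~\ref{p:stable}--\ref{p:s-u-v}, and the rotation computation in Lemma~\ref{l:rotation}) to each pair $(\Pi_i,\Pi_j)$; your proposal does exactly this and is the intended argument. The identification of $v$ as the unstable coordinate and $u$ as the stable one is confirmed at the end of the paper's proof of Theorem~\ref{t:chaos-k} (``contracting along the $(u_i,z)$ coordinates and expanding along the $v_i$ coordinate'').

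One small slip to fix: the image points $w^*(\sigma_1-2\alpha)$ and $w^*(\sigma_2+2\alpha)$ lie on $\Gamma_r$, so their $u_j$-coordinate is $0$, not ``of order $2\alpha$''. What you need is their $v_j$-coordinate, which (since $v_j$ is time along the repelling flow to $\Gamma_a$, and on $\Gamma_r$ this is just $\sigma_j-t$) equals $\sigma_j-\sigma_1+2\alpha\ge 2\alpha$ and $\sigma_j-\sigma_2-2\alpha\le -2\alpha$ respectively. This gives $|g^{(u)}|\ge 2\alpha>\alpha/2$ on the $v$-boundary, and your degree argument then goes through with the sign depending on $\sgn A_i$ at the source chart.
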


Lemma \ref{l:hyperbolic} shows that conditions of the Theorem
\ref{t:chaos} are satisfied, thus the mapping $W_\eps$ is $\mathcal
U$-chaotic. The set $S$ in our definition of chaos is invariant,
thus on this set $W_\eps$ coincides with ${\mathcal P}_{pq}$ by
virtue of Lemma \ref{l:pmap-defined}. This establishes the
chaoticity of ${\mathcal P}_{pq}$.

Finally, we consider the full system \eqref{e:main} and define the
time moment $s^K_{\eps, \lambda} (p_0, q_0, z_0)$, the Poincar\'e
map ${\mathcal P} (p_0, q_0, z_0) \colon$ and the mapping
$W^K_{\eps, \lambda} (p_0, q_0, z_0) \colon \bigcup_i
\overline{\Pi}_i \times \bar D_i \to \reals^2$. These definitions
repeat almost literally those of $s^K_\eps$, ${\mathcal P}_{pq}$ and
$W^K_\eps$ with appropriate modifications.

The conditions of the Theorem require that $S_{\sigma_j - \tau_i}
D_i \subset D_j$, which together with Lemma \ref{l:hyperbolic} imply
that mapping
\[
(W^K_{\eps, \lambda}, z(s_{\eps, \lambda} (p_0, q_0, z_0); p_0, q_0,
z_0)),
\]
which is contracting along the $(u_i, z)$ coordinates and expanding
along the $v_i$ coordinate, is $(\Pi_* \times D_i, \Pi_* \times
D_j)$-hyperbolic for all $i, j$. This implies that the Poincar\'e
map ${\mathcal P}$ is chaotic. Thus, Theorem \ref{t:chaos-k} is
proved.

\section{A corollary of the Poincar\'e-Bendixson theorem
and periodic canards} \label{twod}

Consider the differential equation
\begin{eqnarray}\label{e1}
\dot \bx=\bff(\bx, a)
\end{eqnarray}
with a smooth $\bff$.  Here $\bx=(x,y)\in \R^{2}$, and
$a\in[a_{-},a_{+}]$ is a parameter. Let $\Gamma$ be a Jordanian
curve which bounds the open domain $D$. Suppose that for any
$a\in[a_{-},a_{+}]$ there exists a unique equilibrium $\be_{a}\in
D\bigcup\Gamma$, and
\begin{equation}\label{det}
 \det J(\be_{a})>0, \quad a_{-}\le a\le a_{+},
 \end{equation}
 where $J$ denotes the
Jacobian. We also suppose that
\begin{equation}\label{tr}
 \tr J(\be_{a_{-}})\cdot \tr
J(\be_{a_{+}}) <0,
\end{equation}
where $\tr J$ stands for the trace of the Jacobian.
\begin{proposition}\label{pr1}
 Let system \eqref{e1} has no
cycles confined in $D\bigcup \Gamma$ for $a=a_{-},a_{+}.$ Then for
some $a\in (a_{-}, a_{+})$ there exists a cycle of system \eqref{e1}
which is confined in $D\bigcup \Gamma$, and which touches $\Gamma$.
\end{proposition}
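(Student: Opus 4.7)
The strategy is to combine an Andronov--Hopf bifurcation at the parameter where $\tr J(\be_a)$ vanishes with a Poincar\'e--Bendixson continuation argument. Without loss of generality assume $\tr J(\be_{a_-}) < 0 < \tr J(\be_{a_+})$; by continuity there is $a^{*} \in (a_{-}, a_{+})$ with $\tr J(\be_{a^{*}}) = 0$, and together with $\det J(\be_{a^{*}}) > 0$ this places $\be_{a^{*}}$ in an Andronov--Hopf configuration with a pair of purely imaginary eigenvalues. For $a$ slightly past $a^{*}$ (on the appropriate side), $\be_a$ becomes a repelling focus. Since $\det J(\be_a) > 0$ excludes saddles throughout the parameter range, the Poincar\'e--Bendixson theorem forces the $\omega$-limit of any outward-spiralling trajectory that stays in $D \cup \Gamma$ to be a closed orbit. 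Hence a cycle in $D \cup \Gamma$ exists for $a$ in some one-sided neighborhood of $a^{*}$.

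Next I would continue this cycle as $a$ moves toward $a_{+}$: set $a_{c}$ to be the supremum of those $a \ge a^{*}$ for which a cycle strictly inside $D$ exists at parameter $a$, choose parameters $a_{n} \to a_{c}$ with cycles $L_{n} \subset D$, and pass to a Hausdorff-limit compact invariant set $L^{*}$ at parameter $a_{c}$. Three degenerations are a priori possible for $L^{*}$: collapse to $\be_{a_{c}}$, formation of a homoclinic or heteroclinic graph, or a cycle touching $\Gamma$. Collapse requires $\tr J(\be_{a_{c}}) = 0$, giving another Hopf point from which the continuation can be restarted; graph limits are impossible because $\det J(\be_{a}) > 0$ prevents any saddle from existing; the only remaining option is a cycle touching $\Gamma$. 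The no-cycle hypotheses at $a_{\pm}$ ensure that $a_{c} \in (a_{-}, a_{+})$, yielding the required parameter and cycle.

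The main obstacle is that the family of cycles need not be monotone in $a$: saddle--node bifurcations of cycles can create or destroy pairs of cycles away from $\Gamma$, so the collection of cycles at varying $a$ is better viewed as a topological continuum than a parametrized curve. The argument must therefore select, by a compactness argument in the Hausdorff topology, a connected component of this continuum starting at the Hopf point and follow it through any internal bifurcations until it terminates by touching $\Gamma$; the other two degenerations serve as auxiliary steps that restart rather than terminate the continuation. A secondary subtlety is ensuring that the Hausdorff limit $L^{*}$ is a genuine simple closed curve rather than a pinched or multiply-covered object, which should be controlled by the smoothness of $\bff$ together with the fact that tangencies of the flow with $\Gamma$ are isolated.
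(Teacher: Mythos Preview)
Your strategy differs substantially from the paper's and has a real gap at the very first step. You assert that a cycle confined to $D\cup\Gamma$ exists for $a$ in a one-sided neighbourhood of the Hopf value $a^{*}$, but nothing in the hypotheses guarantees this. The Poincar\'e--Bendixson theorem produces a cycle from an outward-spiralling trajectory only if that trajectory \emph{remains in $D\cup\Gamma$}, and $D\cup\Gamma$ is nowhere assumed forward-invariant; trajectories near the repelling focus may simply exit through $\Gamma$. Invoking the Hopf theorem instead would supply a small cycle, but that theorem needs transversality of $a\mapsto\tr J(\be_a)$ at $a^{*}$ and a nonzero first Lyapunov coefficient, neither of which is assumed here (and $\be_{a^{*}}$ is not even assumed to lie in the open set $D$). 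Without this seed cycle your continuation never starts. The continuation itself is also incomplete: you correctly flag that saddle--node bifurcations of cycles break monotonicity in $a$, but following a ``connected component of the continuum'' through such folds requires a global bifurcation theorem of Alexander--Yorke type, which you have not invoked, and even then one must rule out the component forming a closed isola or bouncing indefinitely between degenerate Hopf points.

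The paper avoids all of this by tracking a different object. Call a solution \emph{directed} if $\bx(0)\in\Gamma$ and $\bx(t)\in\bar D$ for all $t>0$. At the sink endpoint $a_{-}$ a directed solution exists: follow any trajectory converging to $\be_{a_{-}}$ backward in time; Poincar\'e--Bendixson together with the no-cycle hypothesis forces it to meet $\Gamma$. At the source endpoint $a_{+}$ no directed solution can exist, since its $\omega$-limit in $\bar D$ would have to be a cycle. Let $a_{0}$ be the supremum of parameters admitting a directed solution; by passing to the limit a directed solution $\bx_{0}$ exists at $a_{0}$, and by construction $\bx_{0}(0)\in\Gamma$. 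If $\bx_{0}$ is periodic the proposition is proved. If not, one revolution of $\bx_{0}$ (either around a limit cycle in $D$, or around $\be_{a_{0}}$ when the latter is a linear center) together with a short transversal segment bounds a forward-invariant ``trap'' $D_{0}\subset D$; this trap, and hence a directed solution, persists for $a$ slightly above $a_{0}$, contradicting the choice of $a_{0}$. The remaining subcases ($\be_{a_{0}}$ a genuine sink or source) are disposed of directly. The whole argument uses only Poincar\'e--Bendixson and continuous dependence on parameters; no bifurcation theory and no non-degeneracy hypotheses are needed.
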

Of course, the gist of this statement is in the last three words:
{\em ``\ldots which touches $\Gamma$''.} This proposition is a
corollary of the Poincar\'e-Bendixson theorem, see the next section
for a proof.

We present below four simple examples to demonstrate the role of
Proposition \ref{pr1} in analysis of periodic two-dimensional
canards.

\paragraph*{Example 1.}
Consider the system
\begin{equation}\label{en}
\dot x=y, \qquad
 \eps\dot y=-x+F(y+a)
\end{equation}
with a small positive $\ve.$  Suppose that $F(0)=0$, $F'(y)<0$ for
$y<0$ and $F'(y)<0$ for $y>0.$ The curve $x=F(y)$ is a slow curve of
system \eqref{en} for $a=0.$ The branch $x=F(y),$ $y<0,$ is the
attractive part of the slow curve, and the branch $x=F(y),$ $y>0,$
is the repulsive part. The origin is the turning point. Periodic
canards are periodic solutions of system \eqref{en} which follow for
a substantial distance the repulsive branch, see Figure \ref{fig0}.
We say that at $a=0$ system \eqref{en} has a periodic canard of
magnitude $\alpha>0$, if to any small $\ve>0$ one can correspond
$a_{\ve}$ and a periodic solution $(x_{\ve, a_{\ve}}(t), y_{\ve,
a_{\ve}}(t))$ of the system $\dot x=y, \ \eps\dot
y=-x+F(y+a_{\ve}),$ such that:
 \begin{equation}\label{ek}
\max \{x_{\ve,a_{\ve}}(t):y_{\ve,a_{\ve}}(t)=0\}= \alpha.
 \end{equation}
 \begin{figure}[htb]
\begin{center}
\includegraphics*[width=9cm]{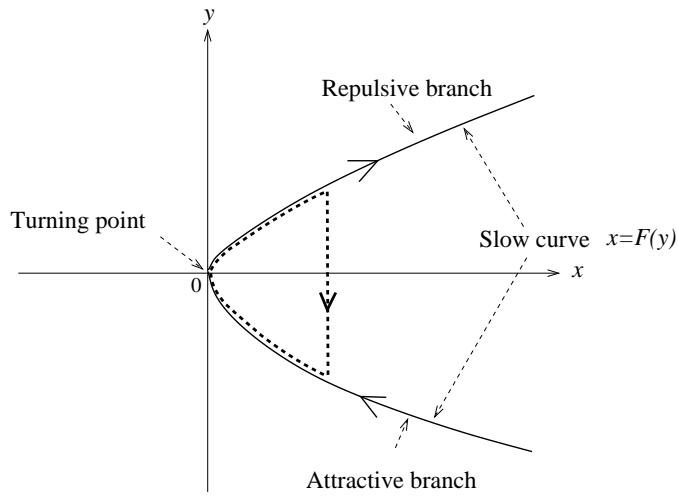}
\end{center}
\caption{Attractive and repulsive branches of the slow curve, and an
example of a periodic canard (dotted line).  } \label{fig0}
\end{figure}
In our case a periodic solution may visit the upper half-plane $y>0$
only traveling along the  repulsive branch of the slow curve. Thus,
this definition is consistent with the informal explanation given
above.

\begin{proposition}\label{pr2}
There exists a periodic canard of system \eqref{en} of any given
magnitude $\alpha>0$.
\end{proposition}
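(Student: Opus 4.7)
The plan is to apply Proposition~\ref{pr1} to the one-parameter family \eqref{en} for a fixed small $\ve > 0$, with $a$ playing the role of the bifurcation parameter. Write the system as $\dot\bx = \bff(\bx, a)$ with $\bff(\bx, a) = (y, \ve^{-1}(-x + F(y+a)))$. The unique equilibrium in the plane is $\be_a = (F(a), 0)$ and a direct computation gives
\[
\det J(\be_a) = \ve^{-1} > 0, \qquad \tr J(\be_a) = F'(a)/\ve.
\]
Choosing $a_- < 0 < a_+$ sufficiently small, one has $F'(a_-) < 0$ and $F'(a_+) > 0$, so both \eqref{det} and \eqref{tr} hold.

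Next I design the Jordan curve $\Gamma = \Gamma_\alpha$. Its right-hand portion is the vertical segment $S_v = \{(\alpha, y) \colon \abs{y} \le \beta\}$, with $\beta > 0$ chosen large enough to overhang the $y$-range of any relevant trajectory. The remaining part $\Gamma_{\mathrm{aux}}$ closes $\Gamma$ to the left of $S_v$, encloses every equilibrium $\be_a$ for $a \in [a_-, a_+]$, and is built (piecewise) so that $\bff(\cdot, a)$ is transverse to it for every such $a$. With this construction the only tangency points of $\bff(\cdot, a)$ with $\Gamma$ lie on $S_v$; since tangency to a vertical segment forces $\dot x = y = 0$, the only such point is $(\alpha, 0)$.

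It remains to rule out confined cycles at the endpoints $a = a_\pm$. At $a = a_-$ the equilibrium is a stable focus/node, and by a Bendixson--Dulac or energy-type argument (using the standard fact that for this van der Pol-type family limit cycles are born only as $a$ crosses $0$) there are no periodic orbits in $D \cup \Gamma$. At $a = a_+$, chosen past the canard-explosion interval, the unique periodic orbit is a full relaxation oscillation whose $x$-extent, determined by the slow curve $x = F(y + a_+)$, exceeds $\alpha$; hence it exits $D \cup \Gamma$ through $S_v$ and so is not confined. Proposition~\ref{pr1} then supplies some $a_\ve \in (a_-, a_+)$ and a cycle $C_\ve \subset D \cup \Gamma$ that touches $\Gamma$; by the tangency analysis above this touch must be at $(\alpha, 0)$. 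Because $C_\ve \subset \{x \le \alpha\}$ and the point $(\alpha, 0) \in C_\ve$ lies on the line $y = 0$, the equality \eqref{ek} holds, which is the required periodic canard of magnitude $\alpha$.

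The principal technical obstacles to be settled in a complete proof are the explicit construction of $\Gamma_{\mathrm{aux}}$ guaranteeing uniform-in-$a$ transversality of $\bff(\cdot, a)$ -- this exploits the global geometry of the slow curve together with the fast-direction structure of the field on the portions of $\Gamma_{\mathrm{aux}}$ where an unwanted tangency could a priori occur -- and the rigorous exclusion at $a_+$ of the small unstable Hopf cycles that might otherwise remain confined in $D \cup \Gamma$. The latter is handled by placing $a_+$ beyond the (exponentially narrow in $\ve$) canard-explosion interval, where the surviving cycle is a full relaxation oscillation whose $x$-amplitude exceeds any prescribed $\alpha$.
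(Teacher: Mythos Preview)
Your overall strategy---apply Proposition~\ref{pr1} with a carefully designed curve $\Gamma$ so that the only admissible tangency is at $(\alpha,0)$---is the paper's strategy, and your Jacobian computation is correct. But the two places where you defer the real work are precisely where the argument lives, and in one of them your sketch is actually wrong for this example.

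The serious gap is the no-cycles argument at $a=a_{+}$. You invoke ``past the canard-explosion interval the unique periodic orbit is a full relaxation oscillation whose $x$-extent exceeds $\alpha$''. In Example~1 the nullcline $x=F(y)$ has a single minimum at the origin (it is U-shaped, not S-shaped), so there is no second attractive branch and hence \emph{no relaxation oscillation at all}; the canard-explosion picture you are importing belongs to Example~2 with bimodal $F$. The paper instead disposes of $a=a_{+}$ by the time-reversal of the $a=a_{-}$ argument: it builds an explicit forward-invariant subdomain $D_{*}\subset D$ (bounded by a horizontal line, a vertical line, and a piece of a shifted slow curve) in which the divergence $F'(y+a_{-})/\ve$ is strictly negative, shows every cycle confined in $D$ must enter and hence live in $D_{*}$, and concludes by area contraction; then at $a_{+}$ it runs the same construction in backward time. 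Your ``Bendixson--Dulac or energy-type'' remark at $a_{-}$ is also too loose as stated, since the divergence changes sign across $y=-a_{-}$; the trapping-region step is not optional.

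A secondary point: the paper does not use a vertical right wall. It takes a large quadrangle $Q$ extending to $x=\beta>\alpha$ and excises the triangle $\Delta(\alpha,\beta)=\{\alpha\le x\le\beta,\ |y|\le x-\alpha\}$, so that on every boundary segment except the single vertex $(\alpha,0)$ one end of the velocity vector points strictly outward. This makes the transversality check a finite list of elementary sign computations rather than an unspecified construction of $\Gamma_{\mathrm{aux}}$.
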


\proof
%
Note that for any value of $a$ the only equilibrium is given by
\begin{equation}\label{equ}
\be_{a}=(F(a),0).
\end{equation}
Thus
$$
 J(\be_{a})= \left(
 \begin{array}{cc}
 0& 1\\-1/\ve& F'(a)/\ve
 \end{array}
 \right),
$$
and the inequalities
\begin{equation}\label{re}
\det J(\be_{a})=1/\ve>0,\quad \tr J(\be_{a})=F'(a)/\ve <(>)0\
\mbox{\ \rm for\ } a<(>)0
\end{equation} follow.

We choose as the boundaries $a_{-}<0<a_{+}$ any numbers satisfying
\begin{equation}\label{betaE}
-\alpha+F(a_{-})<0 \mbox{\quad \rm and\quad}  -\alpha+F(a_{+})<0.
\end{equation}

\begin{figure}[ht]
\begin{center}
\includegraphics*[width=9cm]{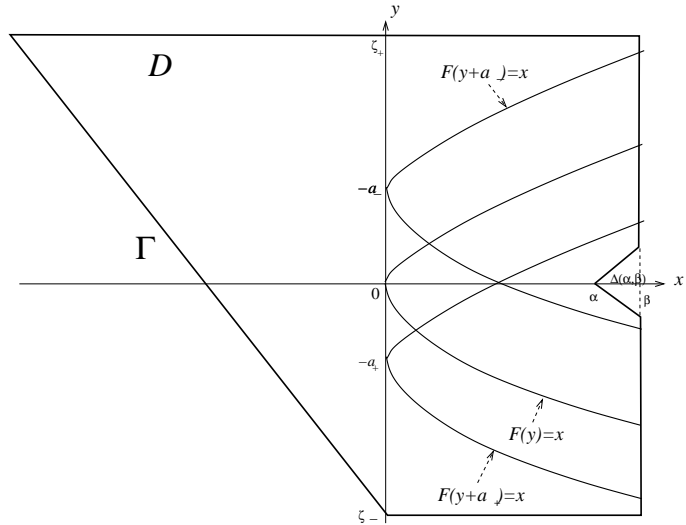}
\end{center}
\caption{The domain $D$ is bounded by the curve $\Gamma$. The
triangle $\Delta(\alpha,\beta)$ belongs to the area where
$-x+F(y+a)<0$ for all $a\in[a_{-},a_{+}].$ The domain $D$ contains
 all slow curves $x=F(y+a),$ $x\in [0, \beta],$
$a\in[a_{-},a_{+}]$. } \label{fig1}
\end{figure}
 Let us describe the domain $D,$ see Figure \ref{fig1}. We choose a number
 $\beta>\alpha$ such that the triangle
$$ 
\Delta(\alpha,\beta)=\{(x,y): \alpha\le x\le \beta,|y|\le x-\alpha
\} $$ 
belongs to the area where $-x+F(y+a)<0$ for $a_{-}\le a\le a_{+}.$
In particular,
\begin{equation}\label{new1}
-x+F(y+a)<0 \quad \mbox{\rm for}\quad  (x,y)\in
\Delta(\alpha,\beta), \ a\in[a_{-},a_{+}].
\end{equation}
Existence of such $\beta$ follows from \eqref{betaE}. We choose also
 the numbers $\zeta_{-}<0<\zeta_{+}$ satisfying
\begin{equation}\label{new2}
-x+F(\zeta_{\pm}+a)>0 \quad \mbox{\rm for}\quad  x<\beta, \
a_{-}<a<a_{+}.
\end{equation}
Consider the open quadrangle $Q$ which is bounded from south and
from north by the lines $y=\zeta_{-}$ and $y=\zeta_{+},$ bounded
from east by the line $x=\beta$, and bounded from south-west by the
line $x+y=\zeta_{-}.$ Denote by $D$ the open set $Q\setminus
\Delta(\alpha,\beta)$, and denote by $\Gamma$ the boundary of $D$.


Note that the equilibria $\be_{a},$ $a\in[a_{-},a_{+}],$ belong to
$D\bigcup \Gamma$ by \eqref{equ}; thus $a_{-}<0<a_{+}$ and
\eqref{re} guarantee that \eqref{det} and \eqref{tr} hold.
To apply Proposition \ref{pr1} it remains to show that there are no
cycles confined in $D\bigcup \Gamma$ for $a\in\{a_{-},a_{+}\}.$

\begin{figure}[ht]
\begin{center}
\includegraphics*[width=10cm]{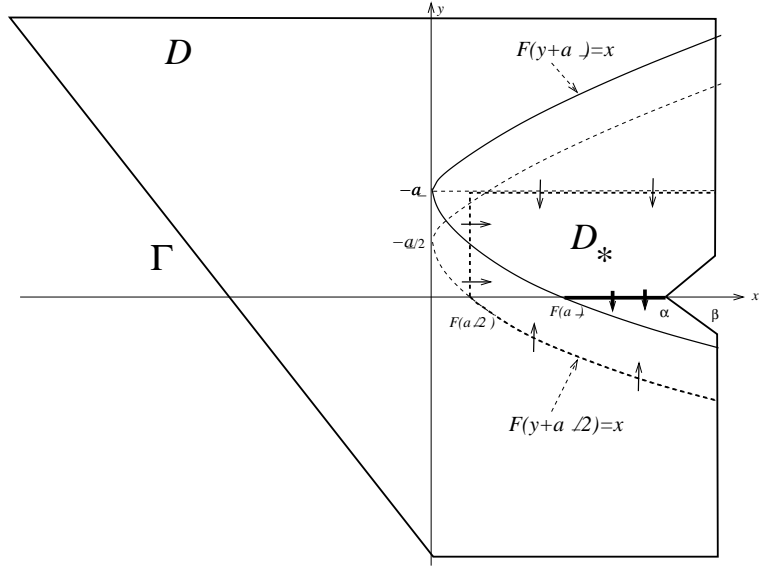}
\end{center}
\caption{The sub-domain $D_{*}\subset D$ is bounded by the bold
dashed line. A trajectory which is confined in $D$ cannot leave the
domain $D_{*}$, once it entered $D_{*}$. Each periodic solution
$\bx_{*}(t)=(x_{*}(t),y_{*}(t))$ which is confined in $D$ must visit
$D_{*}$, because it must cross the bold segment of the axis $y=0$.
The areas are shrinking within $D_{*}$, and thus there are no cycles
there.} \label{figa2}
\end{figure}
Consider the case $a=a_{-}$. Introduce the auxiliary sub-domain
$D_{*}\subset D$ which is bounded from north by the line
$y+a_{-}=0$, from west by the line $x=F(a_{-}/2)$ and from
south-east by the graph of the function $-x+F(y+a_{-}/2)=0,$ see
Figure \ref{figa2}.
{\em For small $\ve$ a trajectory
$\bx_{\ve}(t)=(x_{\ve}(t),y_{\ve}(t))$ which is confined in $D$
cannot leave the domain $D_{*}$, once it entered $D_{*}$}. To prove
this claim, we note that for the small $\ve$ the velocity vectors
$\dot\bx$ point inward at the part of the boundary of $D_{*}$ which
belongs to $D$. Indeed, at the west boundary
we have $\dot{x}=y>0$; at the north boundary
the inequality $\dot{y}=(-x+F(y+a_{-})/\ve<0$ holds, and at the
south-west boundary the velocity vectors point almost vertically up
for small $\ve$.
 {\em Moreover, each periodic solution
$\bx_{*}(t)=(x_{*}(t),y_{*}(t))$ which is confined in $D$ must visit
$D_{*}.$} Indeed, because $\dot x=y$, the solution $\bx_{*}(t)$ must
visit both half-plane $y<0$ and half-plane $y>0$. Thus $\bx_{*}(t)$
must cross some times the axis $y=0$  from above, i.e., for
$x>x_{a_{-}}=F(a_{-}/2)$; it remains to note that the whole interval
$$\{(x,0): F(a_{-}/2)\le x< \alpha\}$$ belongs to $D_{*}.$

By the italicized parts of the previous paragraph, each cycle which
is confined in $D$ must be confined in $D_{*}.$ However, within
$D_{*}$ the inequality $\tr J(x,y)=F'(y+a_{-})/\ve<0$ holds, the
areas are shrinking, and therefore there are no cycles there. The
case $a=a_{-}$ is completed, and the case $a=a_{+}$ can be
considered analogously in the backward time.

\begin{figure}[ht]
\begin{center}
\includegraphics*[width=10cm]{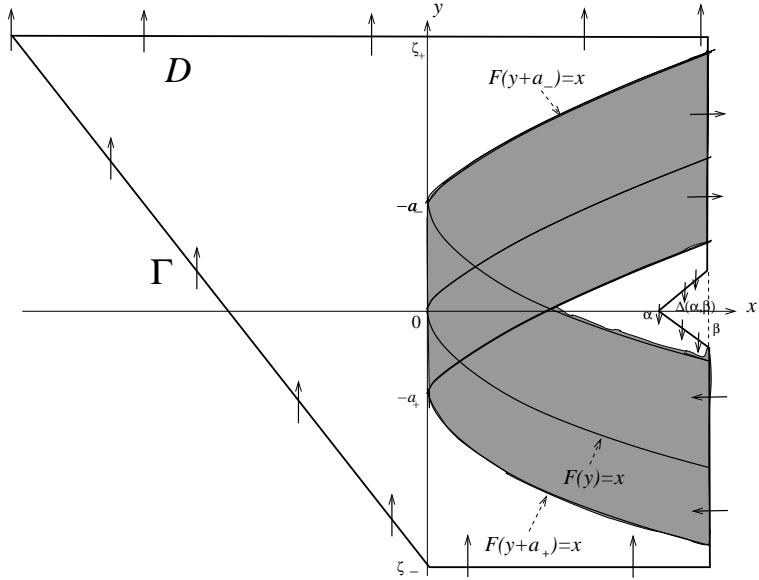}
\end{center}
\caption{A periodic orbit which is confined in $D\bigcup\Gamma$ may
touch $\Gamma$ only at the point $(\alpha,0),$ because at all other
points of $\Gamma$ at least one end of the velocity vector points
strictly outward $\Gamma$.} \label{fig3}
\end{figure}
Thus, by Proposition \ref{pr1}, for any small $\ve>0$ there exists a
periodic solution $(x_{\ve, a_{\ve}}(t), y_{\ve, a_{\ve}}(t))$ whose
trajectory is confined in $D\bigcup \Gamma$ and touches $\Gamma$. On
the other hand, a periodic orbit which is confined in
$D\bigcup\Gamma$ may touch $\Gamma$ only at the point $(\alpha,0),$
see Figure \ref{fig3}: at any other point at least one end of the
velocity vector points strictly outward $\Gamma$. (At the north,
south and south-west parts of the boundary this is due to almost
upward orientation of $\dot\bx$ for small $\ve,$ see \eqref{new1};
at the sides of the triangle $\Delta(\alpha,\beta)$ (apart of the
point $(0,\alpha)$)
--- due to almost downward orientation of $\dot\bx,$ see \eqref{new2}; and at
the vertical fragments of the east boundary --- due to  $\dot{x}=y
\not=0$.) Thus, the family $(x_{\ve, a_{\ve}}(t), y_{\ve,
a_{\ve}}(t))$ is a periodic canard of the required magnitude
$\alpha$, and the proof is completed.

 \begin{figure}[ht]
\begin{center}
\includegraphics*[width=9cm]{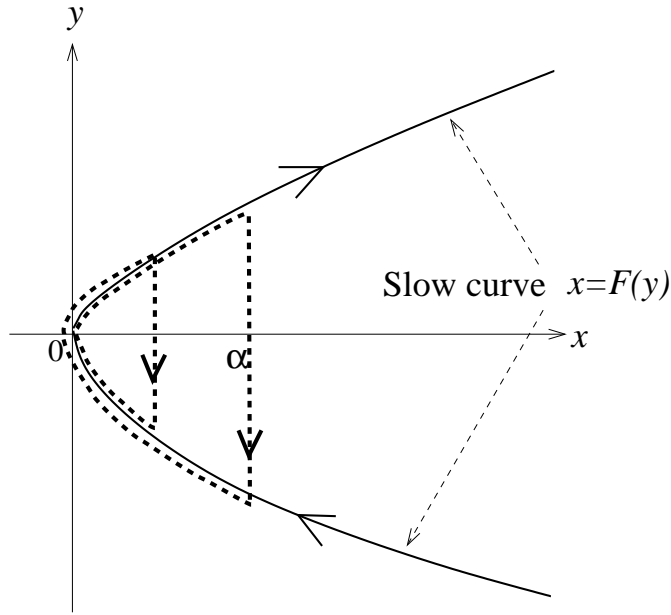}
\end{center}
\caption{Attractive and repulsive branches of the slow curve, and an
example of a periodic canard (dotted line).  } \label{fig0A}
\end{figure}
Statements similar to Proposition \ref{pr2} provide no information
about asymptotic of $a_{\ve}$, and on stability of canards. Still
they could be useful in applications: the canards which existence is
known can be further located and stabilized via a suitable feedback
in a usual way. Note also that we do not guarantee that the a canard
of the magnitude $\alpha$ has only one jump point per period the
structure of a canard may be trickier, see Figure \ref{fig0A}.

\paragraph*{Example 2.}
Consider system \eqref{en} with a bimodal function $f.$ Suppose that
$F(0)=0$, $F'(y)<0$ for $y<0$ and for $y>\mu>0$, whereas $F'(y)>0$
for $0<y<\mu.$  The curve $x=F(y)$ is a slow curve of system
\eqref{en} for $a=0.$ In particular, the branches $x=F(y),$ $y<0,$
and $x=F(y),$ $y>\mu,$ are the attractive parts of the slow curve,
and the branch  $x=F(y),$ $0<y<\mu,$ is the repulsive part. The
origin and the point $(F(\mu),\mu)$ are the turning points. This
modification of the first example is similar to the classical
Lienard equation.

\begin{figure}[ht]
\begin{center}
\includegraphics*[width=8cm]{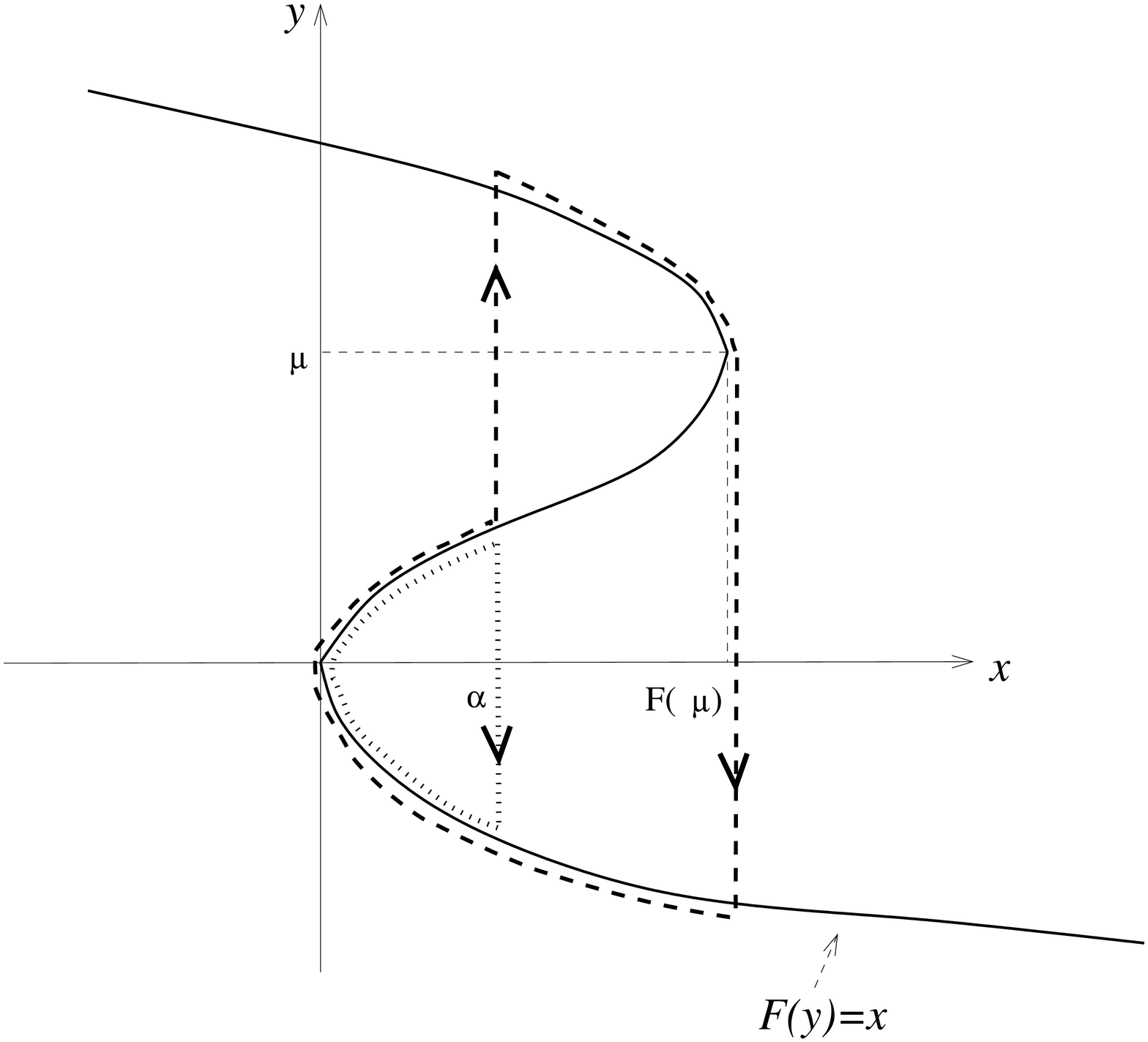}
\end{center}
\caption {Early canard (dotted) and late canard (dashed for a
bi-modal function $F(x)$.} \label{bm}
\end{figure}
Periodic canards of a magnitude $\alpha$ may exist only for $0<
\alpha\le F(\mu)$. Moreover, there are  two possible structures of a
canard: solution may jump down, or jump up from the unstable part of
the slow curve. We will use wordings early and late canards
correspondingly. Let us give formal definitions.

We say that at $a=0$ system \eqref{en} has an early periodic canard
of magnitude $\alpha>0$, if the relationship \eqref{ek} holds, and
we say that the system has a late periodic canard of magnitude
$\alpha>0$ if instead
 \begin{equation}\label{lk}
\min \{x_{\ve,a_{\ve}}(t): y_{\ve,a_{\ve}}(t)=\mu\}= \alpha.
 \end{equation}

\begin{figure}[ht]
\begin{center}
\begin{tabular}{cc}
\includegraphics*[width=6cm]{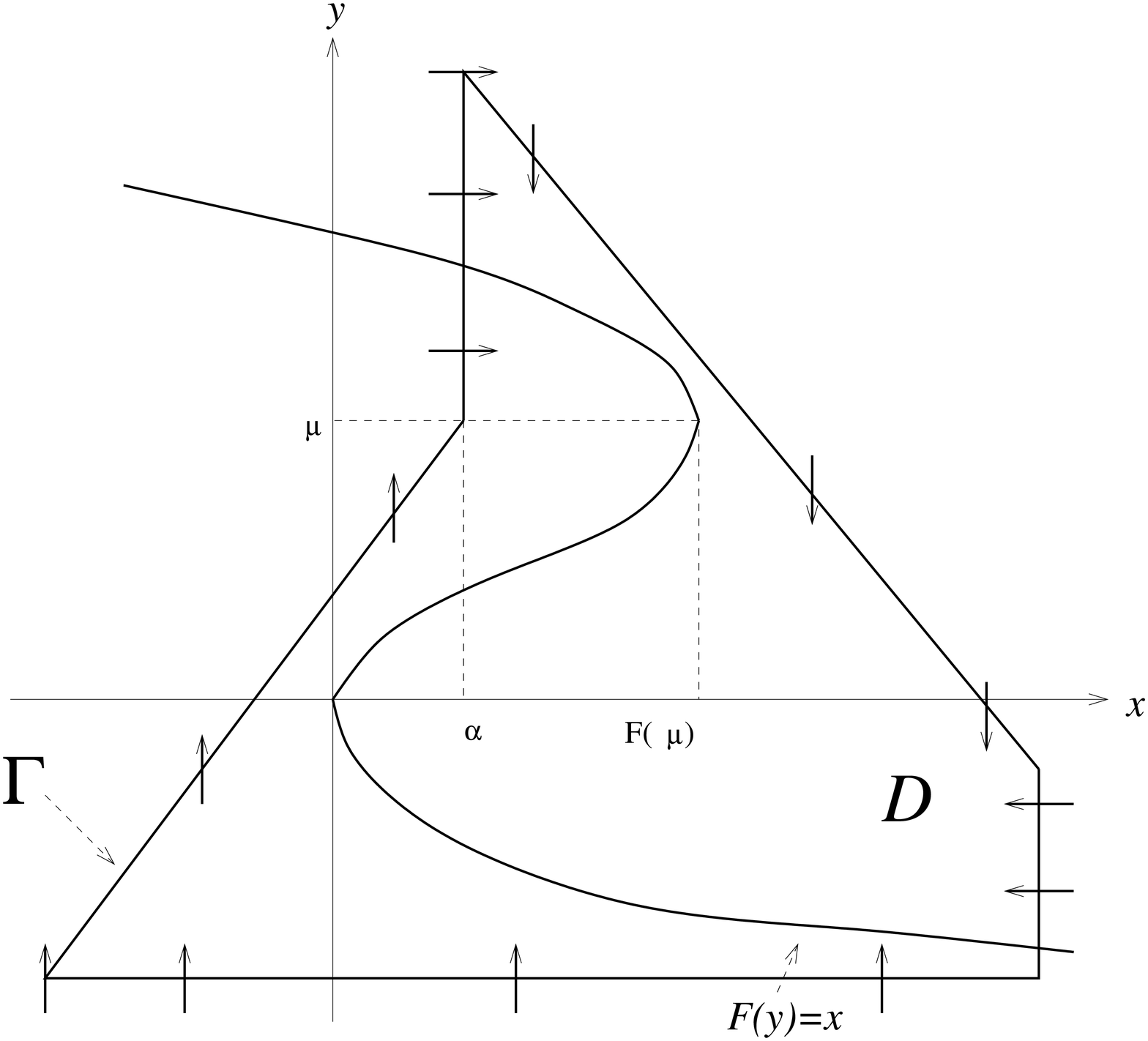} &
\includegraphics*[width=6cm]{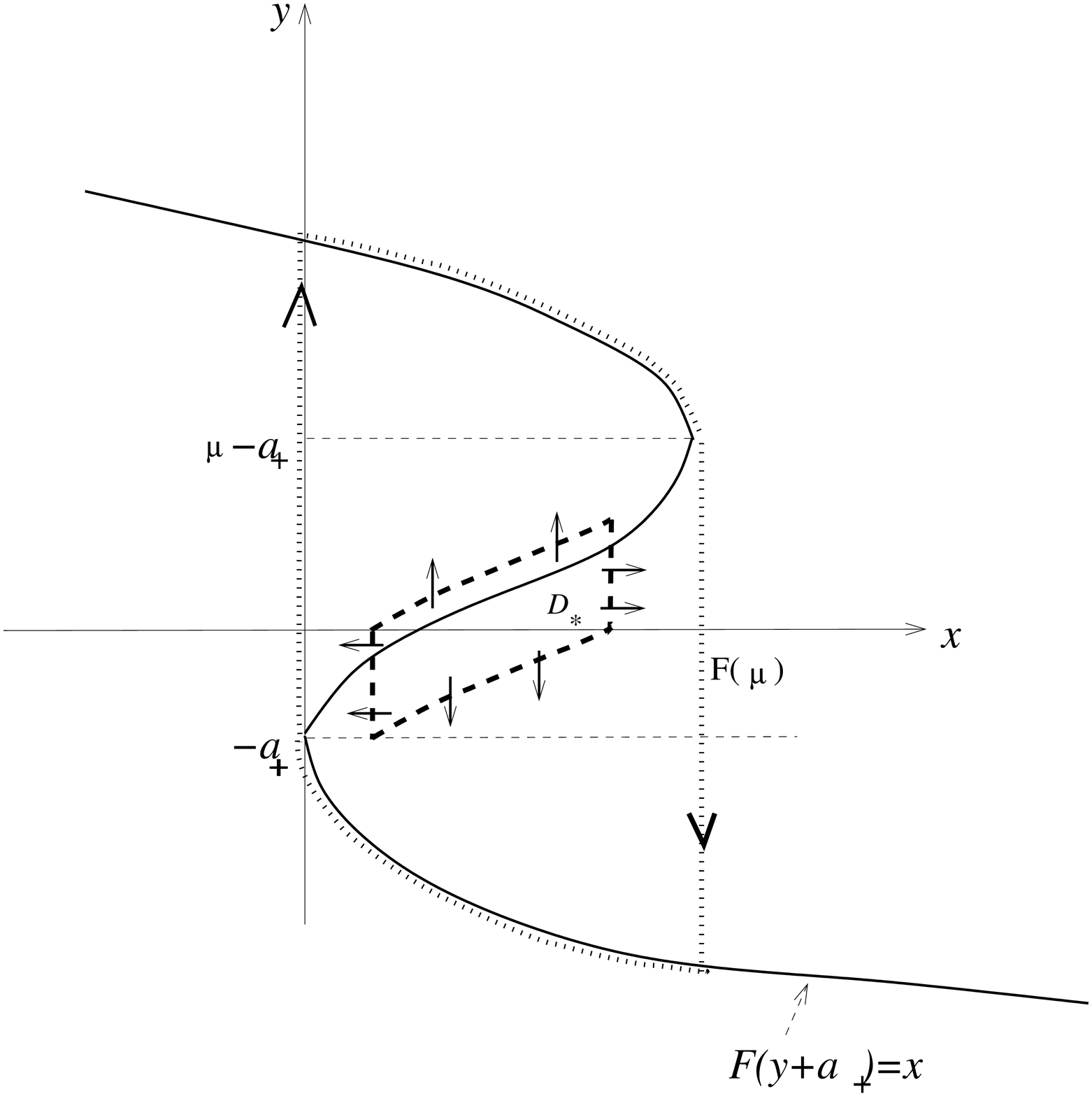}\\
a)&b)
\end{tabular}
\end{center}
\caption{a)  Schematic image of a suitable region $D$ bounded by the
curve $\Gamma$. \newline b) To prove non-existence of cycles at
$a=a_{+}$ we consider region $D_{*}$. This region is a repeller, and
it doesn't embrace any cycle, since the areas are growing within.
Thus any cycle should cross the horizontal axis to the right of
$D_{*}$, and for small $\ve$ must be close to the relaxation cycle
(dotted). However for small $a_{+}$ this relaxation cycle does not
belong to the region $D$.\label{bm2}}
\end{figure}
\begin{proposition}\label{pr3}
There exists an early and a late periodic canards of any given
magnitude $\alpha\in (0,F(\mu)).$
\end{proposition}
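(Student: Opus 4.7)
The plan is to apply Proposition~\ref{pr1} twice, once for the early canard and once for the late one, each time following the template of the proof of Proposition~\ref{pr2}. The unique equilibrium of \eqref{en} for any parameter value is still $\be_a=(F(a),0)$, with $\det J(\be_a)=1/\ve>0$ and $\tr J(\be_a)=F'(a)/\ve$. Bimodality of $F'$ guarantees that the trace changes sign at $a=0$, so suitable intervals $[a_-,a_+]$ on which \eqref{det}--\eqref{tr} hold are again easy to exhibit.

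For the early periodic canard of magnitude $\alpha\in(0,F(\mu))$ the proof of Proposition~\ref{pr2} carries over almost word for word. One picks $a_-<0<a_+$ small so that $-\alpha+F(a_\pm)<0$ and so that the repulsive portions of the shifted slow curves $x=F(y+a)$, $y+a\in(0,\mu)$, remain inside the prospective quadrangle $Q$. The domain $D=Q\setminus\Delta(\alpha,\beta)$ is built exactly as in Example~1, with the additional requirement $\alpha<\beta<F(\mu)$ so that the excised triangle stays strictly below the upper turning point. The absence of cycles confined in $D\cup\Gamma$ at $a=a_\pm$ follows from the shrinking-area argument on the sub-domain $D_\ast$ of Example~1, because $D_\ast$ lies entirely in the region where $F'(y+a_\pm)<0$; bimodality plays no role there. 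Proposition~\ref{pr1} then produces a cycle touching $\Gamma$, and the flux analysis at the end of the proof of Proposition~\ref{pr2} forces the touching point to be $(\alpha,0)$, yielding the early canard.

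For the late periodic canard the target touching point is $(\alpha,\mu)$ instead of $(\alpha,0)$, so the domain has to be reshaped to straddle the upper turning region while still containing the lower equilibrium $\be_a$. I would enlarge $Q$ vertically so that its northern boundary lies above $y=\mu$, and replace $\Delta(\alpha,\beta)$ by an inward-pointing wedge $\Delta'(\alpha,\beta)$ excised around the upper turning point; the shape is chosen so that $\Gamma$ has a single reentrant corner at $(\alpha,\mu)$ and so that on the two sides of this wedge the sign of $-x+F(y+a)$ forces the fast component of $\dot\bx$ strictly outward, mirroring \eqref{new1}--\eqref{new2}. Taking again $a_-<0<a_+$ small, non-existence of cycles at $a=a_\pm$ is obtained by combining the lower shrinking-area argument of Example~1 with an analogous shrinking-area argument in the upper strip $y>\mu$ (where once more $F'(y+a_\pm)<0$); any would-be large cycle must cross the line $y=\mu$ at some $x\ge F(\mu)>\alpha$ and is therefore blocked by the excised wedge. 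Proposition~\ref{pr1} then supplies a cycle touching $\Gamma$, and the flux check identifies the touching point as $(\alpha,\mu)$, which is condition \eqref{lk}.

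The main obstacle is the geometric bookkeeping in the late-canard construction: one has to verify that at every point of $\Gamma$ different from $(\alpha,\mu)$ at least one end of the velocity vector $\dot\bx$ points strictly outward for small $\ve$, and to do so separately in the regions $y<0$, $0<y<\mu$ and $y>\mu$, keeping track of the two possible signs of $-x+F(y+a)$ on either side of the repulsive branch. Once these transversality checks are complete, the two applications of Proposition~\ref{pr1} deliver the early and late canards of magnitude $\alpha$, and Proposition~\ref{pr3} is proved.
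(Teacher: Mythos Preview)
Your overall plan---applying Proposition~\ref{pr1} with a suitably shaped domain $D$, once for the early and once for the late canard---is the same as the paper's. The equilibrium computation and the early-canard domain construction are also essentially the ones in the text.

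There is, however, a real gap in your non-existence argument at $a=a_{+}$. You claim that ``the absence of cycles confined in $D\cup\Gamma$ at $a=a_{\pm}$ follows from the shrinking-area argument on the sub-domain $D_{*}$ of Example~1, because $D_{*}$ lies entirely in the region where $F'(y+a_{\pm})<0$; bimodality plays no role there.'' This is false for $a=a_{+}$. For small $a_{+}>0$ the equilibrium $\be_{a_{+}}=(F(a_{+}),0)$ has $F'(a_{+})>0$, so near the equilibrium the divergence is \emph{positive}: areas grow, not shrink. In Example~1 this is harmless because the growing-area region $\{y+a_{+}>0\}$ is a half-plane and (after reversing time) the Example~1 trapping construction still forces any confined cycle into that half-plane. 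In the bimodal case the growing-area region is only the strip $\{-a_{+}<y<\mu-a_{+}\}$; a cycle can escape upward into the attractive strip $y>\mu-a_{+}$ and return via the upper fold, so neither the forward-time nor the backward-time divergence argument by itself rules out cycles. Bimodality plays exactly this role.

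The paper's treatment of $a=a_{+}$ is different from yours and is what you are missing: one first shows that a neighbourhood $D_{*}$ of the source is a \emph{repeller} (areas grow there, so it contains no cycle), hence any cycle confined in $D$ must cross the axis $y=0$ to the right of $D_{*}$. For small $\ve$ such a large cycle is forced to be close to the relaxation oscillation of the bimodal problem; but for small $a_{+}$ this relaxation cycle does not fit inside the chosen $D$ (it intersects the excised wedge / the boundary segments built around the fold), and one concludes that there is no confined cycle at $a=a_{+}$. You need this two-step argument (or an equivalent one) rather than a direct shrinking-area estimate.
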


Proof is similar to that of Proposition \ref{pr1}. As $a_{\pm}$ one
can chose any small numbers satisfying $a_{-}<0<a_{+}$; the
inequalities \eqref{det},\eqref{tr} are evident. A possible
construction of the region $D$ is given in Figure \ref{bm2}a.
Non-existence of cycles at $a=a_{-}$ may be proven as before. For
non-existence of cycles at $a=a_{+}$ see Figure \ref{bm2}b.

\paragraph*{Example 3.}
\begin{figure}[ht]
\begin{center}
\begin{tabular}{cc}
\includegraphics*[width=6cm]{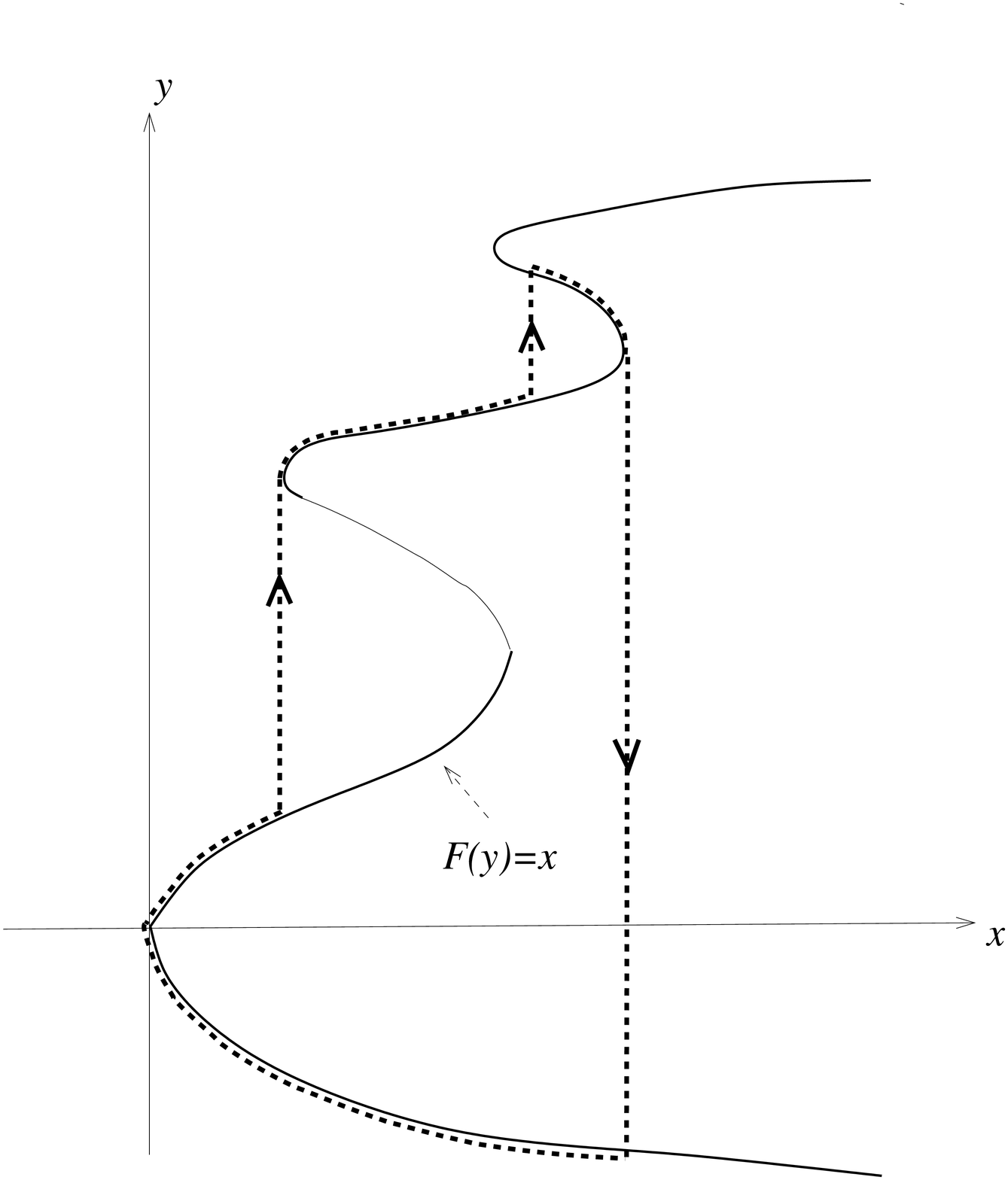}&
\includegraphics*[width=6cm]{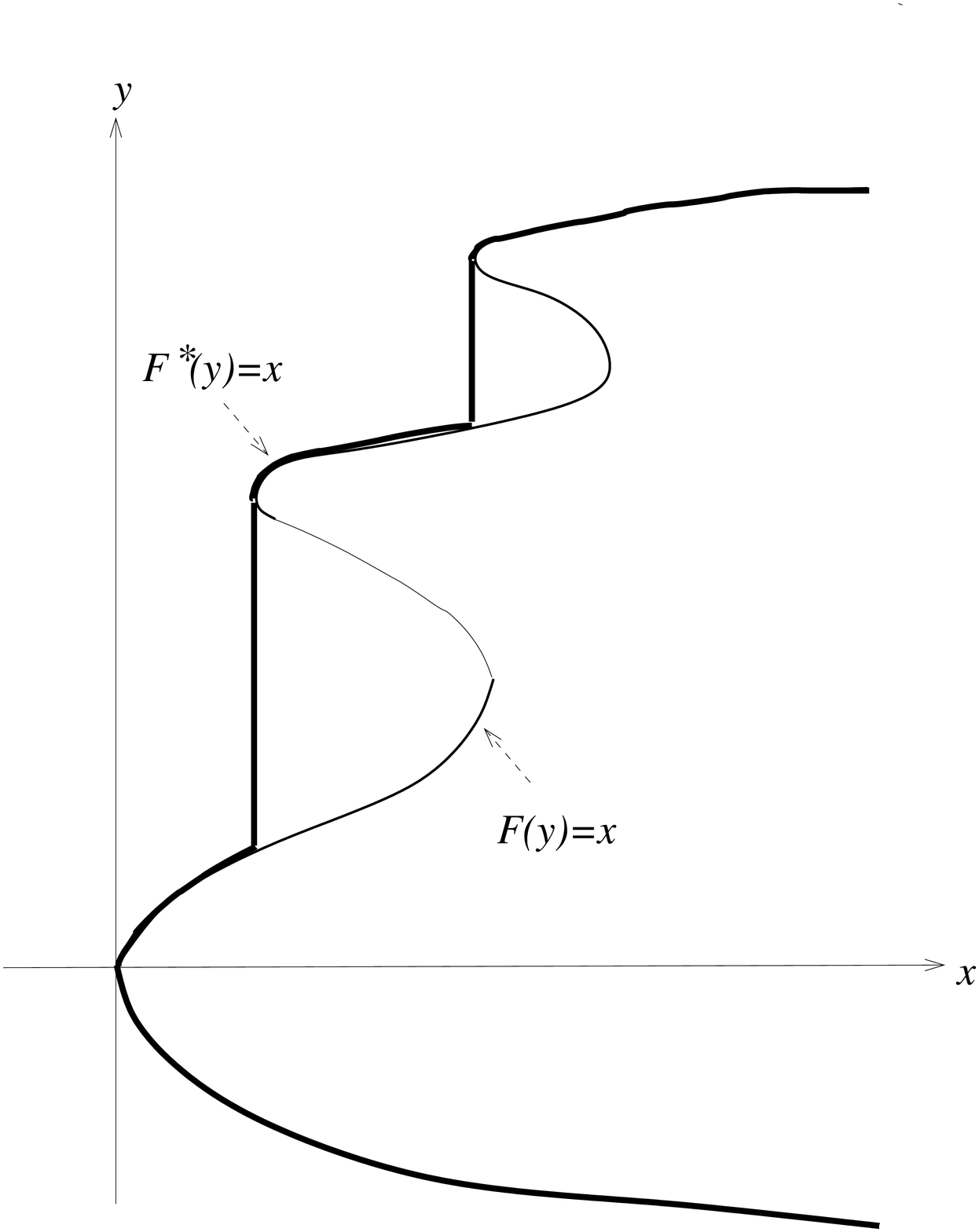}\\
a)& b)
\end{tabular}
\end{center}
\caption{a) An example of the ``super-late'' canard for a multi-mode
function $F(y)$.\newline b) The line $x=F^{*}(y)$ (bold) versus the
line $x=F(y)$. Early $(x_{0},y_{0})$-canards exist for any point
$(x_{0},y_{0})$ located to the right of the line $x=F(y)$; late
 $(x_{0},y_{0})$-canards exist for any point
$(x_{0},y_{0})$ located strictly between the lines  $x=F(y)$ and
$x=F^{*}(y).$} \label{mm}
\end{figure}
As the next example we consider system \eqref{en} where $F(y)$ is a
continuous piece-wise monotone function which satisfies the
following conditions
\begin{equation}\label{mme}
F(0)=0, \quad F(y)>0, \ y\not= 0, \quad
\lim_{y\to\pm\infty}F(y)=\pm\infty.
\end{equation}
We also suppose that al local extrema
 of this function are pairwise
different.

For a given $x_{0}>F(y_{0})$ we say that that at $a=0$ system
\eqref{en} has an early $(x_{0},y_{0})$-periodic canard, if to any
small $\ve>0$ one can correspond $a_{\ve}$ and a periodic solution
$(x_{\ve, a_{\ve}}(t), y_{\ve, a_{\ve}}(t))$ of the system $\dot
x=y, \ \eps\dot y=-x+F(y+a_{\ve}),$ such that
\begin{equation}\label{ekm}
\max \{x_{\ve,a_{\ve}}(t):y_{\ve,a_{\ve}}(t)=y_{0}\}= x_{0}.
\end{equation}
The late canards for the case $x_{0}<F(y_{0})$ are defined
analogously, with the difference that \eqref{ekm} is swapped by
\begin{equation}\label{lk2}
\min \{x_{\ve,a_{\ve}}(t): y_{\ve,a_{\ve}}(t)=y_{0}\}= x_{0}.
\end{equation}

Introduce the auxiliary function
$$
F^{*}= \left\{\begin{array}{rl} \min_{z\ge y} F(z) & y\ge 0,\\[2mm]
\max_{z\le y} F(z) & y< 0.
\end{array}
\right.
$$
\begin{proposition}\label{pr5}
There exists an early $(x_{0},y_{0})$-periodic canard for any
$x_{0}>F(y_{0}$, and there exists a late $(x_{0},y_{0})$-periodic
canard for any $F(y_{0})<x_{0}<F^{*}(y_{0}).$
\end{proposition}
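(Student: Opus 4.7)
The plan is to reduce Proposition \ref{pr5} to Proposition \ref{pr1} by following the templates of Propositions \ref{pr2} and \ref{pr3}. As before, the unique equilibrium of \eqref{en} is $\be_a = (F(a),0)$ with $\det J(\be_a)=1/\eps>0$ and $\tr J(\be_a) = F'(a)/\eps$, so it suffices to pick $a_- < 0 < a_+$ close to $0$ with $F'(a_-)\cdot F'(a_+)<0$. Since the local extrema of $F$ are pairwise distinct, $F$ is strictly monotone on each side of $0$ in a punctured neighbourhood, and such $a_\pm$ can always be chosen; conditions \eqref{det} and \eqref{tr} then hold automatically.

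For the early case $x_0>F(y_0)$ I would copy the construction of Example~1 essentially verbatim: enclose the equilibria $\be_a$ and the relevant piece of the slow curve $x=F(y+a)$, $a\in[a_-,a_+]$, in a large quadrangle $Q$, and excise a narrow triangular spike $\Delta$ whose tip is $(x_0,y_0)$, oriented into the half-space $\{x>F(y+a)\}$. By continuity and the strict inequality $x_0>F(y_0)$, the spike $\Delta$ may be chosen thin enough that $-x+F(y+a)<0$ throughout $\Delta$ for all $a\in[a_-,a_+]$. On the resulting boundary $\Gamma=\partial(Q\setminus\Delta)$ the velocity vector points strictly outward at every point except the tip, so any cycle confined to $\bar D$ that touches $\Gamma$ must touch it precisely at $(x_0,y_0)$, realising \eqref{ekm}.

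For the late case, the same template applies with the spike oriented into $\{x<F(y+a)\}$. The auxiliary function $F^*$ enters precisely here: $x_0$ lying strictly between $F(y_0)$ and $F^*(y_0)$ means that the ray emanating from $(x_0,y_0)$ in the direction of growing $|y|$ does not meet the graph $x=F(y)$, and by continuity it does not meet $x=F(y+a)$ for $|a|$ small. Hence the spike can be drawn without intersecting any branch of the slow curve, and on it one has $-x+F(y+a)>0$. The argument from Example~2 (cf.\ Fig.\ \ref{bm2}) then shows that again $(x_0,y_0)$ is the only admissible touch point on $\Gamma$, yielding \eqref{lk2}.

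The main obstacle, and the technical core of the argument, is verifying the no-cycle hypothesis of Proposition \ref{pr1} at $a=a_\pm$ for a piecewise-monotone $F$. Following Examples~1 and~2, I would construct an auxiliary trapping subregion $D_\ast\subset D$ bounded by the nearest decreasing branch of $x=F(y+a_\pm)$, the horizontal line $y+a_\pm=0$, and a suitable vertical segment, so that (i)~inside $D_\ast$ the divergence $F'(y+a_\pm)/\eps$ has a definite sign and Bendixson's criterion rules out cycles, and (ii)~since $\dot x = y$, every orbit confined to $D$ must cross $\{y=0\}$ through a segment contained in $D_\ast$, and the boundary of $D_\ast$ is one-way (as in Fig.\ \ref{figa2}) so the orbit is trapped inside. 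The multi-mode structure of $F$ is handled by the fact that the other monotone branches lie outside $D_\ast$ and are separated from it by regions of very fast vertical flow for small $\eps$, so they cannot participate in any confined cycle. With the no-cycle hypothesis established, Proposition \ref{pr1} supplies a cycle touching $\Gamma$, and by the construction above the touch point must be $(x_0,y_0)$, completing the proof.
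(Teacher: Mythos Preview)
Your approach is exactly what the paper does: its entire proof of Proposition~\ref{pr5} is the single sentence ``The proof combines the proofs of two previous propositions.'' Your elaboration---choosing $a_\pm$ near $0$ via the sign change of $F'$, building $D$ as a large quadrangle minus a spike with tip $(x_0,y_0)$, using the $D_\ast$/Bendixson argument to exclude cycles at $a=a_\pm$, and invoking Proposition~\ref{pr1}---is precisely the intended combination of the templates from Propositions~\ref{pr2} and~\ref{pr3}, and your geometric reading of $F^\ast$ (guaranteeing the spike avoids the slow curve) is the correct one.
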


The proof combines the proofs of two previous propositions.

\paragraph*{Example 4.}
As the last example we consider the system
\begin{equation} \dot{x}=F(x,y)=x(p-f(y)), \qquad
\label{gs} \varepsilon\dot{y}=G(x,y,a)=y(-q+x(r+g(y)-a h(y))).
\end{equation}
Here $p,q,r>0$ are given numbers, $f(0)=g(0)=h(0)=0$,
$f'(y),g'(y),h'(y)>0$ for $y\ge 0$, $\varepsilon$ is small, and $a$
is a parameter. Suppose also that
\begin{equation}\label{limE}
\lim_{y\to\infty}g(y)/h(y)=0.
\end{equation}
System \eqref{gs} has been recently used in population dynamics.
Loosely speaking, the functions $r+g(y)$ and $h(y)$ describe
facilitation and competition between predators respectively. The
equation \eqref{limE} means that the competition prevails for denser
populations of predators. An instructive example of the functions
$g(y),h(y)$ is given by
\begin{equation}\label{primer}
g(y)=\alpha_{1}y+\alpha_{2}y^{2}+\ldots +\alpha_{m}y^{m}, \qquad
h(y)=\beta_{1}y^{m+1}+\beta_{2}y^{m+2}+\ldots +\beta_{n}y^{m+n},
\end{equation}
where all coefficients are non-negative, and at least on
$\alpha_{i}$ and at least one $\beta_{j}$ is strictly positive.
Loosely speaking, $\alpha_{i}$ measure intensity of mutual
facilitation between $i+1$ predators, whereas $\beta_{j}$ measure
intensity of competition between $m+i+1$ predators. Another similar
example is given by
\begin{equation}\label{primer2}
g(y)=\int_{0}^{M}v(\alpha)\; d\alpha,
 \qquad
 h(y)=
 \int_{M}^{N}w(\alpha)\; d\alpha
,
\end{equation}
where the weight functions $v(\alpha),w(\alpha)$ are positive and
bounded, and $0<M<N.$

The system of equation to find ``canard-susceptible'' triplets
$(x_{*},y_{*},a_{*})$ is
$$
F(x,y,a)=0,\quad G(x,y,a)=0, \quad G'_{y}(x,y,a)=0.
$$
In the positive quadrant $x,y>0$ this can be rewritten as
$$
f(y)=a, \quad x(r+g(y)-ah(y))=q, \quad g'(y)=ah'(y).
$$
Since $f(0)=0$ and $f'(y)>0$ for $y\ge 0$, there exists a unique
$y_{*}>0$ which satisfies $f(y)=a$; thus
 $ a_{*}=g'(y_{*})/h'(y_{*}),$ and
 $ x_{*}= q/(r+g(y_{*})-a_{*}h(y_{*})).$
We suppose that $x_{*}$ is positive, that is that the inequality
$$
r+g(y_{*})>a_{*}h(y_{*})
$$
holds.

In the positive quadrant the slow curve is given by
\begin{equation}\label{slow}
x=X(y)=q/(r+g(y)-a_{*}h(y)), \quad 0<y<\eta,
\end{equation}
To avoid non-principal complications we suppose that the function
$g'(y)/h'(y),$ strictly decreases for $y>0;$
this is always true in the case  \eqref{primer} or \eqref{primer2}.
(For instance, in the case \eqref{primer} we rewrite $g'(y)/h'(y)$
as $g_{1}(y)/h_{1}(y)$ with $g_{1}(y)=g'(y)/y^{m},
h_{1}(y)=h'(y)/y^{m}$; then $g_{1}(y)$ strictly decreases,
$h_{1}(y)$ strictly increases, and the the fraction
$g'(y)/h'(y)=g_{1}(y)/h_{1}(y)$ strictly decreases as required.)
  Then, in particular, the function  $r+g(y)-a_{*}h(y)$ is unimodal,
  and there exist the single positive root $\eta$ of the equation
$r+g(y)-a_{*}h(y)=0.$ Therefore the function \eqref{slow} is
  unimodal for $0<y<\eta$.
 The branch $x=X(y),$  $0<y<y_{*},$
 is repulsive, the branch $x=X,$ $y_{*}<y<\eta,$
is attractive, and $(x_{*},y_{*})$ is the unique turning point.

\begin{figure}[ht]
\begin{center}
\includegraphics*[width=10cm]{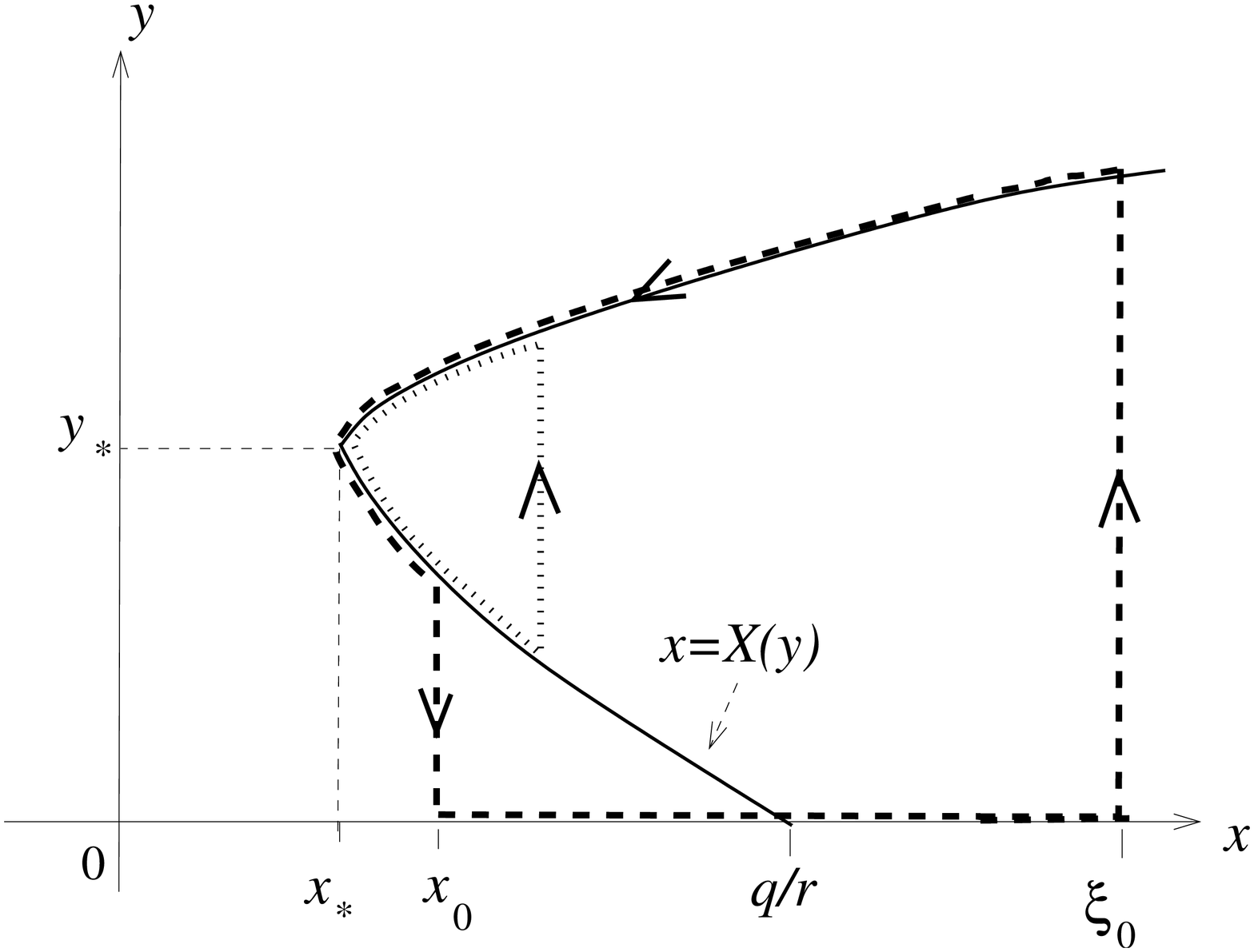} 
\end{center}
\caption{Early and late canards for the modified Lotka-Vilterra
system. Early canards may exist for $x_{0},y_{0}>0$ satisfying
 $X(y_{0})<x_{0}<X(0),$ and they have the standard structure.
 Late canards, which may exist for $x_{0},y_{0}>0$ satisfying
$0<y_{0}<y_{*},$ $x_{*}<x_{0}<X(y_{0}),$ are more interesting. A
late $(x_{0},y_{0})$-canard exhibits additional delayed loss of
stability phenomenon: after following down at $x\approx x_{0}$ it
follows closely the axis $y=0$ until the point $\xi_{0}$ and then
jumps up to the attractive branch of the slow manifold. 
} \label{l-v}
\end{figure}
For $a\sim a_{*}$ the system may  have two types of canards, see
Figure \ref{l-v}. Early canards may exist for $x_{0},y_{0}>0$
satisfying
 $X(y_{0})<x_{0}<X(0),$ and they have the standard structure.
 Late canards, which may exist for $x_{0},y_{0}>0$ satisfying
$0<y_{0}<y_{*},$ $x_{*}<x_{0}<X(y_{0}),$ are more interesting. A
late $(x_{0},y_{0})$-canard exhibits additional delayed loss of
stability phenomenon: after following down at $x\approx x_{0}$ it
follows closely the axis $y=0$ until a point $x\approx \xi_{0}>q/r,$
and then jumps up to the attractive branch of the slow manifold. The
point $\xi_{0}$  is the solution of the equation
$\xi^{q}\exp(-r\xi)=x_{0}^{q}\exp(-rx_{0}).$ Indeed, close to the
axis $y=0$ the dynamics is governed by the equation
$dy/dx=y(q-rx)/(px)$ whose solutions satisfy the relationship $\ln
y^{p}- \ln x^{q} +rx=const.$


\begin{proposition}\label{pr6}
There exists an early $(x_{0},y_{0})$-periodic canard for any
$x_{0},y_{0}>0$ satisfying
 $X(y_{0})<x_{0}<X(0),$  and there exists a late
$(x_{0},y_{0})$-periodic canard for any $x_{0},y_{0}>0$ satisfying
$0<y_{0}<y_{*},$ $x_{*}<x_{0}<X(y_{0}).$
\end{proposition}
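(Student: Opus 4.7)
The plan is to apply Proposition~\ref{pr1} with $a$ varying over an interval $[a_{-},a_{+}]$ straddling $a_{*}$, exactly as in the previous three examples, the new ingredient being the explicit conservation law $\Psi(x,y):=p\ln y-q\ln x+rx=\mathrm{const}$ that governs the delayed-loss-of-stability phenomenon of late canards near the axis $y=0$. The positive equilibrium $\be_{a}=(x_{a},y_{*})$ persists for $a$ near $a_{*}$, with $f(y_{*})=p$ and $x_{a}=q/(r+g(y_{*})-ah(y_{*}))$. A direct computation gives
\[
 \det J(\be_{a})=\frac{q\,y_{*}\,f'(y_{*})}{\varepsilon}>0,\qquad
 \tr J(\be_{a})=\frac{x_{a}\,y_{*}\bigl(g'(y_{*})-a\,h'(y_{*})\bigr)}{\varepsilon},
\]
so \eqref{det} holds automatically and, because $h'(y_{*})>0$, the trace is strictly monotone in $a$ and vanishes exactly at $a=a_{*}$, so \eqref{tr} is secured as soon as $a_{-}<a_{*}<a_{+}$.

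For an early $(x_{0},y_{0})$-canard (where $X(y_{0})<x_{0}<X(0)$) the construction of the Jordan region $D$ parallels Propositions~\ref{pr2} and~\ref{pr5}. I would take $\Gamma$ to be the boundary of a quadrangle-like region $D$ that (i)~contains every equilibrium $\be_{a}$ for $a\in[a_{-},a_{+}]$ together with the attractive branch $\{(X(y),y):y_{*}<y<\eta\}$, (ii)~has a wedge attached at the single admissible tangency point $(x_{0},y_{0})$ (of the same shape as the triangle $\Delta(\alpha,\beta)$ of Example~1) placed inside the region where $F$ and $G$ have the signs needed to force $\dot\bx$ inward, and (iii)~has every other boundary segment arranged so that $\dot\bx$ points strictly inward for all sufficiently small $\varepsilon$ and all $a\in[a_{-},a_{+}]$. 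Non-existence of cycles in $\overline{D}$ at $a=a_{\pm}$ is then obtained, as in Example~1, by a Dulac--Bendixson argument on a trapping (respectively, repelling) sub-domain in which $\tr J$ has constant sign; Proposition~\ref{pr1} supplies a value $a_{\varepsilon}\in(a_{-},a_{+})$ and a cycle tangent to $\Gamma$, and by the construction of $\Gamma$ the only possible tangency point is $(x_{0},y_{0})$.

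The late case is the main obstacle. A late $(x_{0},y_{0})$-canard leaves the repulsive branch at $(x_{0},y_{0})$, drops to a thin strip $\{0<y<\delta\}$, drifts to the right along this strip until $x\approx\xi_{0}$ determined by $\xi_{0}^{q}e^{-r\xi_{0}}=x_{0}^{q}e^{-rx_{0}}$, and only then jumps back up to the attractive branch, so $D$ must embrace this entire loop. The delicacy is in designing the south and south-east portion of $\Gamma$ so that (a)~$\dot\bx$ is strictly inward on each of its segments apart from the single tangency at $(x_{0},y_{0})$, and (b)~no cycle fits in $\overline{D}$ for $a=a_{\pm}$. The key observation is that inside the strip the leading-order dynamics is $dy/dx=y(q-rx)/(px)$, whose orbits are the level sets of $\Psi$; choosing the south and south-east boundary of $\Gamma$ to be arcs of two suitably selected $\Psi$-level sets yields both properties at once, because the jump-up abscissa of the relaxation cycle of \eqref{gs} depends continuously on $a$ and equals $\xi_{0}$ at $a=a_{*}$, so for $a_{\pm}$ close enough to $a_{*}$ this abscissa is pushed outside $D$. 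The most delicate step will be the quantitative control of this $a$-dependence of the jump-up abscissa, which is extracted by matching the slow-manifold flow to the $\Psi$-level-set flow inside the strip; once this is established, Proposition~\ref{pr1} closes the proof exactly as in Propositions~\ref{pr2} and~\ref{pr5}.
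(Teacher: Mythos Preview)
Your framework is the same as the paper's: apply Proposition~\ref{pr1} with $a_{-}<a_{*}<a_{+}$, verify \eqref{det}--\eqref{tr} via the Jacobian at $\be_{a}$, and for early canards copy the construction of Examples~1--3. One slip: you repeatedly ask that $\dot\bx$ point \emph{inward} on the boundary segments. That is not what is needed, and in fact it is false for the region in Example~1 (on the sides of the excised wedge $\Delta(\alpha,\beta)$ the velocity points \emph{out} of $D$, into the wedge). The correct requirement, as in Figure~\ref{fig3}, is that at every boundary point except the target $(x_{0},y_{0})$ \emph{at least one end} of the velocity vector points strictly outward, i.e.\ the field is transverse to $\Gamma$; this is what prevents a confined periodic orbit from touching $\Gamma$ there.

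For the late canards your plan genuinely diverges from the paper and overcomplicates matters. You propose to build the south/south-east boundary out of arcs of $\Psi$-level sets and then to control how the jump-up abscissa of the relaxation cycle moves with $a$; you correctly flag this quantitative $a$-dependence as the delicate step. The paper avoids this entirely. Its $\Gamma$ (Figure~\ref{l-v_D}) is made of simple segments and has \emph{two} exceptional points where the field is tangent: the target $(x_{0},y_{0})$ and one point on the eastern wall. The second point is disposed of by a single uniform estimate: any cycle confined in $D\cup\Gamma$ that drifts along the strip near $y=0$ must drop down at some $x\ge x_{*}$ (since $x_{*}=\min X$), and by the entry--exit relation $\xi^{q}e^{-r\xi}=x^{q}e^{-rx}$ the drift therefore terminates no later than $\xi_{*}$, the root of $\xi_{*}^{q}e^{-r\xi_{*}}=x_{*}^{q}e^{-rx_{*}}$. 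Placing the eastern wall of $D$ strictly to the right of $\xi_{*}$ makes that second tangency unreachable for \emph{every} $a\in[a_{-},a_{+}]$, so no $a$-dependence of the jump abscissa is needed at all. Non-existence of confined cycles at $a=a_{\pm}$ then follows exactly as in the earlier examples. In short: your ``most delicate step'' is unnecessary---the drift bound is uniform in $a$, and choosing $D$ wide enough on the east is all that is required.
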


\begin{figure}[ht]
\begin{center}
\includegraphics*[width=10cm]{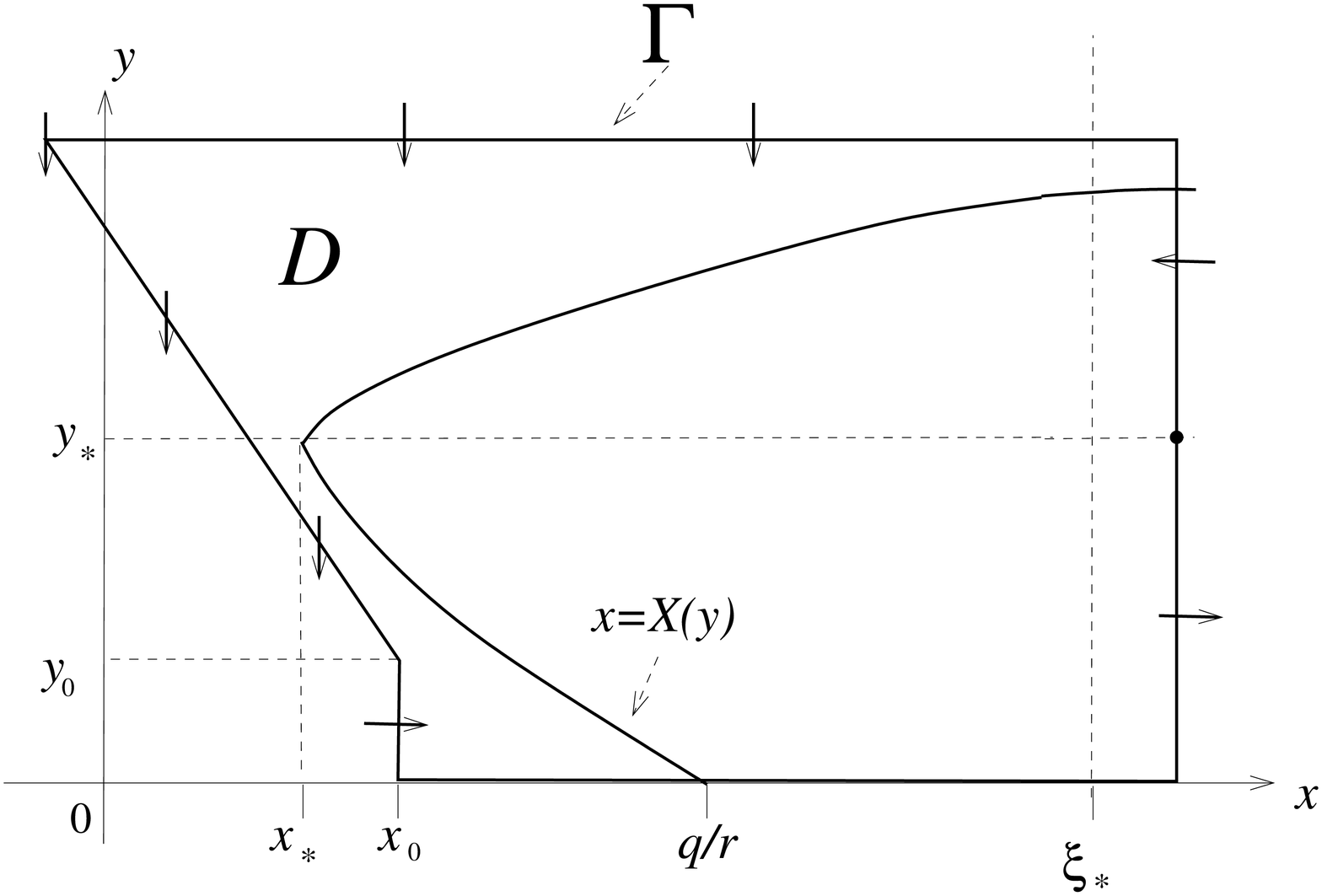} 
\end{center}
\caption{The region $D$ bounded by the curve $\Gamma$. At last one
endpoint of the velocity vector points strictly outward of $D$ at
all points of $\Gamma,$ except of two points: one is our ``target
point'' $(x_{0},y_{0})$, and another is the bold point at the
eastern bound of $D$. However, there are no cycles which touch
$\Gamma$ at the second point, since the longest possible travel
along the axis $y=0$ ends at the point $2q/r-x_{*},$ which is
located strictly to left of the eastern bound of $D$. Thus, by
Proposition \ref{pr1}, there exists a cycle which is confined in
$D\bigcup \Gamma$, and touches $\Gamma$ at $(x_{0},y_{0}).$}
\label{l-v_D}
\end{figure}

\proof As $a_{\pm}$ we choose any numbers which are sufficiently
close to $a_{*}$ and satisfy $a_{-}<a_{*}<a_{+}$. Note that
$$
 J(\be_{a})= \left(
 \begin{array}{cc}
 0& -x_{a}f'(y_{*})
  \\y_{*}(g(y_{*})-ah(y_{*}))/\ve&
   x_{a}y_{*}(g'(y_{*})-ah'(y_{*}))/\ve
 \end{array}
 \right)
$$
and the inequalities \eqref{det} and \eqref{tr} follow. The
construction of the region $D$ in the case of an early canard is the
same as in the first example, and in the case of the late canard is
explained in Figure \ref{l-v_D}. Here $\xi_{*}$ denotes the unique
positive solution of the equation
$\xi^{q}\exp(-r\xi)=x_{*}^{q}\exp(-rx_{*}).$
 Nonexistence of confined in
$D\bigcup \Gamma$ cycles for $a=a_{-},a_{+}$ can be proven as in the
previous examples.


\section*{Proof of Proposition \protect\ref{pr1}} The
Poincare-Bendixson theorem can be stated in several ways. The
statement that is relevant to the equation \eqref{e1} is the
following. {\em Suppose $S$ is a closed, bounded subset of the
plane; $S$ does not contain any fixed points; and there exists a
trajectory confined in $S$. Then either this trajectory is a closed
orbit, or it spirals toward a closed orbit.}

For a particular value of $a$ a solution $\bx(t)$ of \eqref{e1} is
called directed, if $\bx(0)\in\Gamma$ and $\bx(t)\in \bar D$ for
$t>0$. {\em There exists a directed solution $\bx(t)$ for
$a=a_{+},a_{-}$.} To prove this claim we suppose that $\tr
J_{a_{-}}<0,$ and consider a solution which begins in a sufficiently
small vicinity of $\be_{a_{-}}.$ Then $|\by(t)-\be_{a_{-}}|\to 0$ as
$t\to \infty$, and  $|\by(t)|,$ is bounded from below at $t\le 0$
(because $\be_{a_{-}}$ is a sink due to $\det J_{\be_{a_{-}}},\tr
J_{\be_{a_{-}}}<0$). By the Poincare-Bendixson theorem $\by(t)$ must
leave $D$ in negative time (because there is no cycles at
$a=a_{-}$); in particular, $\by(t)$ touches $\Gamma$ for the first
time at some $t=\tau<0$. It remains to set $\bx(t)=\by(t+\tau)$.
Analogously, using the backward time, we prove that  {\em there are
no $\Gamma$-directed solutions at $a=a_{+}$}.

Denote by $a_{0}\in[a_{-},a_{+})$ the upper bound of
$a\in[a_{-},a_{+})$ for which there exist some directed solutions.
For $a=a_{0}$ a directed solution $\bx_{0}(t)$ also exists by limit
transition. If $\bx_{0}(\cdot)$  is periodic, then the proposition
holds. To finalize the proof {\em we suppose that $\bx(\cdot)$ is
not periodic, and arrive at contradiction.}

By the Poincare-Bendixson  theorem there are only two possibilities:
either {(a)} $\bx_{0}(t)$ spirales toward $C$ a cycle $C\subset D,$
or {(b)} $\bx_{0}(t_{n})\to \be_{a_{0}} $  for some
$t_{n}\to\infty$.

\begin{figure}[htb]
\begin{center}
\includegraphics*[width=10cm]{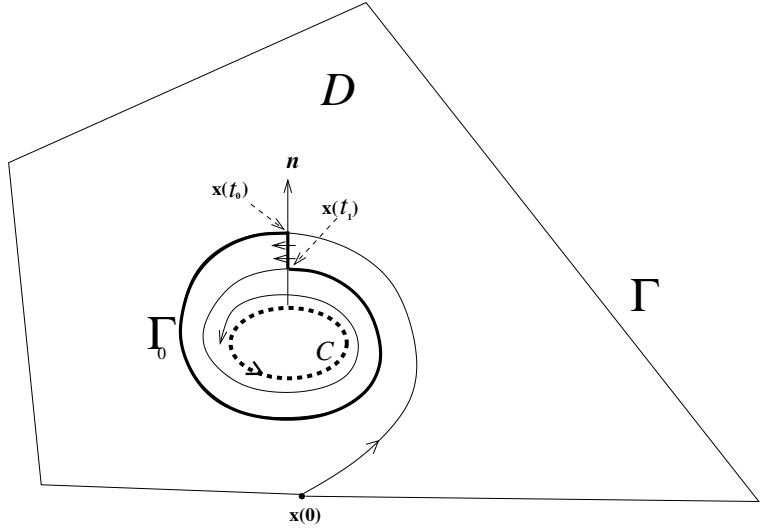}
\end{center}
\caption{The trapping curve $\Gamma_{0}.$} \label{fig4}
\end{figure}
Let $\Gamma_{0}\subset D$ be a Jordanian curve which bounds the open
domain $D_{0}$, and $\tau$ be a positive number. We say that the
pair $\{\Gamma_{0},\tau\}$ is {\em trapping} if simultaneously: the
set $D_{0}\bigcup\Gamma_{0}$ is forward invariant for the equation
$\dot\bx-\bff(\bx,a_{0}),$ and  $\bx(\tau)\in D_{0}$ holds for any
solution satisfying $\bx(0)\in\Gamma_{0}$. If a trapping pair
exists, then for $a$ slightly greater than $a_{0}$ the solutions of
equation \eqref{e1} that begins at $\bx_{0}(0)$ are also attracted
to arbitrary small vicinity of $D_{0}$. That is, there exist a
directed solutions at some $a>a_{0}$. Thus, to arrive at
contradiction it is enough to construct a trapping pair. To this end
in the case (a) we choose a point $\by\in C$ and consider the
corresponding outward normal $\bn$ to $C$. Let $\bar\lambda$
satisfies the relationships $[\by,\by+\bar\lambda\bn]\subset D,$ and
$\bff(\by+\lambda\bn,a_{0})\cdot\bff(\by,a_{0})>0,$ $0\le
\lambda\le\bar\lambda.$ By definition, the solution $\bx_{0}(t)$
crosses the segment $(\by,\by+\bar\lambda\bn]$ infinitely many
times, see Figure \ref{fig4}. Let $t_{0}$ and $t_{1}$ be two
successive moments of such crossings with the corresponding values
$\lambda_{0},\lambda_{2}$. Consider the curve $\Gamma_{0}$ which
consists of the trajectory $\bx_{0}(t),$ $t_{0}<t<t_{1}$, together
with the segment $[\bx_{0}(t_{0}),\bx_{0}(t_{1})]$. Since
$\bx_{0}(t)$ spirales toward $C$, the inequality
$\lambda_{0}>\lambda_{1}$ holds. Therefore,
$\{\Gamma_{0},t_{1}-t_{0}+1\}$ is a trapping pair, and we arrived at
contradiction in the case (a).

By $\tr J(\be_{a_{0}})<0$ the case (b) can be partitioned in turn
into three cases:
{(b1)} $\be_{a_{0}}$ is a source; {(b2)} $\be_{a_{0}}$ is a sink;
{(b3)} $\be_{a_{0}}$ is a center in the linear approximation.
In the case (b1) we immediately arrive at contradiction with the
condition (b). In the case (b2), for $a$ slightly greater than
$a_{0}$, the solution which begins at $\be_{a_{0}}$ is attracted to
a small vicinity of $\be_{a_{0}}$. Thus, there exist directed
solutions for some $a>a_{0}$, which contradicts the definition of
$a_{0}$. It remains to consider the case (b3), which is similar to
the case (a) above. Indeed, consider a segment
$\sigma=(\be_{a_{0}},\be_{a_{0}}+\bz]$ where $\bz$ is close enough
to $\be_{a_{0}}$ to guarantee that $\sigma\subset D$, and that
$\bff(\by,a_{0}),$ $\by\in \sigma,$ is not collinear to $\bz$. (This
can be done because $\be_{a_{0}}$ is a center in the linear
approximation.) By the condition (b)  the solution $\bx_{0}(t)$
crosses the segment $\sigma$ infinitely many times. Let $t_{0}$ and
$t_{1}$ be to successive moments of such crossings. Consider the
curve $\Gamma_{0}$ which consists of the trajectory $\bx_{0}(t),$
$t_{0}<t<t_{1}$, together with the segment
$[\bx_{0}(t_{0}),\bx_{0}(t_{1})]$. By construction the pair
$\{\Gamma_{0},t_{1}-t_{0}+1\}$ is a trapping pair, and we arrived at
contradiction in the case (b3). The proposition is proven.

\section{Non-smooth perturbations\label{last}}

Consider a perturbed system \eqref{en}:
 $$ 
\dot x=y, \qquad
 \eps\dot y=-x+F(y-a)+\tilde F(x,y,a),
 $$ 
where $\tilde F$ is continuous and small in the uniform norm:
$\sup|\tilde F(x,y,a)|<\delta\ll 1$,  but there is no bounds for on
its derivatives. In this case applicability of usual tools is
doubtful.

\begin{proposition}\label{pr2p}
There exist $\bar\ve,\bar\delta>0$ such that for $0<\ve<\bar\ve,
0<\delta<\bar\delta$ there exists a small $a_{\ve}$ and a periodic
canard$(x_{\ve}(t), y_{\ve}(t))$ of the system $\dot x=y, \ \eps\dot
y=-x+F(y-a_{\ve})+\tilde F(x,y,a_{\ve})$ which satisfies
$\max_{t}\{x_{\ve}(t)\}=b$. The trajectory of this canard approaches
$\Gamma(b)$ as $\ve,\delta\to 0$.
\end{proposition}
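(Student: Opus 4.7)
The plan is to reduce the non-smooth perturbed case to Proposition \ref{pr2} by smooth approximation, exploiting the fact that the trapping region $D$ built in the proof of Proposition \ref{pr2} is defined entirely by \emph{strict} sign conditions on the velocity field, and that these survive uniformly small continuous perturbations.

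First I would re-examine the domain $D$ and its boundary $\Gamma$ from the proof of Proposition \ref{pr2}. Apart from the single touching point $(b,0)$, all velocity-vector inequalities used there are strict and uniform in $a\in[a_-,a_+]$: they control the direction of $\dot\bx$ along $\Gamma$, the inward/outward flow on $\partial D_*$, and the sign of $\tr J$ inside $D_*$ that gives non-existence of cycles at $a=a_\pm$. Hence, after possibly shrinking $D$ slightly and choosing $\bar\ve,\bar\delta$ small enough, the entire geometric construction survives for any perturbation with $\sup|\tilde F|<\bar\delta$: the perturbed system has a unique equilibrium $O(\bar\delta)$-close to $(F(-a),0)$ in $D\cup\Gamma$, relationships \eqref{det} and \eqref{tr} persist, and no cycles are confined in $D\cup\Gamma$ at $a=a_\pm$.

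Because Proposition \ref{pr1} is stated for smooth $\bff$, I would then approximate $\tilde F$ uniformly by smooth $\tilde F_n$ with $\sup|\tilde F_n|<\bar\delta$ and $\sup|\tilde F_n-\tilde F|\to 0$. Applying the proof of Proposition \ref{pr2} verbatim to each smoothed system produces parameters $a_n\in[a_-,a_+]$ and periodic canards $(x_n(t),y_n(t))$ of magnitude $b$, confined in $D\cup\Gamma$ and touching $\Gamma$ only at $(b,0)$. Denote their minimal periods by $T_n$.

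Finally I would pass to the limit. Since $D$ is bounded and $\ve$ is fixed, both $(x_n,y_n)$ and their derivatives are uniformly bounded, so by Arzel\`a--Ascoli, a subsequence satisfies $a_n\to a_\ve$ and $(x_n,y_n)\to (x_\ve,y_\ve)$ uniformly on compact time intervals; by continuity of the right-hand side the limit solves the original perturbed system, remains in $D\cup\Gamma$, and still touches $\Gamma$ at $(b,0)$. The main obstacle is to pin down the periods: an upper bound on $T_n$ comes from a lower bound on the angular speed of the orbit around the equilibrium, valid outside any fixed neighbourhood of the equilibrium that the orbit cannot enter because it must reach $(b,0)\in\Gamma$; a lower bound comes from the orbit's perimeter being bounded below by twice the distance from the equilibrium to $\Gamma$, combined with a uniform upper bound on $|\dot\bx|$. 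Hence $T_n\to T^*\in(0,\infty)$ along a further subsequence, and $(x_\ve,y_\ve)$ is genuinely $T^*$-periodic. Convergence of the trajectory to the limiting canard curve as $\ve,\delta\to 0$ then follows from the singular-perturbation geometry of the unperturbed slow curve, which is unaffected by $\tilde F$ in the limit $\delta\to 0$.
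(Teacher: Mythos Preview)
Your approach differs from the paper's: you smooth the perturbation, apply the unperturbed result, and pass to the limit, whereas the paper extends Proposition~\ref{pr1} itself to equations $\dot\bx=\bff(\bx,a)+\tilde\bff(\bx,a)$ with $\tilde\bff$ merely continuous and uniformly small, keeping all hypotheses (equilibrium, \eqref{det}, \eqref{tr}, no cycles at $a_\pm$) on the smooth part $\bff$ only, and then re-runs the Poincar\'e--Bendixson/trapping-pair argument for the perturbed flow.

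Your route has a genuine gap. You assert that ``the sign of $\tr J$ inside $D_*$'' and ``relationships \eqref{det} and \eqref{tr}'' persist under a uniformly small continuous perturbation. They do not: $\tr J$ and $\det J$ involve \emph{derivatives} of the vector field, and a $C^0$-small $\tilde F$ (or any smooth approximation $\tilde F_n$ of it) may have arbitrarily large $\partial_y\tilde F_n$ and $\partial_x\tilde F_n$. Concretely, for the smoothed system the divergence in $D_*$ becomes $\bigl(F'(y-a)+\partial_y\tilde F_n\bigr)/\ve$, which need not be negative, so the Dulac-type argument ruling out cycles at $a=a_\pm$ collapses; likewise $\det J$ and the sink/source classification of the equilibrium may fail for the smoothed systems. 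Since these are exactly the hypotheses you need in order to invoke ``the proof of Proposition~\ref{pr2} verbatim'', the chain breaks before you ever get a sequence of periodic canards to which Arzel\`a--Ascoli could be applied.

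The paper sidesteps this by never asking the perturbed system to satisfy the Jacobian or divergence conditions. All such conditions are imposed on $\bff$ alone; the role of smallness of $\tilde\bff$ is only to preserve the strict velocity-direction inequalities that define trapping pairs and the boundary behaviour on $\Gamma$ and $\partial D_*$ --- precisely the $C^0$-robust ingredients. If you want to repair your argument without changing strategy, you would have to replace the Dulac step and the linear equilibrium classification by $C^0$-robust statements (for instance, a Lyapunov function near $\be_{a_\pm}$ built from the unperturbed linearisation, whose decrease along the perturbed flow is a value-of-field inequality), and then your smoothing-plus-limit scheme can proceed.
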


\noindent Proof follows from the following modification of
Proposition \ref{pr1}. Consider the equation
\begin{eqnarray}\label{e5}
\dot \bx=\bff(\bx, a)+\tilde\bff(\bx, a).
\end{eqnarray}
Here $\tilde\bff(\bx, a)$ is continuous and uniformly small:
 $\sup\tilde\bff(\bx,a)<\delta\ll 1$, but there is no
 restriction on its derivative. {\em Under conditions of Proposition
 \ref{pr1}
for some $a\in (a_{-}, a_{+})$ there exists a cycle of system
\eqref{e5} which belongs to $D\bigcup \Gamma$, and which touches
$\Gamma$.} Proof is essentially the same as of Proposition
\ref{pr1}.

\section{Acknowledgements}
The authirs are grateful to Professors F. DuMortier, V. Goldshtein,
R. O'Malley, N. Popovic, V. Sobolev for the useful and pleasant
discussions.

\end{document}